\date{\today}
\def\biblio{\bibliography{bibliography}\bibliographystyle{alpha}}
\newtheoremstyle{named}%
    {}{}{\itshape}{}{\bfseries}{.}{.5em}{\thmnote{#3}}
\theoremstyle{named}
\crefname{equation}{}{}
\crefname{introcor}{Corollary}{Corollary}
\Crefname{figure}{Figure}{Figures}
\theoremstyle{theorem}
\newtheorem{thm}{Theorem}[section]
\newtheorem{theorem}[thm]{Theorem}
\newtheorem*{thm*}{Theorem}
\newtheorem{cor}[thm]{Corollary}
\newtheorem{corollary}[thm]{Corollary}
\newtheorem{prop}[thm]{Proposition}
\newtheorem{proposition}[thm]{Proposition}
\newtheorem{lemma}[thm]{Lemma}
\newtheorem*{introthm}{Theorem}
\newtheorem*{introprop}{Proposition}
\theoremstyle{definition}
\newtheorem{defn}[thm]{Definition}
\newtheorem{definition}[thm]{Definition}
\newtheorem{ex}[thm]{Example}
\newtheorem{example}[thm]{Example}
\newtheorem{rem}[thm]{Remark}
\newtheorem{remark}[thm]{Remark}
\newtheorem{notation}[thm]{Notation}
\newtheorem{notn}[thm]{Notation}
\DeclareMathOperator{\Aut}{Aut}
\DeclareMathOperator{\im}{im}
\DeclareMathOperator{\Res}{Res}
\DeclareMathOperator{\Tr}{Tr}
\DeclareMathOperator{\Stab}{Stab}
\DeclareMathOperator{\Po}{\mathbb{P}}
\newcommand{\C}{\mathbb{C}}
\newcommand{\Z}{\mathbb{Z}}
\newcommand{\bI}{\mathbb{I}}
\newcommand{\bP}{\mathbb{P}}
\newcommand{\bS}{\mathbb{S}}
\newcommand{\rH}{\mathrm{H}}
\newcommand{\arrowspace}{\hspace{-0.4ex}}
\newcommand{\bigarrowspace}{\hspace{-0.6ex}}
\newcommand{\biggerarrowspace}{\hspace{-0.8ex}}
\newcommand{\downwardarrow}[4]{\hbox{${#1}\arrowspace \downarrow_{#2}^{#3} \bigarrowspace ({#4})$}}
\newcommand{\upwardarrow}[4]{\hbox{${#1} \arrowspace \uparrow_{#2}^{#3} \biggerarrowspace ({#4})$}}
\newcommand{\justdownwardarrow}[3]{\hbox{${#1} \arrowspace \downarrow_{#2}^{#3}$}}
\newcommand{\justupwardarrow}[3]{\hbox{${#1} \arrowspace \uparrow_{#2}^{#3}$}}
\newcommand{\up}[1]{\hbox{$#1\arrowspace\uparrow$}}
\newcommand{\Rup}{\up{\ul{R}}}
\newcommand{\MackUp}[1]{\MackUpGroupPower{#1}{G}{m}}
\newcommand{\MackUpOrbit}[2]{\MackUpGroupPowerOrbit{#1}{G}{m}{#2}}
\newcommand{\MackUpPower}[2]{\MackUpGroupPower{#1}{G}{#2}}
\newcommand{\MackUpGroupPower}[3]{\hbox{$\ul{#1} \arrowspace \uparrow_{#2\times\Sigma_#3}^{#2}$}}
\newcommand{\MackUpGroupPowerOrbit}[4]{\hbox{$\ul{#1} \arrowspace \uparrow_{#2\times\Sigma_#3}^{#2} \hspace{-1ex} ({#4})$}}
\let\c@equation\c@thm
\numberwithin{equation}{section}
\DeclareMathOperator{\Set}{Set}
\newcommand{\Frob}{F^m}
\newcommand{\ul}{\underline}
\newcommand{\rtarr}{\longrightarrow}
\newcommand{\xrtarr}{\xrightarrow}
\newcommand{\id}{\mathrm{id}}
\newcommand{\iso}{\cong}
\newcommand{\mf}[1]{\ul{#1}}
\newcommand{\mpi}{\mf{\pi}}
\newcommand{\inductioncolorname}{orange}
\newcommand{\inductioncolor}{\inductioncolorname}
\newcommand{\powercolor}{blue}
\newcommand{\choicecolor}[1]{\textcolor{blue}{#1}}
\newcommand{\restrict}{\arrow[bend right=35, swap, color=black]}
\newcommand{\induct}{\arrow[bend right=35, swap, color=\inductioncolor]}
\newcommand{\powerarr}{\mathrel{\color{\powercolor}\rtarr}}
\newcommand{\powerxarr}[1]{\mathrel{\color{\powercolor}\xrtarr{#1}}}
\newcommand{\indarr}{\mathrel{\color{\inductioncolor}\rtarr}}
\newcommand{\indxarr}[1]{\mathrel{\color{\inductioncolor}\xrtarr{#1}}}
\newcommand{\mylittlematrix}[1]
{\scalebox{0.4}{$\begin{pmatrix} #1 \end{pmatrix}$}}
\newcommand{\pdiv}{p\text{-}\mathrm{div}}
\newcommand{\pnodiv}{p\text{-}\mathrm{prime}}
\newcommand{\set}[1]{\left\{#1\right\}}%
\newcommand{\sets}[2]{\left\{ #1 \;|\; #2\right\}}%
\newcommand{\longto}{\longrightarrow}%
\newcommand{\into}{\hookrightarrow}%
\newcommand{\vect}[1]{\text{\overrightharp{\ensuremath{#1}}}}
\newcommand{\bigmod}[2]{{\raisebox{.2em}{$#1$}\left/ \raisebox{-.2em}{$#2$}\right.}}
\newcommand*{\da@rightarrow}{\mathchar"0\hexnumber@\symAMSa 4B }
\newcommand*{\da@leftarrow}{\mathchar"0\hexnumber@\symAMSa 4C }
\newcommand*{\xdashrightarrow}[2][]{%
  \mathrel{%
    \mathpalette{\da@xarrow{#1}{#2}{}\da@rightarrow{\,}{}}{}%
  }%
}
\newcommand*{\da@xarrow}[7]{%
  \sbox0{$\ifx#7\scriptstyle\scriptscriptstyle\else\scriptstyle\fi#5#1#6\m@th$}%
  \sbox2{$\ifx#7\scriptstyle\scriptscriptstyle\else\scriptstyle\fi#5#2#6\m@th$}%
  \sbox4{$#7\dabar@\m@th$}%
  \dimen@=\wd0 %
  \ifdim\wd2 >\dimen@
    \dimen@=\wd2 %
  \fi
  \count@=2 %
  \def\da@bars{\dabar@\dabar@}%
  \@whiledim\count@\wd4<\dimen@\do{%
    \advance\count@\@ne
    \expandafter\def\expandafter\da@bars\expandafter{%
      \da@bars
      \dabar@ 
    }%
  }%
  \mathrel{#3}%
  \mathrel{%
    \mathop{\da@bars}\limits
    \ifx\\#1\\%
    \else
      _{\copy0}%
    \fi
    \ifx\\#2\\%
    \else
      ^{\copy2}%
    \fi
  }%
  \mathrel{#4}%
}
\date{\today}
\begin{document}
\title
{Additive power operations in equivariant cohomology}
\author[P. J. Bonventre]{Peter \ul{J}. Bonventre}
\address{Department of Mathematics\\ University of Kentucky\\
Lexington, KY 40506, USA}
\email{peterbonventre@uky.edu}
\author[B. J. Guillou]{Bertrand \ul{J}. Guillou}
\email{bertguillou@uky.edu}
\author[N. J. Stapleton]{Nathaniel \ul{J}. Stapleton}
\email{nat.j.stapleton@uky.edu}
\thanks{
Guillou was supported by NSF grant DMS-1710379.
Stapleton was supported by NSF grant DMS-1906236.
}

\subjclass[2000]{
55N91, 55S25}

\keywords{power operations, Mackey functors, equivariant cohomology}

\begin{abstract} 
Let $G$ be a finite group and  $E$ be an $H_\infty$-ring $G$-spectrum. For any $G$-space $X$ and positive integer $m$, we give an explicit description of the smallest Mackey ideal $\underline{J}$ in $\ul{E}^0(X\times B\Sigma_m)$ for which the reduced $m$th power operation $\ul{E}^0(X) \rtarr \ul{E}^0(X \times B\Sigma_m )/\underline{J}$ is a map of Green functors. 
We obtain this result as a special case of a  general theorem that we establish in the context of $G\times\Sigma_m$-Green functors. 
This theorem also specializes to 
characterize the appropriate ideal $\ul{J}$ when $E$ is a $G_\infty$-ring in global spectra.
We give  example computations for the sphere spectrum, complex $K$-theory, and Morava $E$-theory.
\end{abstract}

\date{\today}


\maketitle

{\hypersetup{linkcolor=black}\tableofcontents}

\def\biblio{}


\section{Introduction}
\label{INTRO_SEC}

\newcommand{\bedit}[1]{{\color{orange}#1}}

Cohomology theories are one of the most effective tools in homotopy theory for the study of topological spaces.
This is especially true of cohomology theories with additional structure, such as a multiplication. 
Another important example of  additional structure is that of a {\it power operation}.
For $E$ a multiplicative cohomology theory and $X$ a space,  an $m$th power operation is a suitable factorization of the multiplicative endomorphism $x\mapsto x^m$ on $E^*(X)$ through $E^*(X\times B \Sigma_m)$,
where $\Sigma_m$ is the symmetric group. 
In many such cohomology theories $E$, this allows one to make use of the cohomology of symmetric groups to produce exotic operations on $E$.
Power operations in cohomology theories
have played an important role in both our theoretical and our computational understanding of essentially all naturally-occurring cohomology theories. 

Power operations on cohomology theories arise from $H_\infty$-ring structures 
on the representing spectra. Recall \cite[Definition~I.3.1]{BMMS} that an $H_\infty$-ring structure on $E$ consists of maps $E^{\wedge m}_{h\Sigma_m} \to E$ in the homotopy category, for all $m\geq 0$, 
satisfying certain compatibility relations. 
This is weaker than the notion of an $E_\infty$-ring, which is the homotopy-invariant version of commutative ring spectrum.

Given an $H_\infty$-ring structure on $E$ and a space $X$, the power operations on $E^0(X)$ arise as follows.
A cohomology class $\alpha\in E^0(X)$ corresponds to a map $\Sigma^\infty X_+ \to E$. Then the $m$th power operation on $\alpha$ is the composition
\begin{equation}
\label{eq:PowerOpComposition}
    \Sigma^\infty (X \times B\Sigma_m)_+ =
    \Sigma^\infty (X_{h\Sigma_m})_+ \xrightarrow{\Delta} \Sigma^\infty (X^{\times m}_{h\Sigma_m})_+ \xrightarrow{\alpha} E^{\wedge m}_{h\Sigma_m} \to E,
\end{equation}
where $\Delta$ is induced by the diagonal map of $X$ and the map on the right comes from the $H_\infty$ structure. In fact, an $H_\infty$-ring structure on $E$ is determined by a collection of power operations on $E$-cohomology \cite[Proposition~VIII.1.2]{BMMS}.

The most useful power operations, Steenrod operations {in ordinary cohomology} and Adams operations {in $K$-theory}, are both additive power operations,
though in general power operations are not additive.
The simplest example of this failure is exhibited in the binomial formula
\[
	(x+y)^m = x^m + y^m + \sum_{i=1}^{m-1} \binom{m}{i} x^i y^{m-i}.
\]
The operation $x\mapsto x^m$ only becomes additive once the right-most sum is set to 0.
More generally, additive
power operations in a cohomology theory $E$
are all built from the universal additive power operation
\begin{equation}
      \label{ADDITIVEPOWEROP_EQ}
P_m \colon E^0(X) \to E^0(X \times B\Sigma_m)/I_{\Tr},
\end{equation}
where 
$I_{\Tr}$ is a specific transfer ideal that can be defined naturally 
in ring spectra $E$ and spaces $X$.
Given the effectiveness of these additive power operations, it is desirable to understand their analogues in other contexts. In this paper we focus on the case of equivariant cohomology theories.
In particular we consider power operations in Borel equivariant, in Bredon or ``genuine'' equivariant, and in global equivariant cohomology theories.

For a finite group $G$
and a $G$-spectrum $E$, the coefficients of $E$ form a $G$-Mackey functor, 
 given by the formula
\[
G/H \mapsto \ul{E}^0(G/H) = [G/H_{+},E]^{G},
\] 
when $H \leq G$ is a subgroup.
More generally, given a $G$-space $X$, the $E$-cohomology of $X$ is a $G$-Mackey functor 
with values
\[
G/H \mapsto \ul{E}^0(X)(G/H) = E^0(G/H \times X) = [(G/H \times X)_{+},E]^{G}.
\]

Power operations for $G$-spectra will arise from $H_\infty$-ring structures on $G$-spectra, which once again will mean a collection of maps $E^{\wedge m}_{h\Sigma_m}\to E$ in the equivariant stable homotopy category (see \cref{GPOWER_OPS_SEC}).
If we  assume that $E$ is equipped with the structure of an $H_{\infty}$-ring in the category of genuine $G$-spectra, then the associated power operation is a map
\[
P_m \colon \ul{E}^0(G/H) \to \ul{E}^0(B\Sigma_m)(G/H),
\]
where $B\Sigma_m$ is a $G$-space with trivial action. In this case, the Mackey functors $\ul{E}^0$ and $\ul{E}^0(B\Sigma_m)$ are both $G$-Green functors, so both $\ul{E}^0(G/H)$ and $\ul{E}^0(B\Sigma_m)(G/H)$ are commutative rings. The map $P_m$ is multiplicative, but not additive, and it does not respect the induction maps in the $G$-Mackey functors. The additivity of $P_m$ reduces to a classical problem, which was solved for spectra in complete generality.  \cite[Proposition VIII.1.4(iv)]{BMMS} identifies an ideal $I_{\Tr} \subseteq \ul{E}^0(B\Sigma_m)(G/H)$, generated by the image of the transfer maps $\ul{E}^0(B\Sigma_i \times \Sigma_j)(G/H) \to \ul{E}^0(B\Sigma_m)(G/H)$ for $i,j>0$ with $i+j = m$,  with the property that the composite
\[
P_m/I_{\Tr} \colon \ul{E}^0(G/H) \to \ul{E}^0(B\Sigma_m)(G/H) \to \ul{E}^0(B\Sigma_m)(G/H)/I_{\Tr}
\]
is a map of commutative rings that respects the restriction maps in the $G$-Mackey functor structure (see \cref{ITR_MACKEY}). However, these maps do not necessarily respect the induction maps in the $G$-Mackey functor structure. The goal of this paper is to identify and study the minimal Mackey ideal $\ul{J} \subseteq \ul{E}^0(B\Sigma_m)$ so that the composite
\[
P_m/\ul{J} \colon \ul{E}^0 \to \ul{E}^0(B\Sigma_m)  \to \ul{E}^0(B\Sigma_m)/\ul{J}
\]
is a map of Green functors. 

The ideal $\ul{J}(G/H) \subseteq \ul{E}^0(B\Sigma_p)(G/H)$ is built out of transfer maps. Given a surjective map of finite $G \times \Sigma_m$-sets $X \to Y$, applying homotopy orbits for the $\Sigma_m$-action gives a cover of $G$-spaces $X_{h \Sigma_m} \to Y_{h \Sigma_m}$. This gives rise to a transfer map in $E$-cohomology
\[
\Tr \colon E^0(X_{h \Sigma_m}) \to E^0(Y_{h \Sigma_m}).
\]
If $Y = G/H$, with trivial $\Sigma_m$-action, then the target of this transfer map is $E^0(G/H \times B\Sigma_m) = \ul{E}^0(B\Sigma_m)(G/H)$. 

In the case that $m = p$ is a prime, the ideal $\ul{J}(G/H) \subseteq \ul{E}^0(B\Sigma_p)(G/H)$ is defined to be the ideal generated by the transfer maps induced by the maps of $G \times \Sigma_m$-sets 
\begin{enumerate}[(i)]
\item \label{item1} $(G \times \Sigma_p)/(H \times \Sigma_i \times \Sigma_j) \to G/H$ for $i+j = p$ and $i,j>0$ and
\item \label{item2} $(G \times \Sigma_p)/\Gamma(a) \to G/H$, for all subgroups $S \leq H$ and homomorphisms $a \colon S \to \Sigma_p$ with image containing a $p$-cycle, where $\Gamma(a) \subseteq G \times \Sigma_p$ is the graph subgroup of $a$.
\end{enumerate} 

By construction,  $I_{\Tr}$ is contained in $\ul{J}(G/H)$, and $\ul{J}$ is natural in the cohomology theory $E$.
The following result is the special case of the main theorems of this paper when $E$ is an $H_\infty$-ring $G$-spectrum and when $m=p$.
It makes use of
 \cref{JG_ACTION_PRIME_NONBOREL_THM} and is a special case of \cref{JSUBMACKEY_LEM} and \cref{GEquiv_Main_Thm}.

\begin{introprop}
Assume that $E$ is an $H_{\infty}$-ring in genuine $G$ spectra. The ideals $\ul{J}(G/H)$, defined above, 
assemble to a Mackey ideal $\ul{J} \subseteq \ul{E}^0(B\Sigma_p)$,  minimal
with the property that the composite
\[
P_p/\ul{J} \colon \ul{E}^0 \to \ul{E}^0(B\Sigma_p)/\ul{J}
\]
is a map of $G$-Green functors.
\end{introprop}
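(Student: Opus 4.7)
The plan is to verify three things in turn: (a) that $\ul{J}$ really is closed under restriction and transfer in $\ul{E}^0(B\Sigma_p)$, so that it is a sub-Mackey functor; (b) that the quotient $P_p/\ul{J}$ is multiplicative, additive, and commutes with both restriction and induction; and (c) that $\ul{J}$ is the smallest such ideal. Multiplicativity of $P_p$ is automatic from the $H_\infty$-structure, and restriction-compatibility is already true of $P_p$ itself before quotienting, so the real content lies in additivity, induction-compatibility, and minimality.

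For (a), I would take a generator of $\ul{J}(G/H)$ --- a transfer $\Tr^{G\times\Sigma_p}_K$ with $K$ of type (i) or (ii) --- and compute its image under $\Res^H_{H'}$ and $\Tr^H_{H'}$ for $H'\le H\le G$ using the double-coset formula. Each double-coset contribution is again a transfer from a subgroup of the same shape: a product $H''\times\Sigma_i\times\Sigma_j$ in case (i), or a graph of a homomorphism whose image retains a $p$-cycle in case (ii), since conjugation preserves cycle type. Closure under multiplication by an arbitrary class of $\ul{E}^0(B\Sigma_p)$ is the projection formula.

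The heart of (b) is induction-compatibility:
\[
P_p\bigl(\Ind_H^G x\bigr) \;\equiv\; \Ind_H^G\bigl(P_p(x)\bigr) \pmod{\ul{J}(G/G)}.
\]
The plan is to apply the power-of-transfer formula coming from the $H_\infty$-structure. Since $\Ind_H^G$ is the transfer associated to $G/H \to G/G$, $P_p$ of this transfer equals the transfer along $(G/H)^p_{h\Sigma_p} \to B\Sigma_p$ applied to $P_p(x)$. The $G\times\Sigma_p$-set $(G/H)^p$ decomposes into orbits classified by conjugacy classes of pairs $(S,\phi)$ with $S\le H$ and $\phi\colon S\to\Sigma_p$, each orbit being of the form $(G\times\Sigma_p)/\Gamma(\phi)$. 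The trivial-$\phi$ orbit contributes exactly $\Ind_H^G(P_p(x))$, the principal term. The orbits with $\phi(S)$ contained in a proper Young subgroup $\Sigma_i\times\Sigma_j$ contribute transfers of type (i). The remaining orbits are exactly those with $\phi(S)$ containing a $p$-cycle --- a Sylow argument shows that a subgroup of $\Sigma_p$ of order divisible by $p$ must contain a $p$-cycle --- and they contribute transfers of type (ii). Every error term therefore lies in $\ul{J}(G/G)$. Ordinary additivity modulo $\ul{J}$ at each level $G/H$ reduces to the classical BMMS fact that the transfer ideal $I_{\Tr}(G/H)$ is contained in $\ul{J}(G/H)$, which is immediate from the definition of the type (i) generators.

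For (c), I would run the formula above in reverse against universal test classes --- the equivariant sphere spectrum, or $\Sigma^\infty_+$ of a tautological $G$-space carrying the relevant subgroup structure --- so that each individual generator of $\ul{J}$ appears as an un-cancellable error term in some expression $P_p(\Ind_H^G x) - \Ind_H^G(P_p(x))$ or $P_p(x+y)-P_p(x)-P_p(y)$; naturality in $E$ then forces any Green-functor-inducing Mackey ideal to contain that generator. The main obstacle, and the step I expect to take the most care, is organizing the $G\times\Sigma_p$-orbit decomposition of $(G/H)^p$ so that the contributions match the generators (i) and (ii) exactly, with no overcounting and no redundant generators; once this bookkeeping is in hand, both the Green-functor statement and minimality drop out of the same orbit analysis.
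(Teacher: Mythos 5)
Your overall route is the paper's: split $(G/H)^{\times p}$ into the diagonal and its complement, identify the diagonal's contribution with $\Tr_H^G(P_p(x))$, and sort the complementary orbits into type (i) or type (ii) according to whether the $\Sigma_p$-image of the stabilizer is transitive. However, several steps as written would fail. The orbit bookkeeping is wrong: the $G\times\Sigma_p$-orbits of $(G/H)^{\times p}$ are \emph{not} all of the form $(G\times\Sigma_p)/\Gamma(\phi)$ with $S\le H$. The diagonal orbit has stabilizer $H\times\Sigma_p$, not the graph of the trivial homomorphism $H\times e$ --- this is precisely why its contribution is $\Tr_H^G(P_p(x))$ rather than an $\ul{I}_{\Tr}$-divisible term; orbits with some but not all entries equal have stabilizers containing nontrivial Young subgroups and are not graph subgroups at all; and for the all-distinct orbits the group $S$ is the set-wise stabilizer of $\{g_1H,\dots,g_pH\}$, which typically strictly contains $H$ and need not be conjugate into it (for $(eH,gH,\dots,g^{p-1}H)$ one gets $S=\langle H,g\rangle$; see \cref{LongCycleStab}, and \cref{S_EXAMPLE} for why one cannot fix the subgroup). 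Getting this classification right is the content of \cref{GRAPHSTAB_LEM} and \cref{STABSURJ_PROP}; your argument survives only because $\ul{J}(G/G)$ happens to admit generators for all $S\le G$, but as stated the matching of error terms to generators does not go through.

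Two further gaps. Closure of $\ul{J}$ under restriction is not justified by ``conjugation preserves cycle type'': the double-coset formula applied to a type (ii) generator $\Gamma(a)$ produces the subgroups $\Gamma(a)^w\cap(K\times\Sigma_p)$, i.e.\ graphs of restrictions of conjugates of $a$ to smaller subgroups, and restricting $a$ can destroy the $p$-cycle in the image. One must run the dichotomy again: if the restricted image is still transitive one obtains a type (ii) generator of $\ul{J}(G/K)$, and if not the transfer factors through $K\times\Sigma_i\times\Sigma_j$ and lands in $\ul{I}_{\Tr}(G/K)$; this case split is the substance of the paper's proof of \cref{JSUBMACKEY_LEM}. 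Next, you verify $P_p\circ\Tr_H^G\equiv\Tr_H^G\circ P_p$ only at the level $G/G$; a map of Green functors requires this for every $\Tr_H^L$ with $H\le L\le G$, which forces you to decompose the fiber product $(G/H)^{\times_{G/L}p}$ rather than the full power (the paper's $Z^{L,H}$, \cref{PUSHPULL_COR_TWO}, and the reduction of \cref{Z_LEM}). Finally, the minimality step is only a plan, and ``naturality in $E$'' is not available since the claim concerns a fixed $E$; any minimality argument has to show that the listed transfers are generated by the actual error terms $P_p(\Tr x)-\Tr(P_p x)$ and $P_p(x+y)-P_p(x)-P_p(y)$, which your sketch does not yet do.
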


In fact, this result holds much more generally. Let $\ul{R}$ be a $G \times \Sigma_m$-Green functor. From $\ul{R}$ we may form the induced $G$-Green functor $\justupwardarrow{\ul{R}}{G \times \Sigma_m}{G}$ given by the formula  
\[
\upwardarrow{\ul{R}}{G \times \Sigma_m}{G}{G/H} = \ul{R} \, \big( (G \times \Sigma_m)/(H \times \Sigma_m) \big).
\]
As $\ul{R}$ is a $G \times \Sigma_m$-Green functor, it may be viewed as a functor from the category of finite $G\times \Sigma_m$-sets to commutative rings that admits transfers along surjections. In this situation, we define $\ul{J}(G/H) \subseteq \upwardarrow{\ul{R}}{G \times \Sigma_m}{G}{G/H}$ to be the ideal generated by the images of certain transfer maps generalizing the maps in \eqref{item1} and \eqref{item2}. These maps are described explicitly in \cref{sec:Ideals} and make use of a group extension of $\Gamma(a)$ by a product of symmetric groups described in \cref{STAB_SEC}. 
The following result is \cref{JSUBMACKEY_LEM}.

\begin{introthm}
\hypertarget{IntroThm_2}
Let $\ul{R}$ be a $G \times \Sigma_m$-Green functor. The ideals $\ul{J}(G/H) \subseteq \upwardarrow{\ul{R}}{G \times \Sigma_m}{G}{G/H}$ assemble into a $G$-Mackey ideal $\ul{J} \subseteq \justupwardarrow{\ul{R}}{G \times \Sigma_m}{G}$.  
\end{introthm}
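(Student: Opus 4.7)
The plan is to check that the levelwise ideals $\ul{J}(G/H)$ are preserved by the $G$-Mackey functor structure on $\justupwardarrow{\ul{R}}{G\times\Sigma_m}{G}$. Since each $\ul{J}(G/H)$ is, by construction, an ideal of the ring $\upwardarrow{\ul{R}}{G\times\Sigma_m}{G}{G/H}$, only compatibility with restriction and induction remains. The key observation is that, for $K\leq H\leq G$, induction and restriction from $G/K$ to $G/H$ in $\justupwardarrow{\ul{R}}{G\times\Sigma_m}{G}$ are nothing but the transfer and restriction in the original Green functor $\ul{R}$ along the canonical $(G\times\Sigma_m)$-equivariant projection $\pi : (G\times\Sigma_m)/(K\times\Sigma_m) \to (G\times\Sigma_m)/(H\times\Sigma_m)$.

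For closure under induction: if $\beta = \Tr_f(y) \in \ul{J}(G/K)$ with $f$ a generating map, transitivity of transfer in $\ul{R}$ gives $\Ind^H_K \beta = \Tr_{\pi\circ f}(y)$. One verifies that the composite $\pi\circ f$ factors through a generating map for $\ul{J}(G/H)$: a product-type generator factors via an intermediate $(G\times\Sigma_m)/(H\times\Sigma_i\times\Sigma_j)$, while a graph-type generator composes directly to one of the same form (the defining subgroup $S\leq K$ is automatically a subgroup of $H$). A final application of transitivity then writes $\Ind^H_K\beta$ as a transfer along a generator of $\ul{J}(G/H)$, placing it in $\ul{J}(G/H)$.

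For closure under restriction: given $\Tr_f(y) \in \ul{J}(G/H)$ with $f : T \to (G\times\Sigma_m)/(H\times\Sigma_m)$ a generator, the Mackey base-change formula for $\ul{R}$ expresses
\[
\Res^H_K \Tr_f(y) = \sum_{[g] \in K \backslash G / H} \Tr_{f_g}(y_g),
\]
where the sum is over double cosets (the $\Sigma_m$-factors being absorbed on both sides) and $f_g$ is the inclusion of the $K\times\Sigma_m$-orbit in the pullback of $T$, with stabilizer $(K\times\Sigma_m) \cap g\,\Stab(f)\,g^{-1}$. It suffices to see that each $f_g$ is a generator of $\ul{J}(G/K)$, or at least factors through one. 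For product-type generators this is immediate: the stabilizer becomes $(K\cap gHg^{-1})\times\Sigma_i\times\Sigma_j$. For graph-type generators and their generalizations via the extensions of \cref{STAB_SEC}, the intersection produces the graph of a restricted-conjugated homomorphism $a^g|_{gSg^{-1}\cap K}$; if the required image condition persists, this is again a generator, while if not, the homomorphism factors through a proper Young subgroup $\Sigma_i\times\Sigma_j$ and the orbit map factors through a product-type generator of $\ul{J}(G/K)$.

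The main obstacle is the restriction step for the graph-type generators in the general-$m$ setting: one must track the extensions of $\Gamma(a)$ by products of symmetric groups constructed in \cref{STAB_SEC} through intersection with $K\times\Sigma_m$ and $G$-conjugation, and confirm that each resulting subgroup either lies on the prescribed list of generators for $\ul{J}(G/K)$ or decomposes further via maps that do. This is fundamentally a group-theoretic verification, but it is precisely the step where the explicit constructions of \cref{STAB_SEC} become indispensable.
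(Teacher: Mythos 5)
Your outline is the same as the paper's proof of \cref{JSUBMACKEY_LEM}: reduce to checking that the generating transfers are closed under induction and restriction; handle induction by composing transfers and observing that the generating orbits for $\ul{J}(G/K)$ map to generating orbits for $\ul{J}(G/H)$; handle restriction by the double-coset/pull-back formula, decompose the pullback into orbits, and split into cases according to whether the stabilizer of an orbit has transitive image in $\Sigma_m$. The non-transitive case landing in the partition-subgroup generators and the transitive case requiring the group theory of \cref{STAB_SEC} is exactly how the paper proceeds.

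The step you flag as ``the main obstacle,'' however, is not an open verification but is already closed by results you cite: by \cref{GRAPHSTAB_LEM} the stabilizer of a point of a generating orbit is $\Sigma_q \wr_{\underline{n}} \Gamma(a)$ for some $a\colon S \to \Sigma_n$, and its intersection with $gKg^{-1}\times\Sigma_m$ is again of the form $\Sigma_q \wr_{\underline{n}} \Gamma(a|_{S\cap gKg^{-1}})$ because the wreath construction is a preimage under $G\times(\Sigma_q\wr\Sigma_n)\to G\times\Sigma_n$ and the intersection only constrains the $G$-coordinate. When the image in $\Sigma_m$ is transitive, the surjectivity statement of \cref{STABSURJ_PROP} (via the explicit section $\zeta$ of \cref{FATSURJ_PROP}, which identifies any graph subgroup with transitive image as $\Gamma(a_{S'/K'})$ with $K'=\Stab_{S'}(1)$) exhibits this subgroup as the stabilizer of an element of $Z^{K,H'}$ for some $H'$, hence as a generator of $\ul{J}(G/K)$; when it is not transitive, the orbit maps through $G/K\times\Sigma_m/(\Sigma_i\times\Sigma_j)$. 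So your dichotomy is correct and the needed group theory is precisely \cref{FATSURJ_PROP} and \cref{STABSURJ_PROP}; you should invoke them rather than defer them. Two small corrections: the double-coset sum in your restriction formula should be indexed by $\pi_G(\Lambda)\backslash G/K$ (equivalently, by the $G\times\Sigma_m$-orbits of the pullback), where $\Lambda$ is the stabilizer of the generating orbit, not by $K\backslash G/H$; and for generators of $\ul{J}(G/L)$ with $L<G$ one should first use that $Z^{L,H}_G$ is induced from $L\times\Sigma_m$ (\cref{Z_LEM}) to reduce the stabilizer analysis to the absolute case treated in \cref{STABSURJ_PROP}.
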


When $E$ is a homotopy commutative ring $G$-spectrum, we get a $G \times \Sigma_m$-Green functor $\ul{R}$ via the formula
\[
\ul{R}\big((G \times \Sigma_m)/\Lambda\big) = E^0\Big(\big((G \times \Sigma_m)/\Lambda\big)_{h\Sigma_m}\Big).
\]
We can restrict this to a $G$-Green functor via the formula
\[
    \justdownwardarrow{\ul{R}}{G}{G \times \Sigma_m}(G/H) = \ul{R}\big((G\times \Sigma_m)/(H\times e)\big).
\]
In this case, the restriction of $\ul{R}$ to $G$ satisfies 
$    \justdownwardarrow{\ul{R}}{G}{G \times \Sigma_m} = \ul{E}^0, $
the induced $G$-Green functor satisfies $\justupwardarrow{\ul{R}}{G \times \Sigma_m}{G} = \ul{E}^0(B\Sigma_m)$ and, when $m=p$, the Mackey ideals called $\ul{J}$ in the proposition and theorem above agree.

Global equivariant homotopy theory furnishes us with further examples of $G \times \Sigma_m$-Green functors. Recall that a global spectrum in the sense of \cite{global} is a spectrum with simultaneous and compatible actions of all finite groups. If $E$ is a homotopy commutative ring global spectrum, then for a finite group $G$, there is an underlying homotopy commutative genuine $G$-spectrum $E_G$. Thus, fixing $m \geq 0$, there is an associated $G \times \Sigma_m$-Green functor $\ul{R}$ given by
\[
\ul{R}((G \times \Sigma_m)/\Lambda) = \ul{E}_{G \times \Sigma_m}((G \times \Sigma_m)/\Lambda) = [((G \times \Sigma_m)/\Lambda)_{+},E_{G \times \Sigma_m}]^{G\times \Sigma_m}.
\]
The above 
theorem furnishes us with a Mackey ideal $\ul{J} \subseteq \justupwardarrow{\ul{R}}{G \times \Sigma_m}{G}$.

In case $E$ is either an $H_{\infty}$-ring in genuine $G$-spectra or a $G_\infty$-ring global spectrum (the global analogue of $H_{\infty}$), then the $m$th power operation is a map
\begin{equation} \label{eq:introeq}
P_m \colon \downwardarrow{\ul{R}}{G}{G \times \Sigma_m}{G/H} \to \upwardarrow{\ul{R}}{G \times \Sigma_m}{G}{G/H}.
\end{equation}
In \cref{GREEN_POWER_OPS_SEC}, we introduce the notion of an $m$th total power operation on a $G \times \Sigma_m$-Green functor that captures the $m$th power operation in each of these examples. Although both the source and target of $P_m$ are Green functors, the  operation $P_m$ is not a map of Green functors before passing to a quotient.
The
proposition above is then a special case of the following theorem (see \cref{GEquiv_Main_Thm,Global_Main_Thm}).

\begin{introthm} 
Let $E$ be an $H_{\infty}$-ring in genuine $G$-spectra or a $G_\infty$-ring global spectrum and let $\ul{R}$ be the associated $G\times\Sigma_m$-Green functor defined above.
The composite
\[
P_m/\ul{J} \colon  \justdownwardarrow{\ul{R}}{G}{G \times \Sigma_m} \rtarr \justupwardarrow{\ul{R}}{G \times \Sigma_m}{G} \rtarr \justupwardarrow{\ul{R}}{G \times \Sigma_m}{G} \! /\ul{J}
\]
is a map of $G$-Green functors.
\end{introthm}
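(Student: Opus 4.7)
The plan is to verify, at every transitive $G$-set $G/H$, that
\[
P_m/\ul{J}\colon \downwardarrow{\ul{R}}{G}{G\times\Sigma_m}{G/H} \longrightarrow \bigl(\justupwardarrow{\ul{R}}{G\times\Sigma_m}{G}/\ul{J}\bigr)(G/H)
\]
is a ring homomorphism and that these maps collectively respect the restriction and induction maps of the two $G$-Mackey structures. This decomposes into four checks: preservation of products, preservation of sums, compatibility with restrictions, and compatibility with induction.

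Multiplicativity of $P_m$ is part of the definition of an $m$th total power operation and holds before passing to any quotient. Compatibility with restrictions is standard naturality: restriction in the $G$-Mackey structure is pullback along a map $G/K \to G/H$, and $P_m$ is natural in the $G$-space parameter. Additivity after quotienting by $\ul{J}$ is an equivariant enhancement of the classical Bruner--May--McClure--Steinberger argument: expanding $(x+y)^m$ by the multinomial theorem in the universal example writes $P_m(x+y) - P_m(x) - P_m(y)$ as a sum of elements in the image of transfers from subgroups of the form $H \times \Sigma_i \times \Sigma_j$ with $i+j=m$ and $i,j > 0$, precisely the type (i) generators of $\ul{J}$, so the difference vanishes in the quotient.

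The substantive new content is compatibility with induction. Given $K \leq H$ and $x \in \downwardarrow{\ul{R}}{G}{G\times\Sigma_m}{G/K}$, the goal is
\[
P_m\bigl(\Tr_K^H(x)\bigr) \;\equiv\; \Tr_K^H\bigl(P_m(x)\bigr) \pmod{\ul{J}(G/H)}.
\]
In both the genuine $G$-equivariant $H_\infty$-ring setting and the ultra-commutative global setting, $P_m$ of a transfer admits a ``power of a transfer'' expansion: decomposing the finite $H \times \Sigma_m$-set $(H/K)^m$ into orbits rewrites $P_m(\Tr_K^H(x))$ as a sum of transfers indexed by those orbits, each summand arising from the stabilizer of a chosen representative tuple. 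The single diagonal orbit contributes precisely $\Tr_K^H(P_m(x))$. The stabilizer of every other orbit is realized, via \cref{STAB_SEC}, as an extension of a graph subgroup $\Gamma(a)$ for some homomorphism $a\colon S \to \Sigma_m$ with $S \leq H$ (having non-trivial image, and containing a $p$-cycle when $m=p$) by a product of symmetric groups. These are exactly the domains of the type (ii) transfer generators of $\ul{J}$ described in \cref{sec:Ideals}, so every off-diagonal summand vanishes in the quotient.

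The principal obstacle will be the combinatorial bookkeeping in this orbit decomposition: matching every stabilizer that actually appears to one of the prescribed generators of $\ul{J}$, and doing so uniformly across both the $H_\infty$-ring and ultra-commutative global settings so that a single argument suffices. Once those identifications are in place, the abstract Mackey-ideal statement \cref{JSUBMACKEY_LEM} guarantees that $\ul{J}$ is closed under the full Mackey structure, and the specializations \cref{GEquiv_Main_Thm,Global_Main_Thm} translate the abstract conclusion back into the two geometric settings of interest.
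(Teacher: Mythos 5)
Your proposal is correct and follows essentially the same route as the paper's proof of \cref{Green_Main_Thm}: multiplicativity and additivity modulo $\ul{I}_{\Tr}\subseteq\ul{J}$ come from the definition of a total power operation, restriction-compatibility is naturality, and induction-compatibility is the push-pull formula (\cref{PUSHPULL_COR_TWO}) applied to the fiber power $(G/K)^{\times_{G/H} m}\cong G/K\amalg Z^{H,K}$, whose diagonal orbit yields $\Tr(P_m(x))$ and whose complement is killed in $\ul{J}$ (\cref{JCOMPAT_LEM}), with \cref{JSUBMACKEY_LEM} supplying the Mackey-ideal structure needed at the last step. The only cosmetic difference is that you match each off-diagonal orbit stabilizer to a type~(ii) generator via \cref{STABSURJ_PROP}, whereas \cref{JGL_DEF} puts the transfer from all of $Z^{H,K}$ into $\ul{J}$ directly, so that bookkeeping is not needed for this theorem.
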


The general case of a $G\times\Sigma_m$-Green functor with $m$th power operation is treated in \cref{Green_Main_Thm}.

Since Borel equivariant cohomology theories are examples of genuine equivariant cohomology theories, if $E$ is a nonequivariant $H_{\infty}$-ring spectrum, then the 
proposition above
may be applied to the Borel equivariant cohomology theory associated to $E$. However, in this setting, it is also natural to ask for the smallest ideal with the property that the transfer from a specific subgroup commutes with the power operation after taking the quotient by the ideal. If $H \leq G$, then $BH \to BG$ is equivalent to a finite cover of spaces. We would like the smallest ideal $J_{H}^{G} \subseteq E^0(BG \times B\Sigma_m)$ such that the following diagram commutes
\[
\xymatrix{E^0(BH) \ar[r] \ar[d]_{\Tr} & E^0(BH \times B\Sigma_m)/I_{\Tr} \ar[d]^{\Tr} \\ E^0(BG) \ar[r] & E^0(BG \times B\Sigma_m)/J_{H}^{G}.}
\]
Specializing the case of genuine $G \times \Sigma_m$-spectra to Borel equivariant $G \times \Sigma_m$-spectra, the $G\times \Sigma_m$-Green functor associated to $E$ is given by the formula
\[
\ul{R}((G \times \Sigma_m)/\Lambda) = E^0\Big(\big((G \times \Sigma_m)/\Lambda\big)_{hG \times \Sigma_m}\Big) \cong E^0(B\Lambda) 
\]
and 
$\upwardarrow{\ul{R}}{G \times \Sigma_m}{G}{G/H}  \iso
E^0(BH \times B\Sigma_m)$.
For the case where $H$ is a normal subgroup of $G$, in \cref{NORMAL_SEC} we explicitly describe a subset of the transfer maps that go into the construction of $\ul{J}(G/G) \subseteq \upwardarrow{\ul{R}}{G \times \Sigma_m}{G}{G/G}$ and show that $J_{H}^{G}$ is the sub-ideal generated by the image of this subset. As a consequence of this description of $J_{H}^{G}$, we learn that, when $H$ is normal, if $m$ and $|G/H|$ are relatively prime, then $J_{H}^{G} = I_{\Tr} \subseteq E^0(BG \times B\Sigma_m)$.  

Making use of the explicit descriptions of $J_{H}^{G}$ and $\ul{J}$ described above, we consider a number of examples coming from equivariant homotopy theory. In particular, we look at group cohomology, Burnside rings, representation rings, class functions, and the generalized class functions of Hopkins, Kuhn, and Ravenel. In each case, we analyze quirks of the particular case and calculate $\ul{J}$ in some small examples such as $m = 2,3$.

\begin{remark}
We note that there are many possible notions of equivariant commutative ring spectra (e.g. \cite{BH15}) (for a fixed group $G$). In the results above, we have chosen to work with a version for which the power operations discussed in this paper exist; namely, we consider $H_\infty$-rings in genuine $G$-spectra (\cref{HRING_DEF}). There are various stronger notions of commutative rings in $G$-spectra, which endow the associated cohomology theories with additional structure. We do not consider such additional structure here.
\end{remark}

\begin{remark}
It is natural to wonder if the presence of an $N_{\infty}$-ring structure could impact the results in this paper. Although an $N_{\infty}$-ring structure does give rise to norms (i.e. a Tambara functor structure on homotopy groups), the authors do not see how these norms relate to the power operations arising from the underlying $H_{\infty}$-ring structure considered here. On the other hand, a $G_{\infty}$-ring global spectrum gives rise to a global Tambara functor structure as well as a global power functor structure on the level of homotopy groups. These are expected to encode equivalent algebraic data, but this is not in the literature at this time \cite[Remark~5.1.7]{global}. A global power functor includes power operations as in \eqref{eq:introeq}.
\end{remark}

\subsection{Conventions}

\begin{itemize}
\item
By $G$, we will always mean a finite group.
\item By a {\it graph subgroup} $\Gamma \leq G \times \Sigma_m$, we will mean a subgroup such that $\Gamma \cap \Sigma_m = \{e\}$. Such a subgroup is the graph of a homomorphism $K \rtarr \Sigma_m$, where $K=\pi_G(\Gamma)$ and $\pi_G\colon G\times \Sigma_m \rtarr G$ is the projection. 
\item We will use the notation $\underline{n}=\{1,\dots,n\}$.
\item A $G$-spectrum will always mean in the ``genuine'' sense. In other words, our $G$-spectra are indexed over a complete $G$-universe.
\item We will use transfer maps in cohomology throughout, and in order to help orient the reader, we always display such transfer maps in \textcolor{\inductioncolor}{\inductioncolorname}.
\item In \cref{sec:Ideals}, we abbreviate an induced Mackey functor $\justupwardarrow{\ul{R}}{G \times \Sigma_m}{G} $ to $\Rup$.
\item In \cref{ExamplesSection}, we abbreviate an induced Mackey functor $\justupwardarrow{(\ul{A}_{G\times \Sigma_m})\hspace{-0.1ex}}{G\times \Sigma_m}{G}$ to
$\MackUp{A}$.
\end{itemize}

\subsection{Organization}

We begin \cref{GROUPTHEORY_SEC} by considering the Borel equivariant case.
Key results about the ideal $\ul{J}$ are given in \cref{ABS_SEC} and \cref{NORMAL_SEC}; these results are specialized to the case $m=p$ is prime in \cref{PRIME_SEC}. 
Our main results about power operations appear in \cref{sec:GreenPowerOps}.
We introduce the notion of an $m$th total power operation for a $G\times\Sigma_m$-Green functor in \cref{GREEN_POWER_OPS_SEC}.
One of the central results of the article, \cref{JSUBMACKEY_LEM}, is that $\ul{J}$ is a Mackey ideal.
 \cref{ExamplesSection} gives a number of examples. We consider the sphere spectrum, $KU$-theory, Eilenberg-Mac~Lane spectra, and height 2 Morava $E$-theory.

\subsection{Acknowledgments} It is a pleasure to thank Sune Precht Reeh, Tomer Schlank, and Michael Stahlhauer for helpful comments.


\tikzset{
  symbol/.style={
    draw=none,
    every to/.append style={
      edge node={node [sloped, allow upside down, auto=false]{$#1$}}}
  }
}

\section{The Borel equivariant case} 
\label{GROUPTHEORY_SEC}

The purpose of this section is to understand the relationship between
the additive power operations 
for an $H_\infty$-ring spectrum $E$, 
or rather its associated Borel equivariant theory,
and transfers along finite covers of the form $BH \to BG$ where $H \leq G$ is a subgroup.

Recall that an $H_{\infty}$-ring structure includes the data of maps of spectra
\[
E^{\wedge m}_{h\Sigma_m} \to E,
\]
for each $m \geq 0$, satisfying a collection of relations in the stable homotopy category. These maps give rise to power operations as in \cref{eq:PowerOpComposition}: given a homotopy class of maps $\Sigma^{\infty}_+ BG \to E$, we may form the composite
\[
\Sigma^{\infty}_+ (BG \times B\Sigma_m) \to 
(\Sigma^{\infty}_+ BG)^{\wedge m}_{h\Sigma_m} \to E^{\wedge m}_{h\Sigma_m} \to E,
\]
in which 
 the first map is induced by the diagonal $G \to G^{\times m}$, the second map is the $m$th symmetric power of the given map $\Sigma^{\infty}_+ BG \to E$, and the final map uses the $H_{\infty}$-ring structure. 
Alternatively, we can and will rewrite the homotopy orbit space $BG^{\times m}_{h\Sigma_m}$ as the equivalent classifying space $BG \wr \Sigma_m$, where $G\wr \Sigma_m$ is the wreath product. Then the first map is induced by the subgroup inclusion $G\times \Sigma_m \leq G \wr \Sigma_m$.
 The composite is an element in $E^0(BG \times B\Sigma_m)$, and this procedure gives the power operation
\[
P_m \colon E^0(BG) \to E^0(BG \times B\Sigma_m),
\]
which is a multiplicative map.

Further, given a subgroup $H \leq G$, there is an additive transfer map $E^0(BH) \indxarr{} E^0(BG)$. Let $I_{\Tr} \subseteq E^0(BG \times B\Sigma_m)$ be the image of the direct sum of the transfer maps along the inclusions $G \times \Sigma_i \times \Sigma_j \to G \times \Sigma_m$, where $i+j = m$ and $i,j > 0$. Frobenius reciprocity 
(see \cref{defn:GGreen})
implies that the subgroup $I_{\Tr}$ is an ideal.  \cite[Proposition VIII.1.4(iv)]{BMMS} implies that this ideal controls the failure of $P_m$ to be additive. Therefore the map 
\[
P_m/I_{\Tr} \colon E^0(BG) \to E^0(BG \times B\Sigma_m)/I_{\Tr}
\]
is a ring map.

It is natural to wonder if the transfers along $H 
\leq G$ and $H \times \Sigma_m \leq G \times \Sigma_m$ are compatible with the power operations $P_m/I_{\Tr}$ for $H$ and $G$. This is false. The failure to be compatible is a consequence of the appearance of the map $G \times \Sigma_m \to G \wr \Sigma_m$ induced by the diagonal $G \to G^{\times m}$ in the definition of $P_m$.

The goal of this section is to describe, as explicitly as possible,
the smallest ideal
\[
J_H^G \subset E^0(BG \times B\Sigma_m )
\]
containing $I_{\Tr}$
such that the diagram
\begin{equation}
      \label{BORELSQ_EQ}
      \begin{tikzcd}
            E^0(BH) \arrow[d, color=\inductioncolor] \arrow[r, "P_m/I_{\Tr}"]
            &
            E^0(BH \times B \Sigma_m )/I_{\Tr} \arrow[d, color=\inductioncolor]
            \\
            E^0(BG) \arrow[r, "P_m/J_H^G"]
            &
            E^0(BG \times B\Sigma_m ) / J^G_H
      \end{tikzcd}
\end{equation}
commutes
and the horizontal maps are additive, so that in particular this is a commuting square of abelian groups. 

We will also describe the absolute ideal $J^G \subseteq E^0(BG \times B\Sigma_m)$, which is the sum of the ideals $J_H^G$ as $H$ varies. 
In terms of the notation from \cref{INTRO_SEC}, $J^G$ is what was denoted there as $\underline{J}(G/G)$.
This provides the smallest ideal such that the reduced power operation $P_m/J^G$ commutes with transfers $BH \to BG$ for {\it all} subgroups $H \leq G$.

In \cref{OVERVIEW_SEC}, we outline this story.
In the remaining subsections, we describe explicitly the ideals $J_H^G$ and $J^G$ in various cases.
First, in \cref{STAB_SEC}, we study the stabilizers for the $G\times \Sigma_m$-action on $(G/H)^{\times m}$ for $H \leq G$ and $m \geq 0$.
In \cref{ABS_SEC}, we provide an explicit description of $J^G$.
In \cref{NORMAL_SEC}, we provide an explicit description of $J_H^G$ in the case where $H \trianglelefteq G$ is a normal subgroup.
In \cref{PRIME_SEC}, we provide a more concrete description in the case that $m$ is prime.
Finally, in \cref{RELPRIME_SEC}, we consider the simpler case where $m$ and $|G|$ are relatively prime.

\subsection{Overview}
\label{OVERVIEW_SEC}

We have two tasks: first, to ensure that the power operation is additive, and second, to ensure it commutes with transfer maps.
As we saw in \cref{INTRO_SEC}, in order for the power operation to be additive and thus a map of commutative rings,
one must quotient by the ideal generated by transfers along the proper partition subgroups
$G\times \Sigma_i \times \Sigma_j < G\times \Sigma_m $;
that is, we must have $I_{\Tr} \subseteq J_H^G$.
However, this ideal is not necessarily sufficient to make diagram \eqref{BORELSQ_EQ} commute,
as we demonstrate  in \cref{ExamplesSection}.
The problem boils down to the relationship between the transfer along the inclusion 
$H \wr \Sigma_m \leq G \wr \Sigma_m$ and the diagonal map $G\times \Sigma_m \to G \wr \Sigma_m$.
We are particularly interested in studying the power operation that lands in the product because
of the relationship to power operations for genuine equivariant cohomology theories.

In \cite{BMMS}, the power operation $P_m \colon E^0(BH) \to E^0(BH \times B \Sigma_m)$ is defined as a composite
\[
      P_m \colon E^0(BH) \xrtarr{\bP_m} E^0(BH \wr \Sigma_m) \xrightarrow{\Delta^{\**}} E^0(BH \times B \Sigma_m)
\]
where $\bP_m$ is the \textit{total power operation}.
The operation $\bP_m$
is functorial on all stable maps, and thus every subgroup $H \subseteq G$ gives rise to a commutative diagram
\begin{equation}
      \label{TPO_EQ}
      \begin{tikzcd}
      E^0(BH) \ar[r,"\Po_m"] \ar[d,color=\inductioncolor,"\Tr"] & E^0(BH \wr \Sigma_m) \ar[d,color=\inductioncolor,"\Tr"] \\ E^0(BG) \ar[r,"\Po_m"] & E^0(BG \wr \Sigma_m).
      \end{tikzcd}
\end{equation}
After composing the bottom arrow with the map in $E$-cohomology induced by the diagonal
$BG \times B\Sigma_m  \to BG \wr \Sigma_m$,
we may extend the diagram above by considering a homotopy pullback. We may do this by making use of the fact (see \cite[Chapter 4]{Adams}, for instance) that, given a homotopy pullback of spaces 
\[
\xymatrix{Y  \ar[d] & H \ar[l] \ar[d] \\ X  & A \ar[l] }
\]
in which $Y \to X$ is a finite cover, there is a commutative diagram
\begin{equation}
      \label{RCohom_HPB}
\begin{tikzcd}
E^0(Y) \arrow[r] \arrow[d,color=\inductioncolor,"\Tr"] & E^0(H) \ar[d,color=\inductioncolor,"\Tr"] \\ E^0(X) \ar[r] & E^0(A),
\end{tikzcd}
\end{equation}
where the horizontal maps are restriction maps and the vertical maps are transfer maps. For any subgroups $H,K \leq G$, the homotopy pullback of the span $BH \to BG \leftarrow BK$ is equivalent to
$(G/H)_{hK}$.

Making use of the isomorphism of $\Sigma_m \times G$-sets
\[
      \left(G \wr \Sigma_m\right) / \left(H \wr \Sigma_m\right) \cong (G/H)^{\times m},
\]    
we get the following proposition:
\begin{prop}
      We have the following homotopy pullback of spaces
\[
      \begin{tikzcd}
      BH \wr \Sigma_m \arrow[d] & 
      (G/H)^{\times m}_{h(G\times \Sigma_m )} \arrow[d] \arrow[l] \\ 
      BG \wr \Sigma_m & 
      BG \times B\Sigma_m. \arrow[l]
      \end{tikzcd}
\]
\end{prop}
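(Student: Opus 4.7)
The plan is to recognize the square as an instance of a standard homotopy pullback attached to a subgroup inclusion pulled back along a group homomorphism. Explicitly, for any group homomorphism $\phi\colon K\to L$ and any subgroup $M\leq L$, the square
\[
\begin{tikzcd}
(L/M)_{hK} \ar[d] \ar[r] & BM \ar[d] \\
BK \ar[r,"B\phi"'] & BL
\end{tikzcd}
\]
is a homotopy pullback, where $K$ acts on $L/M$ by left translation through $\phi$. One way to verify this is to model $BM\to BL$ as the $L/M$-bundle associated to the universal principal $L$-bundle $EL\to BL$, so that its pullback along $B\phi$ is the $L/M$-bundle associated to the pulled-back principal $L$-bundle on $BK$, which is a standard model for $(L/M)_{hK}\to BK$.

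I would apply this with $K=G\times\Sigma_m$, $L=G\wr\Sigma_m$, $M=H\wr\Sigma_m$, and $\phi$ the diagonal homomorphism sending $(g,\sigma)$ to $((g,\dots,g),\sigma)$. The conclusion then reduces to identifying $(G\wr\Sigma_m)/(H\wr\Sigma_m)$ with $(G/H)^{\times m}$ as a $G\times\Sigma_m$-set through $\phi$; this is precisely the equivariant bijection recalled immediately before the statement of the proposition.

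The only explicit verification needed is that the set-theoretic isomorphism $((g_1,\dots,g_m),\tau)(H\wr\Sigma_m)\mapsto (g_1H,\dots,g_mH)$ intertwines left translation by the image of $\phi$ with the diagonal $G$-action and permutation $\Sigma_m$-action on $(G/H)^{\times m}$. The main (and entirely minor) obstacle is bookkeeping the wreath product conventions so that the permutation action appears with the correct variance; there is no substantive difficulty.
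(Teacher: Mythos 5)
Your argument is exactly the one the paper uses: it invokes the standard fact that the homotopy pullback of $BM \to BL \leftarrow BK$ is $(L/M)_{hK}$ (here with $K = G\times\Sigma_m$ mapping diagonally into $L = G\wr\Sigma_m$ and $M = H\wr\Sigma_m$) and then identifies $(G\wr\Sigma_m)/(H\wr\Sigma_m)$ with $(G/H)^{\times m}$ as a $G\times\Sigma_m$-set, which is precisely the isomorphism displayed immediately before the proposition. The proposal is correct and essentially identical to the paper's (implicit) proof.
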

Applying $E$-cohomology and composing the resulting diagram \cref{RCohom_HPB} with the total power operation diagram \eqref{TPO_EQ} gives the commutative diagram
\begin{equation}
      \label{BeforeJDiagram}
      \begin{tikzcd}
E^0(BH) \arrow[r] \arrow[d, color=\inductioncolor, "\Tr"'] & 
E^0((G/H)^{\times m}_{h(G\times \Sigma_m )}) \arrow[d, color=\inductioncolor, "\Tr"'] 
\\ E^0(BG) \arrow[r,"P_m"] & 
E^0(BG \times B\Sigma_m).
\end{tikzcd}
\end{equation}

The subset $\Delta(G/H) := \sets{(gH, \ldots, gH)}{g \in G} \subset (G/H)^{\times m}$
is closed under the action of $G\times \Sigma_m $, and there is an equivalence
\[
      \Delta(G/H)_{h(G\times \Sigma_m)}
      \simeq  BH \times B\Sigma_m.
\]
Thus we have a decomposition of spaces
\[
      (G/H)^{\times m}_{h(G\times \Sigma_m)}
      \simeq
      \big(  BH \times B\Sigma_m \big)  \ \coprod \ Z^{G,H}_{h(G\times \Sigma_m)},
\]
where 
\begin{equation} \label{Z}
      Z^{G,H} = (G/H)^{\times m} \setminus \Delta(G/H).
\end{equation}
Applying $E$-cohomology, there is an isomorphism
\begin{equation}
      \label{ECornerDecomp}
      E^0((G/H)^{\times m}_{h(G\times \Sigma_m)})
      \cong
      E^0(BH \times B\Sigma_m) \times E^0(Z^{G,H}_{h(G\times \Sigma_m)}).
\end{equation}
We can obtain $E^0(BH \times B\Sigma_m)/I_{\Tr}$ from this product by taking the quotient by the ideal generated by transfers along
$H \times \Sigma_i \times \Sigma_j  \subseteq H \times \Sigma_m$
for $i,j>0$ and $i+j = m$ and also the transfer along the component
$Z^{G,H}_{h(G\times \Sigma_m)} \subseteq (G/H)^{\times m}_{h(G\times \Sigma_m)}$ (ie. the entire right factor).
We thus make the following definition.

\begin{definition}
      \label{JHG_DEF}
      Define
      $J_{H}^G \subseteq E^0(BG \times B\Sigma_m)$
      to be the ideal generated by the image of the transfers along
      \begin{enumerate}[(i)]
      \item\label{TransferG}  $G\times \Sigma_i \times \Sigma_j \subseteq G\times \Sigma_m$ for $i,j>0$ and $i+j = m$, and
      \item\label{CollapseZ} the composite
            \begin{equation}
                  \label{CollapseZ_EQ}
                  Z^{G,H}_{h(G\times \Sigma_m)} \subseteq
                  (G/H)^{\times m}_{h(G\times \Sigma_m)} \to
                  BG \times B\Sigma_m.
            \end{equation}
      \end{enumerate}
\end{definition}

The following result is then immediate from the above discussion.

\begin{prop} \label{mainthm}
Let $J_{H}^G \subseteq E^0(BG \times B\Sigma_m)$ be the ideal defined above. After taking the quotient by $J_{H}^G$, the transfer and additive power operation are compatible in the sense that the following diagram commutes:
\[
      \begin{tikzcd}[column sep = large]
            E^0(BH) \arrow[d, color=\inductioncolor, "\Tr"'] \arrow[r, "P_m/I_{\Tr}"]
            &
            E^0(BH \times B \Sigma_m)/I_{\Tr} \arrow[d, color=\inductioncolor, "\Tr"]
            \\
            E^0(BG) \arrow[r, "P_m/J_H^G"]
            &
            E^0(BG \times B\Sigma_m)/J_H^G.
      \end{tikzcd}
\]
\end{prop}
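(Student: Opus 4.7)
The plan is to assemble the ingredients set up in \cref{OVERVIEW_SEC}, deducing the statement from the commutative diagram \eqref{BeforeJDiagram} together with the product decomposition \eqref{ECornerDecomp}. Concretely, I would project the upper right corner of \eqref{BeforeJDiagram} onto the summand $E^0(BH \times B\Sigma_m)$ and then reduce modulo $I_{\Tr}$ on top and $J_H^G$ on the bottom.

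First I would identify the top composite after this projection. Since $\Delta(G/H)_{h(G\times\Sigma_m)} \simeq BH\times B\Sigma_m$, the projection in \eqref{ECornerDecomp} is restriction along the inclusion of $BH\times B\Sigma_m$ as the component of $(G/H)^{\times m}_{h(G\times\Sigma_m)}$ lying over the orbit $\Delta(G/H)$; postcomposition with the top arrow of \eqref{BeforeJDiagram} is then the diagonal map $BH\times B\Sigma_m \to BH\wr\Sigma_m$. Consequently the top composite followed by the projection is exactly the factorization $P_m = \Delta^{*}\circ \bP_m$, and by \cite[Proposition VIII.1.4(iv)]{BMMS} it descends modulo $I_{\Tr}$ to the additive operation $P_m/I_{\Tr}$ appearing in the top row of the proposition.

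Next I would verify that the right vertical transfer in \eqref{BeforeJDiagram} descends to a well-defined map $E^0(BH\times B\Sigma_m)/I_{\Tr} \to E^0(BG\times B\Sigma_m)/J_H^G$ that, modulo $J_H^G$, agrees with the full transfer from $E^0\big((G/H)^{\times m}_{h(G\times\Sigma_m)}\big)$. Since the transfer along a disjoint union of covers is the sum of the transfers on each piece, the contribution from $Z^{G,H}_{h(G\times\Sigma_m)}$ lies in $J_H^G$ by item (ii) of \cref{JHG_DEF}. To check that the image of $I_{\Tr}$ also lies in $J_H^G$, transitivity of transfers expresses a generator of $I_{\Tr}$ transferred from $B(H\times \Sigma_i \times \Sigma_j)$ and then pushed down to $BG\times B\Sigma_m$ as the transfer from $B(H\times \Sigma_i\times \Sigma_j)$ along its inclusion into $G\times \Sigma_m$; this factors through the transfer from $B(G\times \Sigma_i\times \Sigma_j)$, which is a generator of type (i) in \cref{JHG_DEF}. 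Combining the two steps, the square in the statement commutes. The argument is essentially bookkeeping; the only substantive input beyond \eqref{BeforeJDiagram} is the observation that the contribution of the $Z^{G,H}$-component to the transfer is exactly what item (ii) of \cref{JHG_DEF} is designed to kill.
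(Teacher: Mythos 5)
Your proof is correct and follows essentially the same route as the paper: both arguments read the statement off from the commutative diagram \eqref{BeforeJDiagram} together with the product decomposition \eqref{ECornerDecomp}, observing that the two families of generators of $J_H^G$ are exactly what is needed to make the right vertical transfer descend to $E^0(BH\times B\Sigma_m)/I_{\Tr}\to E^0(BG\times B\Sigma_m)/J_H^G$. You are somewhat more careful than the paper's (very terse) proof in identifying the projected top composite with $P_m/I_{\Tr}$ and in checking, via transitivity of transfers, that the image of $I_{\Tr}$ lands in the type~(i) generators of $J_H^G$; both of these details are correct.
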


\begin{proof}
Consider the commutative diagram \eqref{BeforeJDiagram}. According to \eqref{ECornerDecomp}, the top right vertex decomposes as a product, one factor of which is the desired $E^0(  BH \times B\Sigma_m)$. Thus in order for the right vertical transfer in \eqref{BeforeJDiagram} to factor through the projection onto $E^0(  BH \times B\Sigma_m )/I_{\Tr}$, we must collapse the image in $E^0( BG \times B\Sigma_m)$ of the complementary factor $E^0(Z^{G,H}_{h(G\times \Sigma_m)})$ and $I_{\Tr}$; 
these desiderata motivated the definition of $J_H^G$.
\end{proof}

We give a complete description of $J_H^G \subseteq E^0(BG \times B \Sigma_m)$ in the case where $H \trianglelefteq G$ is a normal subgroup: Assume that $S$ is a subgroup of $G$ with $H < S \leq G$ and $[S : H] = n$, where $n$ divides $m$. Let $a_{S/H} \colon S \to \Aut_{\Set}(S/H) \cong \Sigma_n$ be the action map by left multiplication and let $\Gamma(a_{S/H}) \subseteq G \times \Sigma_n$ be the graph of $a_{S/H}$. 
The projection from $G\times \Sigma_n$ onto $\Sigma_n$ equips the graph $\Gamma(a_{S/H})$ with a map to $\Sigma_n$. Using this, we can form the semi-direct product
\[
(\Sigma_{q})^{n} \rtimes \Gamma(a_{S/H}),
\]
where $qn = m$.
Now we have
\[
(\Sigma_{q})^{n} \rtimes \Gamma(a_{S/H}) \leq (\Sigma_{q})^{n} \rtimes (G \times \Sigma_n) \cong G \times (\Sigma_{q})^{n} \rtimes \Sigma_n \leq G \times \Sigma_m.
\]
See \cref{WREATH_NOTN} for another description of this subgroup of $G \times \Sigma_m$.

The following is a direct consequence of \cref{NORMSURJ_COR}.

\begin{theorem}\label{JHG_ACTION_THM}
      Fix a normal subgroup $H \triangleleft G$.
      Then $J_H^G \subseteq E^0(BG \times B \Sigma_m)$ is the ideal generated by $I_{\Tr}$ and
      the images of the  transfers along the inclusions
      \begin{equation}
            \label{JHG_ACTION_EQ}
            (\Sigma_{q})^{n} \rtimes \Gamma(a_{S/H}) \longto G \times \Sigma_m
      \end{equation}
      for all $m = nq$ and $H < S \leq G$ with $[S : H] = n \neq 1$.
\end{theorem}

Note that although the definition of $a_{S/H}$ depends on a choice of ordering of $S/H$, the choice will not affect the image of the transfer. 

We also consider the related absolute ideal, to ensure compatibility with transfers from all subgroups of $G$.

\begin{definition}
      \label{JG_DEF}
      Define $J^G \subseteq E^0( BG \times B \Sigma_m )$
      to be the ideal generated by $ J_H^G$ for all $H\leq G$.
      More explicitly,
      $J^G$ is the ideal generated by the image of the transfers along
      \begin{enumerate}[(i)]
      \item\label{TransferGG}  $G\times  \Sigma_i \times \Sigma_j \subseteq G\times \Sigma_m$ for $i,j>0$ and $i+j = m$, and
      \item\label{CollapseZH} the composites
            \begin{equation}
                  \label{CollapseZH_EQ}
                  Z^{G,H}_{h(G\times \Sigma_m)} \subseteq
                  (G/H)^{\times m}_{h(G\times \Sigma_m)} \to
                  BG \times B\Sigma_m 
            \end{equation}
            for all $H < G$.
      \end{enumerate}
\end{definition}

\cref{mainthm} implies the following.
\begin{corollary}
      \label{maincor}
      Let $J^G \subseteq E^0(BG \times B\Sigma_m)$ be the ideal defined above.
      Taking  the quotient by $J^G$, the additive power operation is compatible with \textit{all} transfers
      in the sense that the following diagram commutes for all $H < G$:
      \begin{equation}
            \begin{tikzcd}
                  E^0(BH) \arrow[d, color=\inductioncolor,"\Tr"'] \arrow[r, "P_m/I_{\Tr}"]
                  &
                  E^0(BH \times B\Sigma_m)/I_{\Tr} \arrow[d, color=\inductioncolor,"\Tr"]
                  \\
                  E^0(BG) \arrow[r, "P_m/J^G"]
                  &
                  E^0(BG\times B\Sigma_m)/J^G.    
            \end{tikzcd}
      \end{equation}
\end{corollary}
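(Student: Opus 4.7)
The plan is to derive the corollary directly from \cref{mainthm} by exploiting the containment $J_H^G \subseteq J^G$ for each subgroup $H \leq G$. First I would verify that the two descriptions of $J^G$ given in \cref{JG_DEF} agree: the ideal generated by $\bigcup_H J_H^G$ coincides with the ideal generated by the images of the transfers (i) and (ii) enumerated there. This is essentially bookkeeping, since each individual $J_H^G$ is, by \cref{JHG_DEF}, generated by the transfers along $G \times \Sigma_i \times \Sigma_j \subseteq G \times \Sigma_m$ (which are independent of $H$) together with the transfer along the collapse map \eqref{CollapseZ_EQ} (which depends on $H$). Summing these over all $H < G$ yields precisely the generating set of \cref{JG_DEF}.

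Next, fix any subgroup $H < G$. By the containment $J_H^G \subseteq J^G$, the quotient map $E^0(BG \times B\Sigma_m)/J_H^G \twoheadrightarrow E^0(BG \times B\Sigma_m)/J^G$ is well-defined. Post-composing the commutative square supplied by \cref{mainthm} with this projection on the bottom right corner (and using that the projection is compatible with the natural map on the top right, since $I_{\Tr}$ lies in both ideals) yields the desired commutative square with $J^G$ in place of $J_H^G$. Since this works for every $H < G$, the statement follows.

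The main (and only minor) obstacle is ensuring that the identification of the two descriptions of $J^G$ is done cleanly, so that the \emph{explicit} generators listed in \cref{JG_DEF} really match $\sum_{H} J_H^G$; but this is purely a matter of unwinding definitions, and no new homotopical input is required beyond \cref{mainthm}.
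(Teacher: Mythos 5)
Your proposal is correct and matches the paper's argument: the paper simply observes that \cref{maincor} follows from \cref{mainthm}, exactly via the containment $J_H^G \subseteq J^G$ and further quotienting, as you do. The preliminary check that the two descriptions of $J^G$ in \cref{JG_DEF} coincide is indeed just definitional bookkeeping and requires no additional input.
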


The following description of $J^G$
 is a  consequence of \cref{STABSURJ_PROP}.

\begin{theorem}\label{JG_ACTION_THM}
      The ideal $J^G \subseteq E^0(BG \times B \Sigma_m)$ is generated by $I_{\Tr}$ and
      the images of the  transfers along the inclusions
      \begin{equation}
            \label{JG_ACTION_EQ}
            (\Sigma_{q})^{n} \rtimes \Gamma(a_{S/K}) \longto G \times \Sigma_m
      \end{equation}
      for all $m = nq$ and $K < S \leq G$ with $[S : K] = n \neq 1$,
      where $a_{S/K} \colon S \to \Aut_{\Set}(S/K) \cong \Sigma_n$ is the action map by left multiplication.
\end{theorem}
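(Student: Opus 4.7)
The plan is to reduce the computation of $J^G$ to an orbit-stabilizer analysis of $(G/H)^{\times m}$ under $G \times \Sigma_m$, and then to invoke \cref{STABSURJ_PROP} for the classification of stabilizers. By \cref{JG_DEF}, the ideal $J^G$ is generated by $I_{\Tr}$ together with the images of the transfers $E^0(Z^{G,H}_{h(G \times \Sigma_m)}) \longto E^0(BG \times B\Sigma_m)$ as $H$ ranges over subgroups of $G$. Since $Z^{G,H}$ is a finite $(G \times \Sigma_m)$-set, the Borel construction $Z^{G,H}_{h(G \times \Sigma_m)}$ is a disjoint union of classifying spaces $B\Lambda$, indexed by the $(G \times \Sigma_m)$-orbits in $Z^{G,H}$, with $\Lambda$ the stabilizer of any chosen representative. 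The transfer from a finite disjoint cover is a sum of transfers from each component, so $J^G$ is generated by $I_{\Tr}$ together with the images of transfers along all such $\Lambda \hookrightarrow G \times \Sigma_m$.

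Next I would classify these stabilizers. Fix a representative $(g_1 H, \ldots, g_m H) \in Z^{G,H}$, let $\{a_1 H, \ldots, a_n H\}$ be the set of distinct labels appearing (with $n \geq 2$, since the tuple is not in $\Delta(G/H)$), and let $q_1, \ldots, q_n$ be the multiplicities. Let $S = \{g \in G : g \cdot \{a_1H, \ldots, a_nH\} = \{a_1H, \ldots, a_nH\}\}$ be the setwise stabilizer. If the multiplicities are not all equal, or if they are all equal to some $q$ but the $S$-action on the labels is intransitive, then by grouping coordinates by the orbit-type of their labels one finds $\Lambda \leq G \times \Sigma_a \times \Sigma_{m-a}$ for some $0 < a < m$; functoriality of transfer along subgroup inclusions then places the contribution of $\Lambda$ inside $I_{\Tr}$. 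In the remaining case, all multiplicities equal some $q$ with $m = nq$ and $S$ acts transitively on the $n$ labels; then \cref{STABSURJ_PROP} identifies $\Lambda = \Sigma_q \wr_{\underline{n}} \Gamma(a_{S/K})$, where $K \leq S$ is the pointwise stabilizer of $a_1H$ under $S$, so that $[S:K] = n \neq 1$ and $a_{S/K} \colon S \to \Sigma_n$ is the left-multiplication action.

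To finish I would verify the converse realization: every triple $(K, S, q)$ with $K < S \leq G$, $[S:K] = n \neq 1$ and $m = nq$ arises from some orbit. Taking $H = K$, the tuple in $(G/K)^{\times m}$ whose entries are the $n$ cosets in $S/K \subseteq G/K$, each repeated $q$ consecutive times, belongs to $Z^{G,K}$, and a direct calculation (which is a special case of \cref{STABSURJ_PROP}) identifies its stabilizer with $\Sigma_q \wr_{\underline{n}} \Gamma(a_{S/K})$. Combining this with the previous paragraph gives both inclusions of the two ideals and proves the theorem.

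The principal obstacle is the clean identification, in the uniform-multiplicity and transitive case, of $\Lambda$ with the wreath-product group $\Sigma_q \wr_{\underline{n}} \Gamma(a_{S/K})$; this requires carefully tracking how the $\Sigma_m$-component interacts with the $G$-component under the block decomposition $\underline{m} \cong \underline{n} \times \underline{q}$ induced by the labels. This bookkeeping is precisely the content of \cref{STABSURJ_PROP}, after which the remaining steps — decomposing covers, applying transfer functoriality, and reducing non-transitive or non-uniform cases to $I_{\Tr}$ — are routine.
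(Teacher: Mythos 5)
Your proposal is correct and is essentially the argument the paper intends: the paper states that \cref{JG_ACTION_THM} is a consequence of \cref{STABSURJ_PROP}, with the reduction (decomposing $Z^{G,H}_{h(G\times\Sigma_m)}$ into components $B\Lambda$, discarding those with non-transitive $\pi_{\Sigma_m}(\Lambda)$ into $I_{\Tr}$, and classifying the rest) outlined at the start of \cref{STAB_SEC}. Your multiplicity/label-orbit case division is an equivalent reformulation of the paper's transitivity criterion, and your converse-realization step is exactly the section $\zeta$ from the proof of \cref{STABSURJ_PROP}.
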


\subsection{Stabilizers of elements in $(G/H)^{\times m}$}
\label{STAB_SEC}

Our goal is to understand the ideals $J_H^G$ and $J^G$ appearing in \cref{mainthm,maincor} and defined in \cref{JHG_DEF,JG_DEF}
using two collections of transfers.
In general, there can be overlap between these transfers in the following sense:
sometimes the map from a component of $Z^{G,H}_{h(G \times \Sigma_m)}$ 
factors through
$BG \times B\Sigma_i \times B\Sigma_j$ for some choice of $i$ and $j$.
It then suffices to describe the components of $Z^{G,H}_{h (G \times \Sigma_m)}$ that 
do not factor through $BG \times B\Sigma_i \times B\Sigma_j$ for any $i,j > 0$ such that $i+j=m$.

To start, we note that $Z^{G,H}_{h(G \times \Sigma_m)}$ of \eqref{CollapseZ_EQ} 
is equivalent to the disjoint union of classifying spaces of the form
$B\Lambda$ 
for $\Lambda \leq G \times \Sigma_m$ the stabilizer of some element in $Z^{G,H}$ \eqref{Z}.
Moreover, the associated component of $Z^{G,H}_{h (G \times \Sigma_m)}$
does not factor through some $B G \times B \Sigma_i \times B \Sigma_j$
if and only if
the image $\pi_{\Sigma_m}(\Lambda) \leq \Sigma_m$ is a \textit{transitive} subgroup,
where $\pi_{\Sigma_m} \colon G \times \Sigma_m \to \Sigma_m$ is the projection.
Thus, it suffices to analyze the stabilizers of the $G \times \Sigma_m$-action on $(G/H)^{\times m}$ 
that have transitive image in $\Sigma_m$.

Elements of the diagonal $\Delta(G/H) = \sets{(gH, \dots, gH)}{g \in G}$ have the simplest stabilizers:
$\Stab_{G \times \Sigma_m}(gH,\dots, gH) = g H g^{-1} \times \Sigma_m$.
However, the stabilizers of the elements of $Z^{G,H}$ can be quite complicated.

In this section, we establish some group-theoretic results regarding these stabilizers
and set up notation for describing these groups in the sections ahead.

\begin{notation}\label{WREATH_NOTN}
      Given $\Lambda \leq G \times \Sigma_n$ and a group $H$, let
      $H \wr_{\underline{n}} \Lambda \leq G \times (H \wr \Sigma_n)$ denote the preimage
      \begin{equation}
            \label{WRG_EQ}
            H \wr_{\underline{n}} \Lambda = \pi^{-1}\Lambda,
      \end{equation}
      where $G \times (H \wr \Sigma_n) \xrightarrow{\pi} G \times \Sigma_n$ is the canonical map.
\end{notation}

Note that the group $H\wr_{\underline{n}} \Lambda$ is isomorphic to $H^n \rtimes \Lambda$, where $\Lambda$ acts on $H^n$ via its projection to $\Sigma_n$. This follows from the canonical isomorphism $G\times (H\wr \Sigma_n) \iso H^n\rtimes  (G\times \Sigma_n)$.

\begin{notation}
      Let $X$ be a $G$-set, and $Y \subseteq X$ a finite subset.
      We write $S_Y \leq G$ to denote the {set-wise stabilizer} of $Y$,
      \[ S_Y = \{ s\in G \mid  \text{$s\cdot y\in Y$ for all $y \in Y$} \}.\]
      A choice of total ordering $Y = \set{y_1, y_2, \dots, y_n}$ induces an associated {action map}
      \begin{equation}
            \label{AY_EQ}
            a_Y \colon S_Y \longto \Aut_{\Set}(Y) \cong \Sigma_n.
      \end{equation}
Different choices of ordering on $Y$ give conjugate action homomorphisms.
\end{notation}

In general, $S_Y$, $a_Y$, and $\ker(a_Y)$ can be difficult to compute. We give one primary example.
\begin{example}
      \label{KMODH_EX}
      Let $H \leq G$,
      $X = G/H$, and $Y = \set{g_1H, \dots, g_n H} \subseteq G/H$.
      Then we have
      \[
            S_Y = \bigcup_{\sigma \in \im(a_Y)} \bigcap_{i=1}^{n}g_{\sigma(i)} H g_i^{-1}
            \qquad \text{and}
            \qquad
            \ker(a_Y) = \bigcap_{i=1}^n g_i H g_i^{-1}.
      \]
      If $Y = K/H$ for some $H \leq K \leq G$,
      then  $S_{K/H} = K$.
      Indeed, if $g \in G$ satisfies $ g \cdot eH = kH$, then $g = g\cdot e$ lies in  $k H \subseteq K$.
\end{example}

\begin{lemma}
      \label{GRAPHSTAB_LEM}
      Let $Y \subseteq X$ be a finite subset of a $G$-set, equipped with a total ordering $Y = \set{y_1, \dots, y_n}$.
      \begin{enumerate}
      \item[(i)] Let $\vect y = (y_1, y_2, \dots, y_n) \in X^{\times n}$. Then
            \[
                  \Stab_{G \times \Sigma_n}(\vect y) = \Gamma(a_Y), 
            \]
            where $\Gamma(a_Y)$ is the graph subgroup associated to  $a_Y\colon S_Y \rtarr \Sigma_n$.
      \item[(ii)] Let 
      \[ \vect y^{\** q} = ( \underbrace{y_1,\dots, y_1}_q, \underbrace{y_2,\dots, y_2}_q, \dots, \underbrace{y_n,\dots, y_n}_q)\]
      be the $q$-fold shuffle of $\vect y$.
      Then the stabilizer of $\vect y^{\** q}$ is 
            \[
                  \Stab_{G \times \Sigma_{qn}}(\vect y^{\** q})
                  = \Sigma_q \wr_{\underline{n}} \Gamma(a_Y) \leq G \times (\Sigma_q \wr \Sigma_n) \leq G \times \Sigma_{qn}.
            \]
      \end{enumerate}
\end{lemma}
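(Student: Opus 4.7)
The plan is to unwind the definitions of the $G\times \Sigma_n$-action on $X^{\times n}$ (respectively the $G\times \Sigma_{qn}$-action on $X^{\times qn}$) and directly verify the two equalities of subgroups. Part (i) is the essential computation; part (ii) will be deduced from (i) together with the block structure of the shuffle $\vect y^{*q}$.

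For part (i), I would fix the convention that $(g,\sigma)\cdot (x_1,\dots,x_n) = (g\cdot x_{\sigma^{-1}(1)},\dots, g\cdot x_{\sigma^{-1}(n)})$ and check that $(g,\sigma)\cdot \vect y = \vect y$ is equivalent to $g\cdot y_i = y_{\sigma(i)}$ for all $i$. Since the $y_i$ are pairwise distinct (as $Y$ is a subset, listed without repetition), this forces $g\cdot Y\subseteq Y$, hence $g\in S_Y$, and the induced permutation of $Y$ is $\sigma = a_Y(g)$. Conversely, any $g\in S_Y$ gives rise to the pair $(g,a_Y(g))\in \Gamma(a_Y)$ which manifestly stabilizes $\vect y$. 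This gives $\Stab_{G\times \Sigma_n}(\vect y) = \Gamma(a_Y)$.

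For part (ii), I would exploit that the entries of $\vect y^{*q}$ partition $\underline{qn}$ into $n$ blocks $B_1,\dots,B_n$ of size $q$, where $B_i$ consists of the indices holding the value $y_i$, and (again by distinctness) a permutation $\tau\in \Sigma_{qn}$ can send $\vect y^{*q}$ to another tuple with the same block structure only if it preserves the block partition, i.e.\ lies in $\Sigma_q\wr \Sigma_n$. If $(g,\tau)\in G\times \Sigma_{qn}$ stabilizes $\vect y^{*q}$, then in particular $\tau \in \Sigma_q\wr \Sigma_n$, and writing $\sigma\in \Sigma_n$ for its image, the condition on the block values reduces exactly to the condition $g\cdot y_i = y_{\sigma(i)}$ from part (i); the internal $\Sigma_q$-factors $\tau_1,\dots,\tau_n$ are unconstrained since they only permute equal entries within a block. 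Hence $(g,\tau)$ stabilizes $\vect y^{*q}$ iff $\tau\in \Sigma_q\wr \Sigma_n$ and $(g,\sigma)\in \Gamma(a_Y)$, which by \cref{WREATH_NOTN} is precisely the description of $\Sigma_q\wr_{\underline n}\Gamma(a_Y)$ as the preimage of $\Gamma(a_Y)$ under $G\times (\Sigma_q\wr \Sigma_n)\to G\times \Sigma_n$.

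The main obstacle is largely bookkeeping: one must be careful that the $y_i$ are distinct (so that the block partition for part (ii) is well-defined) and that the convention for the $\Sigma_n$-action on $X^{\times n}$ is consistent with the definition of the graph subgroup $\Gamma(a_Y)$, so that the equality is an identity of subgroups and not merely up to an inner automorphism. Once those conventions are fixed at the outset, both verifications are short.
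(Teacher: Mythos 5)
Your proof is correct and follows essentially the same route as the paper's: a direct unwinding of the action for (i), and for (ii) the observation that the block structure forces the permutation into $\Sigma_q\wr\Sigma_n$ with the inner $\Sigma_q$-factors unconstrained. If anything, you are slightly more explicit than the paper in justifying that the stabilizer of $\vect y^{*q}$ is \emph{contained} in $G\times(\Sigma_q\wr\Sigma_n)$, which is a welcome bit of care.
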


\begin{proof}
      For (i), $(g, \sigma)$ is in $\Stab(\vect y)$ if and only if $g y_i = y_{\sigma i}$ for all $i$.
      This defines an action of $g$ on $Y$, and thus $g$ is in $S_Y$ and $\sigma = a_Y(g)$.

      For (ii), it is clear that $(\Sigma_q)^{\times n}$ stabilizes the $q$-fold shuffle $\vect y^{\** q}$. 
      The group $\Sigma_n$ acts by permuting the blocks of size $q$.
      Given $(g,(\vect \tau,\sigma))\in G\times (\Sigma_q\wr \Sigma_n)$, we have
      $\left((g, (\vect \tau,\sigma)) \cdot \vect y^{\** q}\right)_{i + kq} = g y_{\sigma^{-1} k}$ for $1\leq i \leq q$.
      Thus if $(g, (\vect \tau,\sigma))$ is in the stabilizer, we must have $(g, \sigma) \in \Gamma(a_Y)$,
      while the $\tau_i \in \Sigma_q$ have no restrictions.
\end{proof}

\subsection{The absolute transfer ideal $J^G$}
\label{ABS_SEC}
In \cref{STABSURJ_PROP}, we give a complete description  of the subgroups of $G\times \Sigma_m$ that we must transfer along  to form $J^G$.
This description will be used in the proof of \cref{JSUBMACKEY_LEM}.

First, we establish a restricted case. Let $\Delta^{\mathrm{fat}}(G/H) \subseteq (G/H)^{\times n}$ be the fat diagonal, which consists of tuples of cosets in which two or more of the cosets are identical.

\begin{proposition}
      \label{FATSURJ_PROP}
      Fix $n \geq 1$. Then the assignment of the stabilizer $\Stab_{G \times \Sigma_n}(-)$
      admits a section $\zeta$
      \begin{equation}
            \label{FATSURJ_EQ}
            \begin{tikzcd}[ampersand replacement=\&]
                  \left\{
                        \begin{array}{c|c}
                          \vect g H \in (G/H)^{\times n} \setminus \Delta^{\mathrm{fat}}(G/H)
                          \Big. 
                          &
                          H \leq G,\ \pi_{\Sigma_n}(\Stab( \vect g H)) \leq \Sigma_n \mbox{ transitive}
                        \end{array}
                  \right\}
                  \arrow[d, "\Stab"]
                  \\
                  \left\{
                        \begin{array}{c|c}
                        \Big. 
                         \Gamma(\phi) \leq G \times \Sigma_n 
                          &
                        \Big.
                        \phi \colon S \to \Sigma_n,\
                        S \leq G,\
                        \im(\phi) \leq \Sigma_n \mbox{ transitive}
                        \end{array}
                  \right\}
                  \arrow[u,bend left=25,dashed,"\zeta"]
            \end{tikzcd}
      \end{equation}
             and  is therefore surjective.
\end{proposition}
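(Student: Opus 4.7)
The plan is to explicitly construct the section $\zeta$: given a graph subgroup $\Gamma(\phi) \leq G \times \Sigma_n$ with $\phi \colon S \to \Sigma_n$ having transitive image, I will produce a subgroup $H \leq G$ and an $n$-tuple $\vec{g}H \in (G/H)^{\times n}$ with distinct entries whose stabilizer is exactly $\Gamma(\phi)$. The natural candidate is to take $H$ to be a point stabilizer of the $S$-action on $\underline{n}$ determined by $\phi$, and then take $\vec{g}H$ to be an enumeration of the $S$-orbit $S/H \subseteq G/H$.

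More precisely, since $\phi$ has transitive image, $S$ acts transitively on $\underline{n}$ via $\phi$. Set $H := \Stab_S^{\phi}(1) \leq S \leq G$. Then $[S:H] = n$ and the map $sH \mapsto \phi(s)(1)$ gives an $S$-equivariant bijection $S/H \cong \underline{n}$. Choose representatives $g_1 = e, g_2, \dots, g_n \in S$ with $\phi(g_i)(1) = i$, so that $Y := S/H = \{g_1 H, \dots, g_n H\} \subseteq G/H$ is a subset of size $n$, ordered in a way that matches the identification $S/H \cong \underline{n}$. Define $\zeta(\Gamma(\phi)) := (g_1 H, g_2 H, \dots, g_n H) \in (G/H)^{\times n}$.

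To verify this is a section, I would apply \cref{GRAPHSTAB_LEM}(i) to conclude $\Stab_{G \times \Sigma_n}(\vec{g}H) = \Gamma(a_Y)$, so it remains to check that $a_Y = \phi$ as homomorphisms $S_Y \to \Sigma_n$. First, since $Y = S/H$ with $H \leq S \leq G$, \cref{KMODH_EX} yields $S_Y = S$. Second, by construction the ordering on $Y$ is chosen so that left-multiplication by $s$ on $g_i H$ corresponds under the bijection $S/H \cong \underline{n}$ precisely to the permutation $\phi(s)$; thus $a_Y = \phi$ on the nose. Finally, the entries of $\vec{g}H$ are distinct (so $\vec{g}H \notin \Delta^{\text{fat}}(G/H)$), and the image of the stabilizer in $\Sigma_n$ equals $\im(\phi)$, which is transitive by hypothesis, so $\zeta$ lands in the specified domain.

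The only mildly delicate step is the bookkeeping in selecting the ordering of $Y$ so that $a_Y$ equals $\phi$ rather than a conjugate — this hinges on interpreting the bijection $S/H \cong \underline{n}$ correctly, using that $H$ was defined as a point stabilizer. Once that identification is fixed, surjectivity of $\Stab$ is immediate from the existence of the section.
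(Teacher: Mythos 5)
Your construction of the section is exactly the paper's: take the point stabilizer $K=\Stab_S(1)$ of the transitive $S$-action encoded by $\phi$, use the resulting bijection $S/K\cong\underline{n}$ to order the cosets, and verify via \cref{GRAPHSTAB_LEM}(i) and \cref{KMODH_EX} that the stabilizer of the resulting tuple recovers $\Gamma(\phi)$. The argument is correct and matches the paper's proof, with your explicit check that $a_Y=\phi$ (rather than merely a conjugate) being a welcome bit of added care.
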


\begin{proof}
      According to \cref{GRAPHSTAB_LEM}(i), the stabilizer is a graph subgroup.
      Since the cosets $g_i H$ are all distinct 
      and $G/H$ is a transitive $G$-set,
      the image of $\phi$ in $\Sigma_n$ must be a transitive subgroup.
      
      Now, any $\phi$ as above encodes a transitive action of $S$ on $\{1,\dots,n\}$.
      In particular, 
      letting $K=\Stab_S(1)$, 
      the action
      provides a bijection $S/K\iso \{1,\dots,n\}$, which specifies an ordering of $S/K$.
      Thus we may consider $S/K$ as an element of $(G/K)^{\times n}$, and
      the assignment $\Gamma(\phi) \mapsto S/K \in (G/K)^{\times n}$
      is a section of \cref{FATSURJ_EQ} by \cref{KMODH_EX}.
\end{proof}

\begin{remark}
      \label{NEWSTAB_REM}
      In light of the description of the section $\zeta$ to \eqref{FATSURJ_EQ} given in the proof above,
      after passing to $\Sigma_n$-conjugacy classes, 
      we may replace the target of $\Stab$ in \eqref{FATSURJ_EQ} with
      \begin{equation}
            \left\{
                  \begin{array}{c|c}
                    [ \Gamma(a_{S/K}) \leq G \times \Sigma_n ] \Big.
                    &
                      K \leq S \leq G,\ [S \colon K] = n
                  \end{array}
            \right\},
      \end{equation}
      where $a_{S/K}\colon S \rtarr \Sigma_n$ is the action homomorphism specified in \cref{AY_EQ}.
     Note that different choices of orderings $S/K \xrightarrow{\iso} \underline{n}$
      induce the same $\Sigma_n$-conjugacy class $[\Gamma(a_{S/K})]$.
\end{remark}

      We note that the source in \cref{FATSURJ_EQ} runs over {\it all} subgroups of $G$. 
      One might hope for a similar result with a fixed $H\leq G$. However, 
      \[
            K = \Stab_S(1) = a_{\vect gH}^{-1}(\Sigma_1 \times \Sigma_{n-1}) = g_1 H g_1^{-1} \cap S_{\vect g H}
      \]
      need not equal $H$, as we show in \cref{S_EXAMPLE}. 
      Therefore, we cannot expect a section if we first fix $H\leq G$.
      We will show in \cref{FATNORMSURJ_COR} that
      such a section does exist if $H$ is normal in $G$. 

\begin{example}
      \label{S_EXAMPLE}
      Let $G = D_{8}$, generated by a rotation $r$ and a reflection $s$. 
      Let $H = \langle rs \rangle$, and consider $(eH, sH) \in (G/H)^{\times 2}$.
            We check by hand that $S = \langle s \rangle$ and $K = e$,
            so the section $\zeta$ sends $\Gamma(a_{(eH,sH)})$ to the $S$-set $S/e = (e, s)$.

            Note in particular that $K \neq H$, $m = 2$ is prime, and $r^2 \in H$.
            Thus in general we cannot  a priori fix $H \leq G$ in \eqref{FATSURJ_EQ}.
\end{example}

We will now give a complete description of the stabilizers which appear in $J^G$.
Recall that $Z^{G,H}$ is $(G/H)^{\times m} \setminus \Delta(G/H)$.

\begin{proposition}
      \label{STABSURJ_PROP}
      Fix $m \geq 1$. Then the assignment of the stabilizer $\Stab_{G \times \Sigma_m}(-)$ gives a surjection
      \begin{equation}
            \label{STABSURJ_EQ}
            \begin{tikzcd}[ampersand replacement=\&, column sep=small]
                  \left\{
                        \begin{array}{c|c}
                          \vect g H \in Z^{G,H}
                          \Big. 
                          &
                            H < G,\
                            \pi_{\Sigma_m}(\Stab( \vect g H ) ) \leq \Sigma_m \mbox{ transitive}
                        \end{array}
                  \right\}
                  \arrow[d, "\Stab"]
                  \\
                  \left\{
                        \begin{array}{c|c}
                          [\Sigma_q \wr_{\underline{n}} \Gamma(a_{S/K}) \leq G \times \Sigma_m]
                          \Big.
                          &
                            m = nq,\
                            K < S \leq G,\
                            [S : K] = n \neq 1
                        \end{array}
                  \right\},
            \end{tikzcd}
      \end{equation}
      where
      $\Sigma_q \wr_{\underline{n}} \Gamma(a_{S/K})$ is defined as in \eqref{WRG_EQ}
      and $[\Sigma_q \wr_{\underline{n}} \Gamma(a_{S/K})]$ denotes the $\Sigma_m$-conjugacy class of the subgroup in $G \times \Sigma_m$.
\end{proposition}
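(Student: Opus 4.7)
The plan is to establish both the well-definedness of the stabilizer map and the surjectivity by producing an explicit preimage for each conjugacy class in the target. Since the statement only asserts surjectivity, I will focus on the latter; well-definedness falls out of the same analysis.

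To set up the well-definedness, I would begin with a tuple $\vect g H = (g_1 H, \ldots, g_m H) \in Z^{G,H}$ whose stabilizer has transitive image in $\Sigma_m$. Any element $(g,\sigma)$ of the stabilizer satisfies $g \cdot g_i H = g_{\sigma(i)} H$, so $g$ permutes the distinct cosets appearing in the tuple while $\sigma$ permutes the fibers accordingly. Transitivity of the $\Sigma_m$-projection forces every distinct coset to appear the same number of times, say $q$, with $n$ distinct cosets and $m = nq$. Conjugating by a suitable permutation in $\Sigma_m$, I may assume the tuple has the $q$-fold shuffle form $\vect c^{\** q}$ for some $\vect c = (c_1, \ldots, c_n)$ of pairwise distinct cosets; this operation does not change the $\Sigma_m$-conjugacy class of the stabilizer. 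Then \cref{GRAPHSTAB_LEM}(ii) identifies the stabilizer directly as $\Sigma_q \wr_{\underline n} \Gamma(a_Y)$, where $Y = \set{c_1, \ldots, c_n}$.

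Next I would rewrite $Y$ as a coset space. Writing $c_1 = g_1 H$ and setting $K \coloneqq \Stab_{S_Y}(c_1) = g_1 H g_1^{-1} \cap S_Y$ via \cref{KMODH_EX}, transitivity of $\pi_{\Sigma_m}(\Stab)$ on $\underline m$ combined with the fact that the wreath product projects to $\Sigma_n$ through $a_Y$ forces $a_Y(S_Y)$ to be transitive on $\underline n$. Consequently $S_Y$ acts transitively on $Y$ and there is an $S_Y$-equivariant bijection $S_Y/K \cong Y$ which pulls the given ordering of $Y$ back to an ordering of $S_Y/K$, identifying $a_Y$ with $a_{S_Y/K}$. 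Setting $S \coloneqq S_Y$ yields $K < S \leq G$ (with strictness since $n \neq 1$) and $[S:K] = n$, placing the stabilizer conjugacy class in the target set.

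For the surjectivity itself, given $[\Sigma_q \wr_{\underline n} \Gamma(a_{S/K})]$ with $m = nq$, $K < S \leq G$, and $[S:K] = n \neq 1$, I would take $H \coloneqq K$ (a proper subgroup of $G$ since $K < S \leq G$), choose any ordering of the cosets $S/K$ to produce $\vect c \in (G/K)^{\times n}$, and form the $q$-fold shuffle $\vect c^{\** q} \in (G/K)^{\times m}$. Since $n \geq 2$ this tuple lies in $Z^{G,K}$, and \cref{GRAPHSTAB_LEM}(ii) computes its stabilizer to be exactly $\Sigma_q \wr_{\underline n} \Gamma(a_{S/K})$. Transitivity of the $\Sigma_m$-projection follows because $a_{S/K}$ is transitive on the $n$ blocks and the normal $\Sigma_q^{n}$-factor is transitive on each block of size $q$.

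The main bookkeeping obstacle is that the definition of $a_Y$ (and of $a_{S/K}$) depends on a choice of ordering; different orderings produce $\Sigma_n$-conjugate homomorphisms and hence $\Sigma_n$-conjugate graph subgroups. This is precisely the ambiguity already absorbed by working with $\Sigma_m$-conjugacy classes in the target, as noted in \cref{NEWSTAB_REM}, so the argument is independent of all such choices.
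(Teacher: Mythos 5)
Your proposal is correct and follows essentially the same route as the paper: reduce a tuple with transitive stabilizer image to a $q$-fold shuffle of distinct cosets, apply \cref{GRAPHSTAB_LEM} to identify the stabilizer as $\Sigma_q \wr_{\underline{n}} \Gamma(a_Y)$, and construct the section by sending $\Sigma_q \wr_{\underline{n}} \Gamma(a_{S/K})$ to the shuffle $(S/K)^{\** q}$ via \cref{KMODH_EX}. Your write-up is somewhat more explicit than the paper's (which defers the section argument to the proof of \cref{FATSURJ_PROP}), but the ideas are identical.
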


\begin{proof}
      Suppose given $\vect g H$ in $Z^{G,H} \subset (G/H)^{\times m}$.
      For  $(g, \sigma) \in \Stab_{G \times \Sigma_m}(\vect g H)$, we have
      \[
            g_i H = g_j H \Leftrightarrow
            g \cdot g_i H = g \cdot g_j H \Leftrightarrow
            g_{\sigma(i)} H = g_{\sigma(j)} H.
      \]
      Since  $\pi_{\Sigma_m}(\Stab( \vect g H ) )$ is a transitive subgroup of $\Sigma_m$, we conclude that
      after  reordering if necessary, 
      $\vect g H$ is the $q$-fold shuffle of $(g_1H,\dots,g_n H)$, where the cosets
      $g_1H$, \dots, $g_n H$ are distinct.
      Using both parts of \cref{GRAPHSTAB_LEM}, we see that
      \[
            \Stab_{G \times \Sigma_m}( \vect g H ) = \Sigma_q \wr_{\underline{n}} \Stab_{G \times \Sigma_n}((g_1H,\dots,g_n H)),
      \]
      and so the arrow \eqref{STABSURJ_EQ} is well-defined.

      We wish to show that \eqref{STABSURJ_EQ} is surjective.
      A choice of $a_{S/K} \colon S \to \Sigma_n$ specifies an ordering of $S/K$.
      Given $\Sigma_q \wr_{\underline{n}} \Gamma(a_{S/K})$, 
      the assignment
      \[
           \zeta\colon \Sigma_q \wr_{\underline{n}} \Gamma(a_{S/K}) \longmapsto (S/K)^{\** q},
      \]
      is a section of \eqref{STABSURJ_EQ} by an argument similar to that used in the proof of \cref{FATSURJ_PROP}.
\end{proof}

\subsection{The relative ideal $J_H^G$ for normal subgroups $H \trianglelefteq G$}
\label{NORMAL_SEC}

The relative transfer ideal $J_H^G$ was defined in \cref{JHG_DEF} and appears in \cref{mainthm}.
\cref{FATSURJ_PROP,STABSURJ_PROP} can be used to obtain a description 
of the subgroups that we must transfer along  to form $J_H^G$
in the case that $H$ is normal in $G$.

\begin{prop}
      \label{FATNORMSURJ_COR}
      Fix $n \geq 1$ and $H \triangleleft G$ normal. The map \eqref{FATSURJ_EQ} restricts to give a surjection
      \begin{equation}
            \label{FATNORMSURJ_EQ}
            \begin{tikzcd}[ampersand replacement = \&]
                  \left\{
                        \begin{array}{c|c}
                          \vect g H \in (G/H)^{\times n} \setminus \Delta^{\mathrm{fat}}(G/H)
                          \Big.
                          &
                            \pi_{\Sigma_n}(\Stab( \vect g H)) \leq \Sigma_n \mbox{ transitive}
                        \end{array}
                  \right\}
                  \arrow[d, "\Stab"]
                  \\
                  \left\{
                        \begin{array}{c|c} 
                       [   \Gamma(a_{S/H}) \leq G \times \Sigma_n ]
                           \Big.
                          &
                            H \leq S \leq G,\ [S : H] = n
                        \end{array}
                  \right\},
            \end{tikzcd}
      \end{equation}
      where $[-]$ denotes the $\Sigma_n$-conjugacy class of the subgroup in $G \times \Sigma_n$.
\end{prop}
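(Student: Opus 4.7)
The plan is to deduce \cref{FATNORMSURJ_COR} from \cref{FATSURJ_PROP} by verifying that, when $H \triangleleft G$ is normal, the subgroup $K = \Stab_S(1)$ arising in the section $\zeta$ from the proof of \cref{FATSURJ_PROP} is forced to equal $H$. This promotes $\zeta$ to a section already landing in $(G/H)^{\times n}$, and conversely identifies every relevant stabilizer with a $\Sigma_n$-conjugate of some $\Gamma(a_{S/H})$ for $H \leq S \leq G$.

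First I would verify well-definedness of the restricted $\Stab$ map. Let $\vect g H = (g_1H,\dots,g_nH)$ be in $(G/H)^{\times n}\setminus \Delta^{\mathrm{fat}}(G/H)$ with $\pi_{\Sigma_n}(\Stab(\vect g H))$ transitive, and set $Y = \{g_1H,\dots,g_nH\}$. Normality gives $h\cdot g_iH = g_i(g_i^{-1}hg_i)H = g_iH$ for every $h\in H$, so $H\subseteq S_Y$. By \cref{KMODH_EX}, the stabilizer of $g_1H$ in $S_Y$ is $S_Y \cap g_1 H g_1^{-1} = S_Y \cap H = H$. Thus $Y \cong S_Y/H$ as $S_Y$-sets, and after choosing an ordering \cref{GRAPHSTAB_LEM}(i) identifies $\Stab_{G\times \Sigma_n}(\vect g H)$ with a $\Sigma_n$-conjugate of $\Gamma(a_{S_Y/H})$, where $H \leq S_Y \leq G$ and $[S_Y:H]=n$ by orbit-stabilizer.

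For surjectivity, given $H \leq S \leq G$ with $[S:H] = n$, I choose an ordering $S/H = (s_1H,\dots,s_nH)$ and observe that the resulting tuple lies in $(G/H)^{\times n}$, has distinct components (so avoids $\Delta^{\mathrm{fat}}(G/H)$), and by \cref{KMODH_EX} has stabilizer exactly $\Gamma(a_{S/H})$; since $S$ acts transitively on $S/H$ the image of $a_{S/H}$ in $\Sigma_n$ is transitive, so the tuple lies in the source of \eqref{FATNORMSURJ_EQ} and maps to $[\Gamma(a_{S/H})]$.

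The only real subtlety is the identification $K = H$, and normality is used for it twice: to ensure $H\subseteq S_Y$, and then to identify $g_1 H g_1^{-1}$ with $H$. In the non-normal setting of \cref{S_EXAMPLE}, the first of these can fail---conjugation moves $H$ to a different subgroup of $G$---so $K$ can be strictly smaller than $H$ and no analogous section exists with $H$ fixed.
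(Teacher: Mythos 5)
Your proposal is correct and follows essentially the same route as the paper: the whole content is that normality forces the subgroup $K=\Stab_S(g_1H)$ from the section $\zeta$ of \cref{FATSURJ_PROP} to equal $H$, so that $\zeta$ restricts and the stabilizers are exactly the $[\Gamma(a_{S/H})]$. You spell out the two uses of normality ($H\subseteq S_Y$ and $g_1Hg_1^{-1}=H$) slightly more explicitly than the paper does, but the argument is the same.
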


\begin{proof}
      Since $H$ is normal, the $G$-stabilizer of each $g_iH \in G/H$ is $H$. 
      Thus, if $S$ denotes the set-wise stabilizer of $\vect{g}H\subset G/H$ and $K\leq S$ denotes 
      the stabilizer in $S$ of $g_1 H\in G/H$,
      then $K=H$.
      Furthermore, since $K=H$, the section $\zeta$ of \eqref{FATSURJ_EQ} restricts to a section for fixed, normal $H$.
      It follows that \cref{FATNORMSURJ_EQ} is surjective.
\end{proof}

The next result is an analogue of \cref{STABSURJ_PROP}.

\begin{proposition}
      \label{NORMSURJ_COR}
      Fix a normal subgroup $H \triangleleft G$.
      Then the assignment of the stabilizer \eqref{STABSURJ_EQ} restricts to a surjection
      \begin{equation}
            \label{NORMSURJ_EQ}
            \begin{tikzcd}[ampersand replacement = \&]
                  \left\{
                        \begin{array}{c|c}
                          \vect gH \in Z^{G,H}
                          \Big.
                          &
                            \pi_{\Sigma_m}(\Stab(\vect g H)) \leq \Sigma_m \mbox{ transitive}
                        \end{array}
                  \right\}
                  \arrow[d, "\Stab"]
                  \\
                  \left\{
                        \begin{array}{c|c}
                          [\Sigma_q \wr_{\underline{n}} \Gamma(a_{S/H}) \leq G \times \Sigma_m]
                          \Big.
                          &
                            m = nq,\
                            H < S \leq G,\
                            [S:H]= n \neq 1
                        \end{array}
                  \right\},
            \end{tikzcd}
      \end{equation}
      where
      $\Sigma_q \wr_{\underline{n}} \Gamma(\phi)$ is defined as in \cref{WRG_EQ}
      and $[-]$ denotes the $\Sigma_m$-conjugacy class of the subgroup in $G \times \Sigma_m$.
\end{proposition}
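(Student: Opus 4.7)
The plan is to derive \cref{NORMSURJ_COR} by combining \cref{STABSURJ_PROP}, which gives the analogous description without fixing $H$, with \cref{FATNORMSURJ_COR}, which identifies the stabilizers of transitive, non-fat-diagonal tuples in $(G/H)^{\times n}$ when $H$ is normal. The normality of $H$ is what allows the auxiliary subgroup $K$ appearing in \cref{STABSURJ_PROP} to collapse to $H$.

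For well-definedness of the target, I would start with $\vect gH \in Z^{G,H}$ having transitive image in $\Sigma_m$. The argument from the proof of \cref{STABSURJ_PROP} shows that, after permuting coordinates, $\vect gH = (g_1H,\dots,g_nH)^{\** q}$ for distinct cosets $g_1H,\dots,g_nH$ and some factorization $m=nq$, with
\[
\Stab_{G\times\Sigma_m}(\vect gH) = \Sigma_q \wr_{\underline{n}} \Stab_{G\times\Sigma_n}((g_1H,\dots,g_nH)).
\]
The inner tuple $(g_1H,\dots,g_nH)$ has transitive image in $\Sigma_n$ and lies off $\Delta^{\mathrm{fat}}(G/H)$, so \cref{FATNORMSURJ_COR} identifies its stabilizer as $\Gamma(a_{S/H})$ for some $H\leq S\leq G$ with $[S:H]=n$. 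Since $\vect gH \notin \Delta(G/H)$, we have $n\geq 2$, and hence $H<S$. This exhibits $\Stab_{G\times\Sigma_m}(\vect gH)$ in the stated form $\Sigma_q\wr_{\underline{n}}\Gamma(a_{S/H})$.

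For surjectivity, I would imitate the section construction of \cref{STABSURJ_PROP}: send $\Sigma_q\wr_{\underline{n}}\Gamma(a_{S/H})$ to the $q$-fold shuffle $(S/H)^{\** q}\in (G/H)^{\times m}$. By \cref{KMODH_EX}, the tuple $S/H\in (G/H)^{\times n}$ has stabilizer $\Gamma(a_{S/H})$, and then \cref{GRAPHSTAB_LEM}(ii) gives $\Stab_{G\times\Sigma_m}((S/H)^{\** q}) = \Sigma_q\wr_{\underline{n}}\Gamma(a_{S/H})$. The image of this stabilizer in $\Sigma_m$ contains $\Sigma_q\wr \im(a_{S/H})$, which is transitive on $\underline{m}=\underline{qn}$ because $\im(a_{S/H})$ acts transitively on $\underline{n}$. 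Moreover, since $[S:H]=n\neq 1$, the $n$ cosets of $S/H$ are pairwise distinct, so $(S/H)^{\** q}$ lies in $Z^{G,H}$ rather than in $\Delta(G/H)$.

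The only point requiring care, though it is not a serious obstacle, is the passage to $\Sigma_m$-conjugacy classes in the target: different orderings $S/H\cong\underline{n}$ change $a_{S/H}$ by $\Sigma_n$-conjugation and hence change $\Sigma_q\wr_{\underline{n}}\Gamma(a_{S/H})$ by conjugation via $\Sigma_n \hookrightarrow \Sigma_q\wr\Sigma_n \leq \Sigma_m$, so the assignment $\zeta$ and the stabilizer map are both well-defined on conjugacy classes, exactly as spelled out in \cref{NEWSTAB_REM}.
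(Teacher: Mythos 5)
Your proof is correct and follows exactly the route the paper intends: the paper states \cref{NORMSURJ_COR} without proof as an ``analogue of \cref{STABSURJ_PROP},'' to be obtained by combining the shuffle decomposition and section $\zeta$ from \cref{STABSURJ_PROP} with the observation in \cref{FATNORMSURJ_COR} that normality of $H$ forces the point stabilizer $K$ to equal $H$. Your writeup supplies the details (including transitivity of $\Sigma_q\wr\im(a_{S/H})$ and the conjugacy-class bookkeeping) that the paper leaves implicit; nothing is missing.
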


\subsection{Specializing to a prime}
\label{PRIME_SEC}

In this section, we give more explicit identifications of the subgroups of $G \times \Sigma_p$ which appear in $J^G$ and $J_H^G$, where $p$ is a prime.
We identify the relevant tuples in $Z^{G,H}$ (\cref{LongCycleStab})
and give closed-form descriptions of their stabilizers.

Working at a prime has the advantage that transitive subgroups of $\Sigma_p$ are exactly those which contain 
a $p$-cycle $\sigma_p$.
This is an immediate consequence of Cauchy's theorem.

We first classify, for general $m$, those tuples $\vect gH \in (G/H)^{\times m}$ 
for which $\pi_{\Sigma_m}(\mathrm{Stab}(\vect g H))$ contains a long cycle.

\begin{prop}
      \label{LongCycleStab}
      Assume that $\vect {g}H = (eH, g_1H, \ldots, g_{m-1}H)$, and let $\sigma_m = (1\ 2 \dots \ m)$ be the long cycle.
      Then $(g, \sigma_m)$ 
      lies in $\Stab(\vect{g}H)$ if and only if $g^m \in H$ and
      \[
            \vect{g}H = (eH, gH, g^2H, \ldots, g^{m-1}H).
      \]
\end{prop}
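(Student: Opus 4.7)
The plan is to unpack the stabilizer condition entrywise and solve the resulting recursion.

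First I would fix the convention that a permutation $\sigma \in \Sigma_m$ acts on an $m$-tuple by $(\sigma \cdot \vec{x})_i = x_{\sigma^{-1}(i)}$, so that $(g,\sigma) \cdot \vec{x} = (g x_{\sigma^{-1}(1)}, \dots, g x_{\sigma^{-1}(m)})$. Since $\sigma_m = (1\,2\,\cdots\,m)$, we have $\sigma_m^{-1}(i) = i-1 \pmod m$ (with $\sigma_m^{-1}(1) = m$). Writing $\vec{g}H = (x_1,\dots,x_m)$ with $x_1 = eH$ and $x_{i+1} = g_i H$ for $1 \le i \le m-1$, the action formula gives
\[
(g,\sigma_m)\cdot \vec{g}H \;=\; (g\cdot g_{m-1}H,\ g\cdot eH,\ g\cdot g_1 H,\ \dots,\ g\cdot g_{m-2}H).
\]

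Next, I would extract the conditions forced by requiring this tuple to equal $\vec{g}H$. Comparing the second entry gives $g_1 H = gH$. Comparing the $(i{+}1)$st entry for $1 \le i \le m-1$ gives the recursion $g_i H = g \cdot g_{i-1} H$, so by induction $g_i H = g^i H$ for every $1 \le i \le m-1$. Comparing the first entry then yields $g \cdot g_{m-1} H = eH$, which, combined with $g_{m-1}H = g^{m-1}H$, becomes $g^m H = eH$, i.e.\ $g^m \in H$. This establishes the ``only if'' direction.

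For the converse, if $g^m \in H$ and $\vec{g}H = (eH, gH, g^2 H, \dots, g^{m-1}H)$, then the action formula above yields
\[
(g,\sigma_m)\cdot\vec{g}H \;=\; (g \cdot g^{m-1}H,\ gH,\ g^2 H,\ \dots,\ g^{m-1}H) \;=\; (eH,\ gH,\ g^2 H,\ \dots,\ g^{m-1}H),
\]
using $g^m \in H$ to identify the first coordinate, and this agrees with $\vec{g}H$.

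There is no real obstacle here — the argument is a careful bookkeeping of indices under the prescribed cyclic action. The only thing to be slightly cautious about is the index convention for the $\Sigma_m$-action on tuples, since an off-by-one or left-versus-right choice would reverse the cycle; fixing the convention at the outset removes any ambiguity.
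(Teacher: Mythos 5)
Your proof is correct and follows essentially the same route as the paper: both unpack the stabilizer condition entrywise under the convention $((g,\sigma)\cdot\vec x)_i = g\,x_{\sigma^{-1}(i)}$, derive $g_iH = g^iH$ by the resulting recursion, and read off $g^m\in H$ from the wrap-around entry, with the converse being the same direct computation. Your version is marginally cleaner in working directly with cosets rather than choosing representatives $h_i\in H$ as the paper does, but the substance is identical.
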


\begin{proof}
      First we will prove the forward direction. By direct observation, we see
      \[
            g \in (Hg_{m-1}^{-1}) \cap (g_{m-1}Hg_{m-2}^{-1}) \cap \ldots \cap (g_1H).
      \]
      Thus, there exists $h_i \in H$ such that $g = g_ih_ig_{i-1}^{-1}$ when $0 < i \leq m$
      (where we set $g_0 = g_m = e$).
      With this convention we see that
      \[
            g^iH = g_ih_ig_{i-1}^{-1}\cdot g_{i-1}h_{i-1}g_{i-2}^{-1} \cdots g_1H = g_iH.
      \]
      Thus we have that $(g,\sigma_m)$ stabilizes $\vect g H = (eH, gH, g^2H, \ldots, g^{m-1}H)$,
      so  $g^m \in H$.
      
      For the reverse direction, it suffices to note that under the condition that $g^m$ is  in $H$, 
      \[\begin{split}
        (g, \sigma_m) \cdot (eH, gH, g^2H, \ldots, g^{m-1}H) &= (gg^{m-1}H, geH, ggH, \ldots, gg^{m-2}H) \\ &= (eH, gH, g^2H, \ldots, g^{m-1}H).
     \qedhere\end{split} \]
\end{proof}

\begin{corollary}
      Using the notation of \cref{FATSURJ_PROP}, the subgroup $\pi_{\Sigma_p} (\Stab(\vect g H))\leq \Sigma_p$ 
      is transitive if and only if $\vect gH \in (G/H)^{\times p}$ lies in the same 
      $G\times \Sigma_p$-orbit as the $p$-tuple $(g^iH) = (eH, gH, g^2 H, \dots g^{p-1}H)$ for some $g \in G$ such that $g^p\in H$.
\end{corollary}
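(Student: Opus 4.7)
The plan is to reduce the statement directly to \cref{LongCycleStab}, exploiting the fact that $p$ is prime.

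First I would handle the easy direction. Suppose $\vect{g}H$ lies in the $G \times \Sigma_p$-orbit of $(eH, gH, g^2 H, \dots, g^{p-1}H)$ for some $g \in G$ with $g^p \in H$. By \cref{LongCycleStab}, the element $(g, \sigma_p)$ stabilizes $(eH, gH, \dots, g^{p-1}H)$, so its $\Stab_{G \times \Sigma_p}$ projects to a subgroup of $\Sigma_p$ containing the $p$-cycle $\sigma_p$. Since conjugation (by the $G \times \Sigma_p$-element moving $\vect{g}H$ to this tuple) preserves transitivity of the image in $\Sigma_p$, and since $\langle \sigma_p \rangle$ already acts transitively on $\{1, \dots, p\}$, the image $\pi_{\Sigma_p}(\Stab(\vect{g}H))$ is transitive.

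For the forward direction, suppose $\pi_{\Sigma_p}(\Stab(\vect{g}H))$ is transitive. Because $p$ is prime, any transitive subgroup $T \leq \Sigma_p$ has order divisible by $p$ (orbit–stabilizer), and so by Cauchy's theorem $T$ contains an element of order $p$, necessarily a $p$-cycle. Lift such a $p$-cycle to some $(g', \tau) \in \Stab(\vect{g}H)$ with $\tau$ a $p$-cycle. After acting by a suitable $\rho \in \Sigma_p$ we may assume $\rho \tau \rho^{-1} = \sigma_p = (1\,2\,\dots\,p)$, and after then acting by a suitable element of $G$ we may assume the first entry of the tuple is $eH$. These operations replace $\vect{g}H$ by a tuple in its $G \times \Sigma_p$-orbit whose stabilizer contains $(g'', \sigma_p)$ for some $g'' \in G$. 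Now \cref{LongCycleStab} applies and shows $(g'')^p \in H$ and the tuple equals $(eH, g''H, (g'')^2 H, \dots, (g'')^{p-1}H)$, giving the desired orbit representative.

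The only subtle point is the forward direction, where one must argue that the existence of a $p$-cycle in the image of the stabilizer can actually be realized by a lift whose $G$-component we can then bring into the form required by \cref{LongCycleStab}; both steps are just conjugating by elements of $G \times \Sigma_p$, which preserves the orbit. Once this is in place, \cref{LongCycleStab} does all the remaining work.
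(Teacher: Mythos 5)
Your proof is correct and follows exactly the route the paper intends: the corollary is left without explicit proof there precisely because it is the combination of \cref{LongCycleStab} with the observation (via Cauchy's theorem) that transitive subgroups of $\Sigma_p$ are exactly those containing a $p$-cycle. Your care in noting that the normalizing moves (conjugating the $p$-cycle to $\sigma_p$ and translating the first entry to $eH$) stay within the $G\times\Sigma_p$-orbit is exactly the point that makes the reduction to \cref{LongCycleStab} legitimate.
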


\cref{FATSURJ_PROP} then specializes to the following.
\begin{corollary}
      \label{PRIMESURJ_COR}
      Fix a prime $p$. Then the assignment of the stabilizer $\Stab_{G \times \Sigma_p}(-)$ gives a surjection 
      \begin{equation}
            \label{PRIMESURJ_EQ}
            \begin{tikzcd}[ampersand replacement=\&, column sep=small]
                  \left\{
                        \begin{array}{c|c}
                          (g^iH) \in Z^{G,H}\subset  (G/H)^{\times p}
                          &
                            \Big.
                            {H < G,\  g^p \in H,\ g \notin H}
                        \end{array}
                  \right\} 
                  \arrow[d, "\Stab"] 
                  \\
                  \left\{
                        \begin{array}{c|c}
                          [    \Gamma(a_{S/K}) 
                          \leq G \times \Sigma_p  ]
                          &
                            \Big.
                            {K < S \leq G,\ [S:K] = p}
                        \end{array}
                  \right\},
            \end{tikzcd}
      \end{equation}
where $[-]$ denotes the $\Sigma_p$-conjugacy class of the subgroup in $G \times \Sigma_p$.
\end{corollary}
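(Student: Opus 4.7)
The plan is to derive this corollary as a direct specialization of Proposition \ref{STABSURJ_PROP} to the prime case, using Cauchy's theorem and Proposition \ref{LongCycleStab} to obtain the more explicit description of tuples on the left-hand side.

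First I would simplify the right-hand side of \eqref{STABSURJ_EQ}. Since $m = p$ is prime, the only factorization $m = nq$ with $n \neq 1$ is $n = p$, $q = 1$. In this case $\Sigma_q \wr_{\underline{n}} \Gamma(a_{S/K}) = \Gamma(a_{S/K})$, with $[S:K] = p$. This recovers the codomain in \eqref{PRIMESURJ_EQ}.

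Next I would simplify the left-hand side. By Cauchy's theorem, a transitive subgroup of $\Sigma_p$ contains a $p$-cycle, and after acting by $\Sigma_p$ we may assume it contains the standard long cycle $\sigma_p = (1\ 2\ \dots\ p)$. Then by Proposition \ref{LongCycleStab} and its corollary, every tuple $\vect g H \in Z^{G,H}$ whose stabilizer has transitive image in $\Sigma_p$ is $G \times \Sigma_p$-equivalent to $(eH, gH, g^2H, \dots, g^{p-1}H)$ for some $g \in G$ with $g^p \in H$. The condition $\vect g H \notin \Delta(G/H)$ translates exactly to $gH \neq eH$, i.e.\ $g \notin H$. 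This identifies the domain of \eqref{PRIMESURJ_EQ} as a set of representatives for the $G \times \Sigma_p$-orbits of tuples appearing in the source of \eqref{STABSURJ_EQ}. Well-definedness of the stabilizer assignment into the specified codomain follows from Proposition \ref{STABSURJ_PROP}.

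Finally, for surjectivity, given a class $[\Gamma(a_{S/K})]$ with $K < S \leq G$ and $[S:K] = p$, I would construct an explicit preimage. Choose $H := K$ and any $g \in S \setminus K$. Since the quotient $S/K$ has prime order $p$, it is cyclic generated by $gK$, so $g^p \in K = H$ and $S/K = \{eH, gH, g^2H, \dots, g^{p-1}H\}$. By Example \ref{KMODH_EX}, the setwise stabilizer of this set in $G$ is $S$, and by Lemma \ref{GRAPHSTAB_LEM}(i) the stabilizer of the tuple $(eH, gH, \dots, g^{p-1}H)$ in $G \times \Sigma_p$ is $\Gamma(a_{S/K})$, as required.

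I do not anticipate a serious obstacle here, as the corollary essentially packages three previously-established facts: the numerical simplification for $n \cdot q = p$, the classification of long-cycle stabilizers from Proposition \ref{LongCycleStab}, and the stabilizer-to-subgroup correspondence from Proposition \ref{STABSURJ_PROP}. The only mildly subtle point is that the choice of $H$ on the domain must be allowed to vary with $K$ (specifically $H = K$) to realize all graph subgroups on the codomain; this is permitted by the formulation of \eqref{PRIMESURJ_EQ}.
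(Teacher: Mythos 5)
Your overall route---specializing \cref{STABSURJ_PROP} to the unique factorization $n=p$, $q=1$, and using Cauchy's theorem together with \cref{LongCycleStab} to identify the domain with orbit representatives---is exactly how the paper derives this corollary, and your first two paragraphs are correct. The flaw is in the surjectivity step: the claim that \emph{any} $g\in S\setminus K$ satisfies $g^p\in K$ with $S/K=\{eK,gK,\dots,g^{p-1}K\}$ is false when $K$ is not normal in $S$, since $S/K$ is then only a $p$-element coset space and not a cyclic group. Concretely, take $S=\Sigma_3$, $K=\langle(12)\rangle$, $p=3$, $g=(13)$: then $g^3=(13)\notin K$ and $\{eK,gK,g^2K\}$ contains only two distinct cosets. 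Recall that the discussion around \cref{S_EXAMPLE} is precisely a warning that the relevant subgroups need not be normal, so this case cannot be waved away.

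The repair is the same Cauchy argument you already invoked for the domain: $\im(a_{S/K})\leq\Sigma_p$ is transitive, hence contains a $p$-cycle $\sigma$, and any $g\in a_{S/K}^{-1}(\sigma)$ cyclically permutes the $p$ cosets, giving $g\notin K$, $g^p\in K$, and $S/K=\{eK,gK,\dots,g^{p-1}K\}$; then \cref{KMODH_EX} and \cref{GRAPHSTAB_LEM}(i) show that the tuple $(g^iK)$ has stabilizer $\Gamma(a_{S/K})$ up to $\Sigma_p$-conjugacy, as you intended. Alternatively, note that surjectivity is already a formal consequence of your second paragraph: \cref{STABSURJ_PROP} supplies \emph{some} preimage tuple with transitive stabilizer image, and the corollary to \cref{LongCycleStab} places it in the same $G\times\Sigma_p$-orbit as a tuple of the form $(g^iH)$, whose stabilizer is therefore conjugate to $\Gamma(a_{S/K})$; this makes the explicit construction unnecessary.
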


\begin{remark}
The codomain of \cref{PRIMESURJ_EQ} can be described more simply as 
\[
      \left\{
            \begin{array}{c|c}
            [  \Gamma( S \xrtarr{a} \Sigma_p) \leq G \times \Sigma_p. ]
              \Big.
              &
                S\leq G, \text{$\im(a)$ contains a $p$-cycle}
            \end{array}
      \right\}
\]
\end{remark}

\cref{S_EXAMPLE} shows that we still cannot restrict to a fixed $H < G$ on either side for general subgroups $H$.
However, a cleaner description of $\Gamma(a_{S/K})$ does occur for $H$ normal in  $G$.

\begin{notation}
      For $\vect g H = (g^i H) \in (G/H)^{\times p}$ with $g^p \in H$ and $H \triangleleft G$ normal,
      we write $a_g \colon S_g \to \Sigma_m$ for the action map
      (so $a_g = a_{(g^i H)}$, $S_g = S_{(g^i H)}$).
\end{notation}

The following result is a specialization of \cref{FATNORMSURJ_COR}.

\begin{corollary}
\label{PRIME_NORMSURJ_COR}
      Fix a prime $p$ and a normal subgroup $H \trianglelefteq G$.
      The assignment $(g^i H) \mapsto \Stab_{G\times \Sigma_p}\big((g^i H)\big) = \Gamma(a_g)$ induces a surjection
            \begin{equation}
            \label{NORMPRIMESURJ_EQ}
            \begin{tikzcd}[ampersand replacement=\&]
                  \left\{
                        \begin{array}{c|c}
                          (g^iH) \in Z^{G,H} \subset (G/H)^{\times p}
                          \Big.
                          &
                           {  g^p \in H,\ g \notin H}
                                                   \end{array}
                  \right\}
                  \arrow[d, "\Stab"]
                  \\
                  \left\{
                        \begin{array}{c|c}
[                       \Gamma(a_{S/H}) 
                          \leq G \times \Sigma_p ]
                          \Big.
                          &
                            {H < S \leq G,\ [S:H] = p}
                        \end{array}
                  \right\}.
            \end{tikzcd}
      \end{equation}
      Moreover, the action map $a_g$ is precisely
      \[
            a_g \colon \langle H,g \rangle \longto C_p \leq \Sigma_p,
            \qquad
            g^i H \longmapsto \sigma_p^i
      \]
      with $\sigma_p = (1\ 2\ \dots p)$ the long cycle.
\end{corollary}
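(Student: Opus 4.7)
The plan is to deduce \cref{PRIME_NORMSURJ_COR} as a direct specialization of \cref{NORMSURJ_COR} to $m = p$, combined with \cref{LongCycleStab} to pin down the explicit form of the tuples and an elementary argument with normal subgroups to identify the action map.

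First, I would set $m = p$ in \cref{NORMSURJ_COR}. The condition $m = nq$ with $n \neq 1$ then forces $n = p$ and $q = 1$, so the wreath product $\Sigma_q \wr_{\underline{n}} \Gamma(a_{S/H})$ collapses to $\Gamma(a_{S/H})$ with $[S:H] = p$. This already matches the codomain of \eqref{NORMPRIMESURJ_EQ}, and the surjectivity of the stabilizer assignment is inherited from \cref{NORMSURJ_COR}.

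To match the domain, I would use that a transitive subgroup of $\Sigma_p$ necessarily contains a $p$-cycle (by Cauchy's theorem). After acting by a suitable element of $\Sigma_p$ on the tuple, I may assume this $p$-cycle is the long cycle $\sigma_p$. \cref{LongCycleStab} then expresses such a tuple in the form $(eH, gH, \ldots, g^{p-1}H)$ with $g^p \in H$; the complementary condition $\vect gH \in Z^{G,H}$ (i.e., the tuple is not on the diagonal) amounts to $gH \neq H$, that is, $g \notin H$.

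Finally, to describe the action map, I would observe that because $H$ is normal, the set $\{eH, gH, \ldots, g^{p-1}H\}$ coincides with the cyclic subgroup $\langle gH\rangle \leq G/H$, which has order $p$ since $g \notin H$ and $g^p \in H$. A subgroup of a group is its own set-wise stabilizer under left multiplication, so the set-wise stabilizer in $G/H$ of this set is $\langle gH\rangle$ itself, whose preimage in $G$ is $\langle H, g\rangle$. Hence $S_g = \langle H, g\rangle$, and $[S_g : H] = p$. Each element of $H$ fixes every coset $g^iH$ by normality, while $g$ cyclically shifts the cosets $g^iH \mapsto g^{i+1}H$, which is exactly $\sigma_p$ in the chosen ordering; thus $a_g$ descends through $\langle H,g\rangle/H \cong C_p$ and sends $gH \mapsto \sigma_p$. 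There is no substantive obstacle; the content is essentially a reformulation of \cref{NORMSURJ_COR} for $m=p$ together with the observation that the cosets of a normal subgroup form a group.
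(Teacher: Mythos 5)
Your proposal is correct and follows essentially the same route as the paper: the surjection is the $m=p$ specialization of \cref{NORMSURJ_COR} (equivalently \cref{FATNORMSURJ_COR}, since $n=p$, $q=1$ collapses the wreath product), and the action map is identified by observing that the set-wise stabilizer of $\{g^iH\}$ is $\langle H,g\rangle$ with $\langle H,g\rangle/H\cong C_p$ acting by cyclic shift. Your additional use of \cref{LongCycleStab} to characterize the domain is exactly what the paper does implicitly via \cref{PRIMESURJ_COR}, so there is no substantive difference.
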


\begin{proof}
It remains to describe the action map $a_g$.
First, note that, given $(g^i H)$, the set-wise stabilizer of $(g^i H) \subset G/H$ is the subgroup $\langle H, g\rangle \leq G$.
Thus $S=S_g = \langle H, g\rangle$.
The formula then follows from the fact that $S/H = \langle H,g\rangle/H$ is isomorphic to $C_p$.
\end{proof}

Generally speaking, the action maps $a_{S/K}$ are difficult to understand. However, in \cref{PRIME_NORMSURJ_COR}, we explicitly describe the action map when $H$ is normal in $G$ and $m=p$ is prime. This will be useful later when we compute power operations.

\subsection{The relatively prime case}
\label{RELPRIME_SEC}

In this section, we record conditions on the integers $m$, $|G|$, and $|G/H|$ that force
every component of $Z^{G,H}_{h(G \times \Sigma_m)}$ to factor through  $BG \times B \Sigma_i \times B\Sigma_j$ for some $i,j>0$ with $i+j = m$.

\begin{corollary}
      \label{JGRELPRIME_COR}
      Suppose that $m$ and $|G|$ are relatively prime.
      Then $J^G = I_{\Tr}$.
\end{corollary}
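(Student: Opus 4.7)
The plan is to invoke the explicit description of $J^G$ given in \cref{JG_ACTION_THM} (equivalently, the parametrization of stabilizers from \cref{STABSURJ_PROP}) and observe that the coprimality hypothesis rules out all generators other than those defining $I_{\Tr}$.

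More precisely, according to \cref{JG_ACTION_THM}, $J^G$ is generated by $I_{\Tr}$ together with the images of transfers along subgroups of the form $\Sigma_q \wr_{\underline{n}} \Gamma(a_{S/K}) \leq G \times \Sigma_m$, where $m = nq$ and $K < S \leq G$ satisfy $[S:K] = n \neq 1$. For any such generator, the index $n$ divides $m$ (since $m = nq$) and also divides $|G|$ (since $n = [S:K]$ divides $|S|$, which divides $|G|$ by Lagrange). The hypothesis $\gcd(m,|G|) = 1$ then forces $n = 1$, contradicting the requirement $n \neq 1$.

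Consequently, there are no generators of the second type, and $J^G$ coincides with the ideal generated by transfers along $G \times \Sigma_i \times \Sigma_j \subseteq G \times \Sigma_m$ for $i,j > 0$ with $i+j = m$, which is precisely $I_{\Tr}$. There is no real obstacle here: the argument is a one-line numerical observation once one has in hand the characterization of the extra generators of $J^G$ provided by \cref{STABSURJ_PROP}.
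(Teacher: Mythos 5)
Your proof is correct and is essentially identical to the paper's: both reduce to the description of the extra generators of $J^G$ from \cref{JG_ACTION_THM} (equivalently, the codomain of \eqref{STABSURJ_EQ}) and note that $[S:K]$ would have to be a nontrivial common divisor of $m$ and $|G|$, contradicting coprimality. No differences worth noting.
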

\begin{proof}
      By \cref{JG_ACTION_THM},
      it suffices to show that the codomain of \eqref{STABSURJ_EQ} is empty.
      Suppose not; then
      we would have subgroups $K < S \leq G$ with $[S \colon K] \neq 1$ dividing $m$.
      But $[S : K]$ divides $|S|$ and hence $|G|$, a contradiction.
\end{proof}

When $H \triangleleft G$ is normal, we have the following specification of \cref{JGRELPRIME_COR}.
\begin{corollary}
      Let $H \triangleleft G$ be normal, and suppose $m$ and $|G/H|$ are relatively prime.
      Then $J_H^G = I_{\Tr}$.
\end{corollary}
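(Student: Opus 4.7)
The plan is to directly apply the explicit description of $J_H^G$ provided by \cref{JHG_ACTION_THM}. According to that theorem, $J_H^G$ is generated by $I_{\Tr}$ together with transfers along subgroups of the form $\Sigma_q \wr_{\underline{n}} \Gamma(a_{S/H})$, indexed by factorizations $m = nq$ and intermediate subgroups $H < S \leq G$ with $[S:H] = n \neq 1$. Thus to show $J_H^G = I_{\Tr}$, it suffices to prove that no such data $(n, q, S)$ exist under the hypothesis that $\gcd(m, |G/H|) = 1$.

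To that end, I would suppose for contradiction that there exists such an $S$ with $n = [S:H] \neq 1$ and $n \mid m$ (the latter because $m = nq$). Since $H \triangleleft G$, the quotient $S/H$ is a subgroup of $G/H$, so by Lagrange's theorem $n = |S/H|$ divides $|G/H|$. Combining these divisibility facts gives $n \mid \gcd(m, |G/H|) = 1$, forcing $n = 1$, a contradiction. Hence the indexing set for the additional generators in \cref{JHG_ACTION_THM} is empty, and $J_H^G$ coincides with $I_{\Tr}$.

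This is the exact analogue of the argument given for \cref{JGRELPRIME_COR}, with the role of $|G|$ played by $|G/H|$ and the normality of $H$ ensuring that $S/H$ is an honest subgroup of $G/H$ so that Lagrange applies. There is no genuine obstacle here; the entire content has been absorbed into the group-theoretic description of the stabilizers established in \cref{NORMSURJ_COR}.
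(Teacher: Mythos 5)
Your proof is correct and follows essentially the same route as the paper: invoke \cref{JHG_ACTION_THM} (equivalently, the emptiness of the codomain of \eqref{NORMSURJ_EQ}) and derive a contradiction from the fact that $[S:H]$ would divide both $m$ and $|G/H|$. The only cosmetic difference is that you route the divisibility $[S:H] \mid [G:H]$ through normality and Lagrange on $G/H$, whereas the multiplicativity of indices gives it directly; either way the argument is sound.
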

\begin{proof}
      Similarly, by \cref{JHG_ACTION_THM} it suffices to show that 
      the codomain of \eqref{NORMSURJ_EQ} is empty.
      Suppose not; then there exists $H < S \leq G$ such that $[S : H]$ is larger than 1 and  divides $m$.
      But $[S : H]$ also divides $[G : H] = |G/H|$, a contradiction.
\end{proof}

\begin{remark}
      We record that this result fails if $H$ is not a normal subgroup.
      Consider $G = \Sigma_3$ with $H = \{e, (12)\}$ so that $|G/H| = 3$, and let $m=2$.
      We note that $((13), (12)) \in G \times \Sigma_2$ is in the stabilizer of $(eH, (13)H) \in (G/H)^{\times 2}$.
      Therefore, letting $\Gamma\leq G\times \Sigma_2$ be the order two subgroup generated by the element $((13), (12))$,
      the ideal $J_H^G$ contains the image of the transfer along $\Gamma \rtarr G\times \Sigma_2$, which is
      not contained in $I_{\Tr}$.
\end{remark}


\section{Additive power operations and Green functors}
\label{sec:GreenPowerOps}

Making use of the group-theoretic results in \cref{GROUPTHEORY_SEC}, we provide in this section a general framework for additive power operations in the equivariant setting. In \cref{subsec:Mackey} we recall the notion of a $G$-Green functor and describe two sources of examples from equivariant homotopy theory. Motivated by the discussion in \cref{OVERVIEW_SEC}, 
in \cref{sec:Ideals} we prove that the induced $G$-Green functor associated to a $G \times \Sigma_m$-Green functor contains a canonical Mackey ideal $\ul{J}$. In \cref{GREEN_POWER_OPS_SEC}, we introduce the notion of a $G \times \Sigma_m$-Green functor with $m$th total power operation and show in \cref{Green_Main_Thm} that taking the quotient by the Mackey ideal $\ul{J}$ leads to a reduced power operation that is a map of Green functors. In the final two subsections, we show that $H_{\infty}$-rings in $G$-spectra and $G_\infty$-rings in global spectra provide two classes of examples of $G \times \Sigma_m$-Green functors with $m$th total power operation.

\subsection{Reminder on Mackey functors and Green functors} \label{subsec:Mackey}

Let $G$ be a finite group. Recall that a $G$-Mackey functor $\ul{M}$ consists of abelian groups $\ul{M}(G/H)$ for each subgroup $H\leq G$, together with restriction and induction maps
\[
      \begin{tikzcd}
            \mathrm{Res}\colon \ul{M}(G/K) \arrow[r]
            &
            \ul{M}(G/H)
            & 
            \text{and}
            &
            \mathrm{Tr}\colon \ul{M}(G/H) \arrow[r, color=\inductioncolor]
            &
            \ul{M}(G/K)
      \end{tikzcd}
\]
for each map $G/H \to G/K$,
satisfying a number of axioms.
The most notable axiom is the double-coset formula, which 
we describe in \cref{DOUBLECOSET_REM} below.

A $G$-Mackey functor can be extended 
to all finite $G$-sets via
\[
      \underline{M}(A_1 \amalg \dots A_n) = \bigoplus_i \ul{M}(A_i) \iso \bigoplus_i \ul{M}(G/H_i)
\]
for $G$-orbits $A_1$, \dots, $A_n$.

\begin{rem}
      \label{DOUBLECOSET_REM}
      The double-coset formula  says that for any pullback of $G$-sets
\[ 
            \begin{tikzcd}
                  A  \arrow[d, twoheadrightarrow]
                  &
                  P \arrow[d, twoheadrightarrow] \ar[l]
                  \\
                  C 
                  &
                  B \ar[l],
            \end{tikzcd}
\]
      in which the vertical maps are surjective,
      the diagram of abelian groups
      \begin{equation}
            \label{DC_EQ}
            \begin{tikzcd}
            \ul{M}(A) \arrow[r, "\Res"] \arrow[d, "\Tr", color=\inductioncolor]
                  &
                  \ul{M}(P) \arrow[d, "\Tr", color=\inductioncolor]
                  \\
                  \ul{M}(C) \arrow[r, "\Res"] 
                  &
                  \ul{M}(B) 
            \end{tikzcd}
      \end{equation}
      commutes.
\end{rem}

\begin{defn} \label{defn:GGreen}
      A \textit{$G$-Green functor} is a $G$-Mackey functor $\ul{R}$ such that
      each $\ul{R}(G/H)$ is a commutative ring,
      each restriction map is a ring homomorphism,
      and each induction map $\mathrm{Tr}\colon \ul{R}(G/H) \indarr \ul{R}(G/K)$ is an $\ul{R}(G/K)$-module map.
      The condition that induction is a module map is also referred to as ``Frobenius reciprocity.''
      A \textit{Mackey ideal} in a Green functor is a sub-Mackey functor which is levelwise an ideal.
\end{defn}

Equivalently, a $G$-Green functor is a commutative monoid in the category of $G$-Mackey functors under the box product;
see e.g. \cite[Prop. 1.4]{LewisGreen}, \cite[Lemma 2.17]{Shu10}.

\begin{ex}
\label{ringGGreen}
      If $E$ is a homotopy-commutative ring (genuine) $G$-spectrum,
      then the $G$-Green functor of coefficients is given by
      \[
            \underline{E}^0(G/H) = E^0(G/H).
      \]
      The restriction and transfer maps are defined via naturality on the maps of $G$-sets
      $G/K \to G/H$ and the $G$-transfers $\Sigma^\infty_G G/H_+ \indxarr{\Tr} \Sigma^\infty_G G/K_+$.
      Moreover, the double coset formula \eqref{DC_EQ} is a special case of Nishida's push-pull property for equivariant cohomology \cite[Prop. 4.4]{Nis78} (see also \cite[IV.1]{LMS}).
\end{ex}

We will pay particular attention to $G\times \Sigma_m$-Green functors
and to the following associated $G$-Green functors.

\begin{defn} \label{def:induced}
      If $\underline{R}$ is any $G\times \Sigma_m$-Green functor, we define $\justupwardarrow{\ul{R}}{G\times \Sigma_m}{G}$
       to be the $G$-Green functor given by the induction 
      of $\ul{R}$ along the projection $G\times \Sigma_m \to G$ (see \cite[Lemma 5.4(ii)]{TWebb}). The value of $\justupwardarrow{\ul{R}}{G\times \Sigma_m}{G}$ on $G/H$ is given by
      \[
            \upwardarrow{\ul{R}}{G\times \Sigma_m}{G}{G/H} = \underline{R}\big((G\times \Sigma_m)/(H\times \Sigma_m)\big).
      \]
      In other words, given a $G$-set $A$, we define $\upwardarrow{\ul{R}}{G\times \Sigma_m}{G}{A}$ to be $\ul{R}(A)$, where in $\ul{R}(A)$ we equip $A$ with a trivial $\Sigma_m$-action. For example, there is an isomorphism of $G \times \Sigma_m$-sets $(G\times \Sigma_m)/(H\times \Sigma_m) \cong G/H$, where $G\times  \Sigma_m$  acts on $G/H$ through the projection $G\times \Sigma_m \to G$. 
\end{defn}

\begin{definition}
      \label{def:restricted}
      If $\underline{R}$ is any $G \times \Sigma_m$-Green functor, we define
      $ \justdownwardarrow{\underline{R}}{G}{G \times \Sigma_m}$
      to be the restriction of $\underline{R}$ along the inclusion $G \cong  G \times \set{e} \leq G \times \Sigma_m$. 
      Explicitly,
      \[
            \downwardarrow{\ul{R}}{G}{G \times \Sigma_m}{G/H} = \underline{R}\big((G\times \Sigma_m)/(H\times e)\big).
      \]
\end{definition}

\begin{ex} \label{BorelGSigma}
      If $E$ is a homotopy-commutative ring $G$-spectrum, then the assignment
      \begin{equation}
            \label{BorelGSigma_EQ}
            (G \times \Sigma_m)/\Lambda \mapsto E^0\Big(\big((G\times \Sigma_m)/\Lambda\big)_{h\Sigma_m}\Big),
      \end{equation}
      for $\Lambda$ a subgroup of $G\times \Sigma_m$, is a $G\times \Sigma_m$-Green functor. In this case, the induced $G$-Green functor as in \cref{def:induced} is given by $\underline{E}^0(B \Sigma_m)$, or explicitly the assignment
      \begin{equation}
            \label{BorelGInd_EQ}
            G/H \mapsto E^0(G/H \times B\Sigma_m),
      \end{equation}
      where $B\Sigma_m$ has a trivial $G$-action.
      The restricted $G$-Green functor as in \cref{def:restricted} is naturally isomorphic to $\underline{E}^0$, as
      \begin{equation}
            \label{BorelGRes_EQ}
            G/H \mapsto E^0\Big( \big(G/H \times \Sigma_m\big)_{h \Sigma_m} \Big) = E^0(G/H \times E \Sigma_m) \cong E^0(G/H).
      \end{equation}
\end{ex}

\begin{example}
\label{globalring_ex}
      If $E$ is a homotopy commutative global ring spectrum, then, for each finite group $G$, there is an underlying homotopy commutative ring $G$-spectrum $E_G$ (see the discussion leading up to \cite[Theorem 4.5.24]{global}). This gives a
      $G$-Green functor 
      \[
      \underline{E}^0_G(G/H) = [G/H_+,E_G]^G
      \]
      and a
      $G\times \Sigma_m$-Green functor $\underline{E}^0_{G\times \Sigma_m}$ (see \cref{GLOBAL_OPS_SEC}).
      In this case, the induced $G$-Green functor as in Definition \ref{def:induced} is given by the assignment
	  \[
	  G/H \mapsto \ul{E}^0_{G \times \Sigma_m}(G/H),
	  \]
	where $G/H$ is given a trivial $\Sigma_m$-action.
\end{example}

We highlight a particular case of the double-coset formula.

\begin{corollary}
      \label{PUSHPULL_COR_TWO}
      For any $G\times \Sigma_m$-Green functor $\underline{R}$,
      the following square of abelian groups commutes
      \[
            \begin{tikzcd}
                  \ul{R}\left(  (G/H)^{\times m} \right)
                  \arrow[r, "i^{\**}"] \arrow[d, color=\inductioncolor, "\Tr"']
                  &
                  \ul{R}\left( (G/H)^{\times_{G/L} m} \right)
                  \arrow[d, color=\inductioncolor, "\Tr"]
                  \\
                  \ul{R}\left(  (G/L)^{\times m} \right)
                  \arrow[r, "\Delta^{\**}"]
                  &
                  \ul{R}\left( G/L \right)
            \end{tikzcd}
      \]
      for any map of $G$-sets $G/H \rtarr G/L$.
\end{corollary}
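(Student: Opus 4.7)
The plan is to exhibit the square in question as coming from a pullback square of finite $G \times \Sigma_m$-sets, and then to invoke the double-coset formula (\cref{DOUBLECOSET_REM}) for the $G \times \Sigma_m$-Green functor $\ul{R}$.

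First, I would equip $G/L$ with the trivial $\Sigma_m$-action, and $(G/L)^{\times m}$, $(G/H)^{\times m}$, and $(G/H)^{\times_{G/L} m}$ with the $\Sigma_m$-action by permuting factors. Then the diagonal $\Delta\colon G/L \to (G/L)^{\times m}$, the $m$-fold product $(G/H)^{\times m} \to (G/L)^{\times m}$, the canonical inclusion $i\colon (G/H)^{\times_{G/L} m} \hookrightarrow (G/H)^{\times m}$, and the projection $(G/H)^{\times_{G/L} m} \to G/L$ are all $G \times \Sigma_m$-equivariant. A direct check on elements shows that
\[
\begin{tikzcd}
(G/H)^{\times_{G/L}m} \arrow[r,"i"] \arrow[d] & (G/H)^{\times m} \arrow[d] \\
G/L \arrow[r,"\Delta"] & (G/L)^{\times m}
\end{tikzcd}
\]
is a pullback of $G \times \Sigma_m$-sets: an element of the pullback is a tuple $(g_1H, \ldots, g_m H)$ together with a coset $gL$ such that $g_iL = gL$ for all $i$, which is equivalent to specifying a tuple in $(G/H)^{\times_{G/L}m}$ (the coset $gL$ being determined).

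Next, I would verify that the two vertical maps of this square are surjective, as required by \cref{DOUBLECOSET_REM}. The map $(G/H)^{\times m} \to (G/L)^{\times m}$ is the $m$-fold product of the surjection $G/H \to G/L$, hence surjective. For the map $(G/H)^{\times_{G/L}m} \to G/L$, a given coset $gL$ is hit by the constant tuple $(gH, \ldots, gH)$.

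Finally, applying the double-coset formula of \cref{DOUBLECOSET_REM} to this pullback and the $G \times \Sigma_m$-Mackey functor underlying $\ul{R}$ produces exactly the commuting square of the statement, where the restriction maps are $\Delta^{\**}$ and $i^{\**}$, and the transfer maps are along the two vertical surjections. The only thing to be mindful of is keeping the $\Sigma_m$-equivariance consistent throughout; there is no real obstacle beyond correctly identifying the pullback square, since the result is a direct specialization of the double-coset formula.
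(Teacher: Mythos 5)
Your proposal is correct and matches the paper's own proof exactly: both exhibit the square as arising from the pullback of $G\times\Sigma_m$-sets with $\Delta$ and the $m$-fold product as the bottom and right legs, check surjectivity of the relevant maps, and invoke the double-coset formula of \cref{DOUBLECOSET_REM}. The extra element-level verifications you include are fine but the paper leaves them implicit.
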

\begin{proof}
      This follows from \cref{DOUBLECOSET_REM}, as we have a pullback square of $G\times \Sigma_m$-sets
      \[ 
            \begin{tikzcd}
                  (G/H)^{\times m}
                  \arrow[d]
                  &
                  (G/H)^{\times_{G/L} m}
                  \arrow[l, "i", swap] \arrow[d]
                  \\
                  (G/L)^{\times m}
                  &
                  G/L, \arrow[l,"\Delta", swap]
            \end{tikzcd}
      \]
      in which the vertical maps are surjective. 
\end{proof}

\subsection{\for{toc}{Certain ideals in the induced $G$-Green functor}\except{toc}{Certain ideals in $\justupwardarrow{\underline{R}}{G\times \Sigma_m}{G}$}}
\label{sec:Ideals}

Given a $G\times \Sigma_m$-Green functor $\ul{R}$, 
\cref{def:induced} produces a $G$-Green functor $\justupwardarrow{\underline{R}}{G \times \Sigma_m}{G}$.

\begin{notn}
\label{UpArrowAbr}
Since the induced Mackey functor $\justupwardarrow{\underline{R}}{G \times \Sigma_m}{G}$ will appear many times in this subsection, we will abbreviate it to $\Rup$.
\end{notn}

 In this subsection, we describe two Mackey ideals in the $G$-Green functor $\Rup$ that depend on the fact that $\Rup$ is induced from $\ul{R}$. The 
definitions of these Mackey ideals are motivated by considerations coming from power operations as in \cref{GROUPTHEORY_SEC}; however, they make sense in any $G$-Green functor of the form $\Rup$.

We begin with the transfer Mackey ideal.

\begin{defn}
      \label{ITR_DEF}
      Fix a $G\times \Sigma_m$-Green functor $\ul{R}$. Define $\ul{I}_{\Tr}(G/H) \subseteq \Rup(G/H)$ to be the image of the transfers
      \[
                 \bigoplus_{\substack{i+j=m \\ i,j>0}} 
                  \ul{R}\big( (G\times \Sigma_m)/(H \times \Sigma_i \times \Sigma_j ) \big)
                  \indxarr{\Tr} \ul{R}\big((G\times \Sigma_m)/(H\times  \Sigma_m ) \big).
      \]
      We note that the target is $\Rup(G/H)$.
\end{defn}

\begin{lemma}
\label{ITR_MACKEY}
      The ideals $\ul{I}_{\Tr}(G/H)$ of \cref{ITR_DEF} fit together to define a Mackey ideal of $\Rup$.
\end{lemma}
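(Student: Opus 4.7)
There are three conditions to verify: (i) each $\ul{I}_{\Tr}(G/H)$ is an ideal of the commutative ring $\Rup(G/H)$; (ii) the transfer maps of the $G$-Mackey functor $\Rup$ preserve $\ul{I}_{\Tr}$; and (iii) the restriction maps of $\Rup$ preserve $\ul{I}_{\Tr}$. Condition (i) is immediate from Frobenius reciprocity: in any Green functor the image of a transfer is an ideal, and a sum of ideals is an ideal.

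For condition (ii), given $K \leq H \leq G$, the induction $\Rup(G/K) \indarr \Rup(G/H)$ is the $\ul{R}$-transfer along the $G\times\Sigma_m$-equivariant surjection $(G\times \Sigma_m)/(K\times \Sigma_m) \onto (G\times \Sigma_m)/(H\times \Sigma_m)$. Applied to a generator of $\ul{I}_{\Tr}(G/K)$---namely, the transfer from $(G\times \Sigma_m)/(K\times \Sigma_i\times \Sigma_j)$---transitivity of transfers in $\ul{R}$ produces a single transfer from $(G\times \Sigma_m)/(K\times \Sigma_i\times \Sigma_j)$ to $(G\times \Sigma_m)/(H\times \Sigma_m)$. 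Since this map factors through $(G\times \Sigma_m)/(H\times \Sigma_i\times \Sigma_j)$, applying transitivity again expresses the result as a transfer from $(G\times \Sigma_m)/(H\times \Sigma_i\times \Sigma_j)$, so it lies in $\ul{I}_{\Tr}(G/H)$. The conjugation components of the Mackey structure of $\Rup$ behave analogously, since conjugation by an element of $G$ sends the generating inclusion $(H\times \Sigma_i\times \Sigma_j) \leq (H\times \Sigma_m)$ to the corresponding inclusion for a conjugate of $H$, which defines the same ideal at the conjugate level.

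Condition (iii) is the main obstacle, and I would dispatch it using the double coset formula of \cref{DOUBLECOSET_REM} applied to the square
\[
\begin{tikzcd}
(G\times \Sigma_m)/(K\times \Sigma_i\times \Sigma_j) \arrow[r] \arrow[d, twoheadrightarrow] & (G\times \Sigma_m)/(H\times \Sigma_i\times \Sigma_j) \arrow[d, twoheadrightarrow] \\
(G\times \Sigma_m)/(K\times \Sigma_m) \arrow[r] & (G\times \Sigma_m)/(H\times \Sigma_m).
\end{tikzcd}
\]
The key step is to check that this is a pullback of $G\times \Sigma_m$-sets, which I would do via the identification $(G\times \Sigma_m)/(H\times \Sigma_i\times \Sigma_j) \cong G/H \times \Sigma_m/(\Sigma_i\times \Sigma_j)$, under which the right vertical map is $\id_{G/H}$ paired with the projection $\Sigma_m/(\Sigma_i\times \Sigma_j) \to \ast$; the pullback is then manifestly $G/K \times \Sigma_m/(\Sigma_i\times \Sigma_j) \cong (G\times \Sigma_m)/(K\times \Sigma_i\times \Sigma_j)$. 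With that in hand, the double coset formula gives $\Res \circ \Tr = \Tr \circ \Res$ around the square, showing that the restriction of a generator of $\ul{I}_{\Tr}(G/H)$ is the $\ul{R}$-transfer of a restricted element and therefore lies in $\ul{I}_{\Tr}(G/K)$ by definition. The only real subtlety is the pullback identification; once it is verified, the ideal property propagates through the Mackey structure purely formally.
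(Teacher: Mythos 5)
Your proof is correct and follows essentially the same route as the paper: Frobenius reciprocity for the ideal condition, the pullback square $G/K\times \Sigma_m/(\Sigma_i\times\Sigma_j)$ over $G/K\times\ast$ together with the double coset formula of \cref{DOUBLECOSET_REM} for closure under restriction, and transitivity of transfers for closure under induction. You spell out slightly more detail than the paper does (the factorization through $(G\times\Sigma_m)/(H\times\Sigma_i\times\Sigma_j)$ in the induction step and the explicit verification of the pullback), but the argument is the same.
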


\begin{proof}
      Frobenius reciprocity implies that $\ul{I}_{\Tr}(G/H)$ is an ideal of $\Rup(G/H)$.

      It remains to show that $\ul{I}_{\Tr}$ is a sub-Mackey functor.
      To see that $\ul{I}_{\Tr}$ is closed under restriction maps, note that 
            \[
            \begin{tikzcd}
                  G/H \times  \Sigma_m/(\Sigma_i \times \Sigma_j) \arrow[r] \arrow[d]
                  &
                  G/K \times  \Sigma_m/(\Sigma_i \times \Sigma_j) \arrow[d]
                  \\
                  G/H \times  \Sigma_m/ \Sigma_m  \arrow[r]
                  &
                  G/K \times \Sigma_m/ \Sigma_m 
            \end{tikzcd}
      \]      
      is a  pullback square of $G\times \Sigma_m$-sets and apply \cref{DOUBLECOSET_REM}.
      Finally, $\ul{I}_{\Tr}$ is closed under inductions since the composition of inductions is again an induction.
\end{proof}

Now we will define a Mackey ideal $\ul{J} \subseteq \Rup$, inspired by the ideal $J^G$ of \cref{GROUPTHEORY_SEC}, with the property that $\ul{I}_{\Tr} \subseteq \ul{J}$.

We have a diagonal inclusion of $G\times \Sigma_m$-sets
\[ G/H \xrtarr{\Delta} (G/H)^{\times_{G/L} m}\]
with complementary $G\times \Sigma_m$-set (cf. \eqref{Z})
\[ Z^{L,H}_G = Z^{L,H} = (G/H)^{\times_{G/L} m} \setminus \Delta(G/H).\]
In other words, there is a decomposition of $G\times \Sigma_m$-sets
\begin{equation} \label{IterFiber_Decomp}
(G/H)^{\times_{G/L} m} \ \iso \ G/H \ \amalg \ Z^{L,H}_G,
\end{equation}
where $G/H$ has a trivial $\Sigma_m$-action.
Note that $Z^{G,H}_G$ is what was previously called $Z^{G,H}$ in \cref{Z}.
We will often suppress the subscript $G$ in the notation when there is no likelihood for confusion.
The following description of $Z^{L,H}_G$ will be useful below. 

\begin{prop}
\label{Z_LEM}
The $G\times \Sigma_m$-set $Z^{L,H}_G$ is induced from the subgroup $L\times \Sigma_m $:
\[ Z^{L,H}_G \iso G\times_L \left[ (L/H)^{\times m} \setminus \Delta(L/H)\right] = G\times_L Z^{L,H}_L.\]
\end{prop}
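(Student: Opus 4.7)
The plan is to produce an explicit $G\times\Sigma_m$-equivariant isomorphism
\[
\varphi\colon G\times_L (L/H)^{\times m} \xrightarrow{\ \cong\ } (G/H)^{\times_{G/L} m}
\]
and check that it carries $G\times_L \Delta(L/H)$ bijectively onto $\Delta(G/H)$, so that it restricts to the desired isomorphism on complements.

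First I would define $\varphi$ on representatives by
\[
\varphi\bigl([g,(\ell_1H,\ldots,\ell_mH)]\bigr) = (g\ell_1 H,\ldots,g\ell_m H).
\]
I would check that $\varphi$ is well-defined on equivalence classes: replacing $(g,\vec{\ell}H)$ by $(g\ell^{-1},(\ell\cdot \vec{\ell}\, H))$ for $\ell\in L$ leaves each $g\ell_i H$ unchanged. Equivariance is then immediate: $G$ acts by left multiplication on the first coordinate of $G\times_L(-)$ and diagonally on $(G/H)^{\times_{G/L} m}$, while $\Sigma_m$ acts by permuting the $m$ factors in both source and target. The image lies in the fiber product over $G/L$ because $g\ell_i L = gL$ for every $i$.

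Next I would verify bijectivity. For surjectivity, given $(g_1H,\ldots,g_mH)$ in the fiber product, fix a representative $g_1$; since $g_iL = g_1L$ for all $i$, there exist $\ell_i\in L$ with $g_i = g_1\ell_i$, and then $\varphi\bigl([g_1,(\ell_1H,\ldots,\ell_mH)]\bigr)$ equals the chosen tuple. For injectivity, suppose $\varphi([g,\vec{\ell}H]) = \varphi([g',\vec{\ell'}H])$. Comparing the first coordinates modulo $L$ gives $gL = g'L$, so $g' = g\ell$ for some $\ell\in L$; then $g\ell_iH = g\ell\ell'_iH$ forces $\ell_iH = \ell\ell'_iH$ for all $i$, whence $[g,\vec{\ell}H] = [g\ell,\ell^{-1}\cdot\vec{\ell}\,H] = [g',\vec{\ell'}H]$.

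Finally I would observe that $\varphi$ sends the sub-$G\times\Sigma_m$-set $G\times_L \Delta(L/H) \subseteq G\times_L (L/H)^{\times m}$ isomorphically onto $\Delta(G/H) \subseteq (G/H)^{\times_{G/L} m}$: on the one hand $\varphi([g,(H,\ldots,H)]) = (gH,\ldots,gH)$, and conversely any element of $\Delta(G/H)$ lies in the fiber product and is of this form. Passing to complements gives the claimed isomorphism $Z^{L,H}_G \cong G\times_L Z^{L,H}_L$. No step here is really a serious obstacle; the only thing to be careful about is confirming well-definedness and the compatibility with the two diagonals, which is purely formal.
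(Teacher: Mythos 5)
Your proof is correct and is essentially the paper's argument made explicit: the paper simply cites that the induction functor $G\times_L(-)$ preserves pullbacks to get $(G/H)^{\times_{G/L}m}\cong G\times_L\bigl((L/H)^{\times m}\bigr)$ and then applies induction to the decomposition $(L/H)^{\times m}\cong L/H\amalg Z^{L,H}_L$, whereas you verify the same isomorphism by an element-level computation and match up the diagonals directly. Both routes are fine; yours just trades the categorical shortcut for explicit formulas.
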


\begin{proof} Since the $G$-set induction functor $G\times_L(-) \colon L\text{-Set} \rtarr G\text{-Set}$ preserves pullbacks, 
it follows that $(G/H)^{\times_{G/L}m}$ is isomorphic to $G\times_L\big((L/H)^{\times m}\big)$.
Since $G\times_L L/H \iso G/H$, applying induction to the $L=G$ case of the decomposition \cref{IterFiber_Decomp}
produces a decomposition
$ G\times_L Z^{L,H}_L \iso \ G/H \amalg G\times_L Z^{L,H}_H.$
\end{proof}

The decomposition \cref{IterFiber_Decomp} induces an isomorphism of commutative rings 
\begin{equation}
      \label{RGHL_EQ}
      \ul{R}\left( (G/H)^{\times_{G/L} m} \right)
      \cong
     \Rup\left( G/H \right) \times \ul{R}(Z^{L,H}).
\end{equation}

We may obtain $\Rup(G/H)/\ul{I}_{\Tr}(G/H)$
 from $\ul{R}((G/H)^{\times_{G/L} m})$ by taking the quotient by
$\ul{I}_{\Tr}(G/H)$ in the first factor of \eqref{RGHL_EQ} and taking the quotient with respect to the entire second factor.
This inspires the definition of $\ul{J}$:

\begin{definition}
      \label{JGL_DEF}
      Fix a $G\times \Sigma_m$-Green functor $\ul{R}$.
      Define $\ul{J}(G/L) \subseteq \Rup(G/L)$ to be
      the ideal generated by the images of the transfers along:
      \begin{itemize}
      \item the quotients $G/L \times \Sigma_m/(\Sigma_i \times  \Sigma_j) \to G/L \times \Sigma_m / \Sigma_m $ for $i + j = m$ and
      \item the composition $Z^{L,H} \into (G/H)^{\times_{G/L} m} \to  G/L \times \Sigma_m / \Sigma_m $ for $H \leq L$.
      \end{itemize}
\end{definition}

By construction, we have the following compatibility.

\begin{prop}\label{JCOMPAT_LEM}
For $H$ subconjugate to $L$, we have the following commutative diagram
\[
    \begin{tikzcd}
          \ul{R}\left( (G/H)^{\times_{G/L} m} \right) \arrow[d, "\Tr",color=\inductioncolor] \arrow[r, twoheadrightarrow]
          &
          \Rup\left( G/H  \right)/\ul{I}_{\Tr}(G/H) \arrow[d, "\Tr",color=\inductioncolor]
          \\
          \Rup(G/L  ) \arrow[r,twoheadrightarrow]
          &
          \Rup(G/L) / \ul{J}(G/L),
    \end{tikzcd}
\]
in which the top map is given by first projecting to the left factor in \eqref{RGHL_EQ} and then taking the quotient by the ideal $\ul{I}_{\Tr}(G/H)$.
\end{prop}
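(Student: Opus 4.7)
The plan is to unpack the diagram using the decomposition \cref{IterFiber_Decomp} and to verify commutativity by splitting the left-hand transfer according to the coproduct structure.

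First, by \cref{IterFiber_Decomp}, we have an isomorphism of $G\times\Sigma_m$-sets $(G/H)^{\times_{G/L} m} \cong G/H \amalg Z^{L,H}$ (with trivial $\Sigma_m$-action on $G/H$), inducing the product decomposition \eqref{RGHL_EQ}. The structure map $(G/H)^{\times_{G/L}m} \to G/L$ is $G\times\Sigma_m$-equivariant (where $G/L$ carries a trivial $\Sigma_m$-action, since the map factors through the $\Sigma_m$-quotient), and it restricts to the canonical projection $G/H \to G/L$ on the diagonal summand and to a structure map $Z^{L,H} \to G/L$ on the complement. Since transfers respect coproducts, the left-hand transfer in the diagram splits under \eqref{RGHL_EQ} as $(r_1, r_2) \longmapsto \Tr_{G/H \to G/L}(r_1) + \Tr_{Z^{L,H} \to G/L}(r_2)$.

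Second, the second bullet in \cref{JGL_DEF} says exactly that $\Tr_{Z^{L,H} \to G/L}(r_2) \in \ul{J}(G/L)$ for every $r_2 \in \ul{R}(Z^{L,H})$. Thus the second summand dies modulo $\ul{J}(G/L)$, and so going down-then-right sends $(r_1, r_2)$ to $[\Tr_{G/H \to G/L}(r_1)]$ in $\Rup(G/L)/\ul{J}(G/L)$.

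Third, to see that going right-then-down is well-defined and produces the same value, one must verify that $\Tr_{G/H \to G/L}$ sends $\ul{I}_{\Tr}(G/H)$ into $\ul{J}(G/L)$. A generator of $\ul{I}_{\Tr}(G/H)$ has the form $\Tr(x)$ for $x \in \ul{R}\bigl((G\times\Sigma_m)/(H\times\Sigma_i\times\Sigma_j)\bigr)$ with $i+j = m$ and $i,j>0$. By transitivity of transfers, $\Tr_{G/H \to G/L}(\Tr(x))$ is the transfer of $x$ along the composite $(G\times\Sigma_m)/(H\times\Sigma_i\times\Sigma_j) \to (G\times\Sigma_m)/(L\times\Sigma_m)$, which factors through $(G\times\Sigma_m)/(L\times\Sigma_i\times\Sigma_j)$. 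Applying transitivity once more exhibits this element as lying in $\ul{I}_{\Tr}(G/L)$, and the first bullet of \cref{JGL_DEF} gives $\ul{I}_{\Tr}(G/L) \subseteq \ul{J}(G/L)$. Then going right-then-down sends $(r_1, r_2)$ to $[r_1]$ in $\Rup(G/H)/\ul{I}_{\Tr}(G/H)$ and thence to $[\Tr_{G/H \to G/L}(r_1)]$ in $\Rup(G/L)/\ul{J}(G/L)$, matching the result of the other route.

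The argument is a direct diagram chase, and there is no real obstacle beyond careful bookkeeping; the only step requiring thought is the third, where one must recognize that the relevant double composite of transfers lies in $\ul{I}_{\Tr}(G/L)$ via the chain of subgroup inclusions $H\times\Sigma_i\times\Sigma_j \subseteq L\times\Sigma_i\times\Sigma_j \subseteq L\times\Sigma_m$.
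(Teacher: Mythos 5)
Your argument is correct and is exactly the elaboration of what the paper leaves implicit (the proposition is stated as holding ``by construction,'' following the discussion surrounding \eqref{RGHL_EQ} and \cref{JGL_DEF}): the transfer splits along the decomposition \eqref{IterFiber_Decomp}, the $Z^{L,H}$ summand dies in $\ul{J}(G/L)$ by the second bullet of \cref{JGL_DEF}, and well-definedness of the right-hand vertical map follows since $\ul{I}_{\Tr}$ is closed under induction. Your third step is a direct re-derivation of the induction-closure part of \cref{ITR_MACKEY}, which the paper could equally well have cited.
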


The analogue of \cref{JG_ACTION_THM} in this context reads as follows. Like \cref{JG_ACTION_THM}, it is an immediate consequence of \cref{STABSURJ_PROP}.

\begin{prop}\label{JG_ACTION_NONBOREL_THM}
      Let $\ul{R}$ be a  $G\times \Sigma_m$-Green functor  and  fix $m \geq 1$.
      Then $\ul{J}(G/L) \subseteq \Rup( G/L)$ is  generated by $\ul{I}_{\Tr}(G/L)$ and
      the images of the  transfers 
\[             
             \ul{R}\Big(\bigmod{ G\times \Sigma_m}{\Sigma_q \wr_{\ul{n}} \Gamma(a_{S/K})} \Big)
              \indxarr{\Tr}  \ul{R}(G/L ) = \Rup(G/L)
 \]
      for all $m = nq$, $K < S \leq L$ with $[S : K] = n \neq 1$,
      and $a_{S/K} \colon S \to \Aut_{\Set}(S/K) \cong \Sigma_n$ the action map by left multiplication.
\end{prop}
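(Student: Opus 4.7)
The plan is to unpack \cref{JGL_DEF} and match it against the stabilizer classification of \cref{STABSURJ_PROP}. The first family of generators of $\ul{J}(G/L)$ already generates $\ul{I}_{\Tr}(G/L)$ by \cref{ITR_DEF}, so the task reduces to analyzing the transfers coming from the composites $Z^{L,H} \hookrightarrow (G/H)^{\times_{G/L} m} \to G/L$ as $H$ ranges over subgroups of $L$.

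I would begin by decomposing $Z^{L,H}$ into $G \times \Sigma_m$-orbits. By \cref{Z_LEM}, it suffices to decompose $Z^{L,H}_L = (L/H)^{\times m} \setminus \Delta(L/H)$ into $L \times \Sigma_m$-orbits, each of the form $(L \times \Sigma_m)/\Lambda$ for some $\Lambda \leq L \times \Sigma_m$. I would then dichotomize according to whether the image $\pi_{\Sigma_m}(\Lambda) \leq \Sigma_m$ is transitive.

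If $\pi_{\Sigma_m}(\Lambda)$ is not transitive, its orbit partition yields an inclusion $\pi_{\Sigma_m}(\Lambda) \subseteq \Sigma_i \times \Sigma_j$ for some $i + j = m$ with $i,j > 0$, and hence $\Lambda \subseteq L \times \Sigma_i \times \Sigma_j$. The quotient $(G \times \Sigma_m)/\Lambda \to G/L$ then factors through $G/L \times \Sigma_m/(\Sigma_i \times \Sigma_j)$, so transitivity of transfers along surjections forces the image of the corresponding transfer to lie in $\ul{I}_{\Tr}(G/L)$.

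If $\pi_{\Sigma_m}(\Lambda)$ is transitive, then \cref{STABSURJ_PROP}, applied with $L$ in place of $G$, identifies $\Lambda$ up to $\Sigma_m$-conjugacy as some $\Sigma_q \wr_{\ul{n}} \Gamma(a_{S/K})$ with $m = nq$, $K < S \leq L$, and $[S:K] = n \neq 1$. Conversely, the section $\zeta$ constructed in the proof of \cref{STABSURJ_PROP} exhibits each such conjugacy class as the stabilizer of a tuple in $Z^{L,K}_L$, so every group on the list genuinely arises (taking $H = K$). Combining the two cases gives exactly the generating set in the statement. The only genuine bookkeeping is the factorization argument in the non-transitive case; once that is in place, the result is immediate from \cref{STABSURJ_PROP}.
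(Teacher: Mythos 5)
Your argument is correct and is precisely the reasoning the paper leaves implicit: the paper asserts the proposition as ``an immediate consequence of \cref{STABSURJ_PROP},'' relying on the orbit decomposition and the transitive/non-transitive dichotomy sketched at the start of \cref{STAB_SEC}, which is exactly what you spell out. Your use of \cref{Z_LEM} to reduce to $L\times\Sigma_m$-orbits and the factorization through $G/L\times\Sigma_m/(\Sigma_i\times\Sigma_j)$ in the non-transitive case also match the corresponding steps in the paper's proof of \cref{JSUBMACKEY_LEM}.
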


In the case that $m$ is prime, \cref{PRIMESURJ_COR} gives the following simplified form.

\begin{prop}\label{JG_ACTION_PRIME_NONBOREL_THM}
      Let $\ul{R}$ be a  $G\times \Sigma_m$-Green functor  and let $m=p$ be prime.
      Then $\ul{J}(G/L) \subseteq \Rup (G/L)$ is  generated by $\ul{I}_{\Tr}(G/L)$ and
      the images of the  transfers 
\[
            \ul{R}\Big( \bigmod{ G\times  \Sigma_p}{ \Gamma(a)} \Big)
              \indxarr{\Tr}  \ul{R}(G/L) = \Rup(G/L)
\]
      for all subgroups $S\leq L$ and homomorphisms $a\colon S \rtarr \Sigma_p$ whose images contain a $p$-cycle.
\end{prop}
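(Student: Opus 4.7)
The plan is to obtain this statement as a direct specialization of \cref{JG_ACTION_NONBOREL_THM} to the prime case, followed by a reindexing of the generating subgroups using \cref{PRIMESURJ_COR}.

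First I would apply \cref{JG_ACTION_NONBOREL_THM} with $m = p$. Since $p$ is prime, the only factorizations $p = nq$ with $n \neq 1$ satisfy $n = p$ and $q = 1$. In this case the wreath product of \cref{WREATH_NOTN} collapses, because $\Sigma_1$ is trivial, yielding
\[
\Sigma_1 \wr_{\underline{p}} \Gamma(a_{S/K}) \;=\; \Gamma(a_{S/K}) \;\leq\; G \times \Sigma_p.
\]
Thus \cref{JG_ACTION_NONBOREL_THM} specializes to say that $\ul{J}(G/L)$ is generated by $\ul{I}_{\Tr}(G/L)$ together with the images of the transfers along $\Gamma(a_{S/K})$ for all $K < S \leq L$ with $[S:K] = p$, where $a_{S/K}\colon S \to \Sigma_p$ is the action map on cosets.

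Next I would translate the indexing set "$K < S \leq L$ with $[S:K] = p$" into the indexing set "$S \leq L$ together with $a\colon S \to \Sigma_p$ whose image contains a $p$-cycle" appearing in the statement. This is exactly the content of the remark following \cref{PRIMESURJ_COR}: given $K < S$ with $[S:K] = p$, the homomorphism $a_{S/K}$ has transitive image in $\Sigma_p$, and by Cauchy's theorem any transitive subgroup of $\Sigma_p$ contains a $p$-cycle; conversely, given $a\colon S \to \Sigma_p$ with transitive image, setting $K = a^{-1}(\mathrm{Stab}_{\Sigma_p}(1))$ produces a subgroup with $[S:K] = p$ such that $a$ is conjugate to $a_{S/K}$. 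Since the image of the transfer
\[
\ul{R}\big( (G \times \Sigma_p)/\Lambda \big) \indxarr{\Tr} \Rup(G/L)
\]
depends only on the $G \times \Sigma_p$-isomorphism type of $(G \times \Sigma_p)/\Lambda$, and hence only on the conjugacy class of $\Lambda$, the two collections of transfers produce the same ideal.

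The main obstacle in the argument is essentially bookkeeping: ensuring that no genuinely new subgroups appear when we pass from the constrained parameterization (by pairs $K < S$) to the more flexible one (by arbitrary homomorphisms $a$), and that the independence of the ideal under $\Sigma_p$-conjugacy and under different orderings of $S/K$ is correctly invoked. All of this is already encoded in \cref{NEWSTAB_REM} and \cref{PRIMESURJ_COR}, so no additional work is required beyond citing them.
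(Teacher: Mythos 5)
Your proposal is correct and follows essentially the same route as the paper, which gives no written proof but simply asserts that \cref{PRIMESURJ_COR} yields the prime case; specializing \cref{JG_ACTION_NONBOREL_THM} to $m=p$ (so $n=p$, $q=1$ and the wreath construction collapses) and then reindexing the graph subgroups via \cref{PRIMESURJ_COR} and the remark following it is exactly the intended argument. The only detail worth being explicit about — that the image of a transfer depends only on the $\Sigma_p$-conjugacy class of the graph subgroup, so the choice of ordering of $S/K$ is immaterial — is one you have correctly invoked.
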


The proof of the following corollary is the same as for \cref{JGRELPRIME_COR}.

\begin{cor} If $m$ is relatively prime to the order of $G$, then $\ul{J} = \ul{I}_{\Tr}$.
\end{cor}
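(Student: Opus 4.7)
The plan is to mimic the proof of \cref{JGRELPRIME_COR} essentially verbatim, but applied levelwise to each $G/L$. By \cref{JG_ACTION_NONBOREL_THM}, the ideal $\ul{J}(G/L)$ is generated by $\ul{I}_{\Tr}(G/L)$ together with the images of transfers indexed by data $(m = nq,\ K < S \leq L,\ [S:K] = n \neq 1)$. It therefore suffices to show that under the hypothesis $\gcd(m, |G|) = 1$, this indexing set is empty for every subgroup $L \leq G$.

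Suppose for contradiction that such data existed. Then $n = [S:K]$ is a divisor of $|S|$, which in turn divides $|L| \leq |G|$, so $n \mid |G|$. On the other hand $n \mid m$ since $m = nq$. Combined with $\gcd(m, |G|) = 1$, this forces $n = 1$, contradicting $n \neq 1$.

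Consequently the only generators of $\ul{J}(G/L)$ described in \cref{JG_ACTION_NONBOREL_THM} are those coming from $\ul{I}_{\Tr}(G/L)$, yielding $\ul{J}(G/L) = \ul{I}_{\Tr}(G/L)$ for every $L \leq G$. Since we already know from \cref{ITR_MACKEY} and the construction of \cref{JGL_DEF} that both $\ul{I}_{\Tr}$ and $\ul{J}$ are Mackey ideals in $\Rup$, this levelwise equality gives the desired identity $\ul{J} = \ul{I}_{\Tr}$ of Mackey ideals. There is no real obstacle here: the entire argument is a group-theoretic divisibility check, and the surjection of \cref{STABSURJ_PROP} (which powers \cref{JG_ACTION_NONBOREL_THM}) has already done the heavy lifting of turning the Mackey-theoretic statement into a condition on subgroup indices.
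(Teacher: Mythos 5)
Your argument is correct and is exactly the paper's proof: the authors state that the proof is the same as for \cref{JGRELPRIME_COR}, namely that the extra generators of $\ul{J}(G/L)$ from \cref{JG_ACTION_NONBOREL_THM} are indexed by subgroups $K < S \leq L$ with $[S:K] \neq 1$ dividing both $m$ and $|G|$, which cannot exist when $\gcd(m,|G|)=1$. Nothing further is needed.
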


Now that we have described the ideals $\ul{J}(G/L)$ for fixed $L$, we turn to the question of how they interact as $L$ varies.

\begin{thm}
      \label{JSUBMACKEY_LEM}
      The ideals $\ul{J}(G/L)$ of \cref{JGL_DEF} fit together to define a Mackey  ideal of $\Rup$.
\end{thm}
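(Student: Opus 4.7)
The plan is to verify: (i) each $\ul{J}(G/L)$ is an ideal of $\Rup(G/L)$; (ii) $\ul{J}$ is closed under the induction maps in $\Rup$; and (iii) $\ul{J}$ is closed under the restriction maps in $\Rup$. Claim (i) is immediate from Frobenius reciprocity in $\ul{R}$, since $\ul{J}(G/L)$ is defined as the ideal generated by images of transfers. For (ii), take $K \leq L$ and a generator of $\ul{J}(G/K)$: if it comes from $\ul{I}_{\Tr}(G/K)$, then its further induction to $\Rup(G/L)$ lies in $\ul{I}_{\Tr}(G/L) \subseteq \ul{J}(G/L)$ by \cref{ITR_MACKEY}; if instead it is a transfer along some $\Sigma_q \wr_{\underline{n}} \Gamma(a_{S/K'}) \leq G\times\Sigma_m$ with $K' < S \leq K$ (per \cref{JG_ACTION_NONBOREL_THM}), composing with further induction produces a transfer along the same subgroup, and $S \leq K \leq L$ confirms membership in the generating set for $\ul{J}(G/L)$.

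The main content is (iii). Fix $K \leq L$ and a generator of $\ul{J}(G/L)$; by \cref{ITR_MACKEY} we may assume it has the form $\Tr(x)$ with $x \in \ul{R}(Z^{L,H}_G)$ for some $H \leq L$. I apply the double coset formula of \cref{DOUBLECOSET_REM} to the pullback of $G\times\Sigma_m$-sets
\[
\begin{tikzcd}
Z^{L,H}_G \arrow[d,twoheadrightarrow] & Z^{L,H}_G \times_{G/L} G/K \arrow[l] \arrow[d,twoheadrightarrow] \\
G/L & G/K \arrow[l]
\end{tikzcd}
\]
to rewrite $\Res_K^L(\Tr(x))$ as a sum of transfers indexed by the $G\times\Sigma_m$-orbits of the pullback. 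A direct set-theoretic calculation, relying on \cref{Z_LEM} together with the compatibility of induction with iterated fiber products, identifies $Z^{L,H}_G \times_{G/L} G/K$ with $G\times_K Z^{L,H}_L$, so its $G\times\Sigma_m$-orbits correspond to $K\times\Sigma_m$-orbits in $Z^{L,H}_L \subseteq (L/H)^{\times m}$.

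I then analyze the $K\times\Sigma_m$-stabilizer of a representative $\vect{v} \in Z^{L,H}_L$, viewed inside $(G/H)^{\times m}$. If its $\Sigma_m$-projection is intransitive, the stabilizer is contained in a partition subgroup $K \times \Sigma_{i_1} \times \cdots \times \Sigma_{i_r}$ with $r \geq 2$, so the corresponding transfer lies in $\ul{I}_{\Tr}(G/K) \subseteq \ul{J}(G/K)$. If the projection is transitive, applying \cref{GRAPHSTAB_LEM} (the computational engine behind \cref{STABSURJ_PROP}) to the $K\times\Sigma_m$-action presents the stabilizer in the form $\Sigma_q \wr_{\underline{n}} \Gamma(a_{S/K'})$ with $K' < S \leq K$; since $S \leq K$, this is exactly a generator of $\ul{J}(G/K)$ by \cref{JG_ACTION_NONBOREL_THM}. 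Summing over orbits yields $\Res_K^L(\Tr(x)) \in \ul{J}(G/K)$.

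The principal technical hurdle is the identification $Z^{L,H}_G \times_{G/L} G/K \cong G\times_K Z^{L,H}_L$ and then verifying that the stabilizers produced in the restriction analysis genuinely lie in $K$ rather than $L$. This reduces to the observation that an element $(g,\sigma) \in G\times\Sigma_m$ stabilizing a point of $G\times_K Z^{L,H}_L$ must preserve the $G/K$-coordinate, which forces $g \in K$.
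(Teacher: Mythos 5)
Your proof is correct and follows essentially the same route as the paper's: the same pullback and double-coset formula, the same orbit decomposition of $Z^{L,H}_G\times_{G/L}G/K$ (your identification with $G\times_K Z^{L,H}_L$ is the paper's reduction to $L=G$ via \cref{Z_LEM} in different packaging), and the same two-case stabilizer analysis via \cref{GRAPHSTAB_LEM}. The only cosmetic difference is that in the transitive case you match the resulting stabilizer against the generator list of \cref{JG_ACTION_NONBOREL_THM}, whereas the paper re-realizes the orbit inside some $Z^{K,H'}$ using the surjectivity statement of \cref{STABSURJ_PROP}; these are equivalent.
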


\begin{proof}
It suffices to show that $\ul{J}$ is a sub-Mackey functor, i.e. that the image of the transfers in \cref{JGL_DEF} is closed under restriction and induction.  
By \cref{ITR_MACKEY}, it suffices to show that 
if $H\leq K \leq L$ (up to conjugacy), then 
\begin{enumerate}
\item\label{ResItem} the image of 
\[ \begin{tikzcd} 
\ul{R}( Z^{L,H}) \ar[r, color=\inductioncolor, "\Tr"] &  \ul{R}(G/L) = \Rup(G/L) \ar[r, "{\mathrm{Res}}"] & \Rup (G/K)
\end{tikzcd}\]
lands in 
$\ul{J}(G/K) \subset \Rup (G/K)$
and
\item\label{IndItem} the image of
\[ \begin{tikzcd} 
\ul{R}( Z^{K,H}) \ar[r, color=\inductioncolor,"\Tr"] &  \ul{R}(G/K)=\Rup (G/K) \ar[r, color=\inductioncolor, "\Tr"]  & \Rup (G/L) 
\end{tikzcd}\]
lands in 
$\ul{J}(G/L) \subset \Rup (G/L)$.
\end{enumerate}

We begin with \eqref{IndItem}, as it is much simpler to verify.
We have a commutative diagram of $G\times \Sigma_m$-sets
\[\begin{tikzcd}
Z^{K,H} \ar[r, hookrightarrow] \ar[d, dashed, hookrightarrow] & 
G/H^{\times_{G/K} m} \ar[d, hookrightarrow] \ar[r, twoheadrightarrow] & 
G/K \ar[d,twoheadrightarrow] \\
Z^{L,H} \ar[r, hookrightarrow]  & 
G/H^{\times_{G/L} m} \ar[r, twoheadrightarrow] &
G/L,
\end{tikzcd}\]
which yields the commutative diagram
\[ \begin{tikzcd}
\ul{R}(Z^{K,H})  \ar[r,  color=\inductioncolor, "\Tr"] \ar[d, hookrightarrow] & 
\ul{R}(G/K)  \ar[d, color=\inductioncolor, "\Tr"] \ar[r,equal] & \Rup(G/K) \ar[d, color=\inductioncolor, "\Tr"] \\
\ul{R}(Z^{L,H})  \ar[r,  color=\inductioncolor, "\Tr"] &
\ul{R}(G/L) \ar[r,equal] & \Rup(G/L). 
\end{tikzcd}\]
It follows that $\ul{J}$ is closed under Mackey induction.
      
We now turn to \eqref{ResItem}, which is more difficult to handle.
      It suffices to show that we have a factorization
      \begin{equation}
            \label{JSUBMACK_EQ}
            \begin{tikzcd}
                  \displaystyle{
                    \bigoplus_{H < L} \ul{R}\left( Z^{L,H}  \right) 
                }
                  \arrow[d, dashed] \arrow[r, color=\inductioncolor, "\Tr"]
                  &
                  \Rup ( G/L) \arrow[d, "\mathrm{Res}"]
                  \\
                  \displaystyle{
                    \bigoplus_{i+j=m}\ul{R}\Big(G/K \times \Sigma_m/ (\Sigma_i \times  \Sigma_j) \Big) \oplus
                    \bigoplus_{H' < K} \ul{R}\left( Z^{K,H'} \right) 
                  }
                  \arrow[r, color=\inductioncolor, "\Tr"]
                  &
                  \Rup ( G/K),
            \end{tikzcd}
      \end{equation}
as the image of the bottom horizontal map lies in $\ul{J}(G/K)$.
      
We have a  pullback diagram of $G\times \Sigma_m$-sets 
      \begin{equation}
            \label{JSUBMACKGPD_EQ}
            \begin{tikzcd}
                  {Z^{L,H}\!}  
                  \arrow[r]
                  &
                  {G/L}
                  \\
                  (Z^{L,H} \times_{G/L} G/K)
                  \arrow[r] \arrow[u,"q"]
                  &
                  G/K \arrow[u,"p",swap]
            \end{tikzcd}
      \end{equation}     
      in which all maps are surjective. 
      Then \cref{DOUBLECOSET_REM} gives a commutative diagram
\[
            \begin{tikzcd}
                  \displaystyle{
                   \ul{R}\left( Z^{L,H}  \right) 
                  }
                  \arrow[d,"q^*",swap] \arrow[r, color=\inductioncolor, "\Tr"]
                  &
                  \Rup ( G/L) \arrow[d, "\mathrm{Res}=p^*"]
                  \\
		\ul{R}\big( Z^{L,H}\times_{G/L}G/K \big)
                  \arrow[r, color=\inductioncolor, "\Tr"]
                  &
                  \Rup ( G/K).
            \end{tikzcd}
\]       
It remains to show that the bottom transfer map factors through the sum
\[  \bigoplus_{i+j=m}\ul{R}(G/K \times \Sigma_m/(\Sigma_i \times  \Sigma_j) ) \oplus
                    \bigoplus_{H' < K} \ul{R}\left( Z^{K,H'}  \right).
\]
We may decompose $Z^{L,H}\times_{G/L} G/K$ into $G\times \Sigma_m$-orbits, 
and it suffices to produce the factorization at the level of $G\times \Sigma_m$-sets on each orbit. 

Thus let $U\subset Z^{L,H}\times_{G/L}G/K$ be such an orbit, and choose $(\vect{x},y)\in U$. Then if $\Lambda\leq G\times \Sigma_m$ is the stabilizer of $(\vect{x},y)$, the presence of the factor $G/K$ forces $\pi_G(\Lambda)$ to be subconjugate to $K$. We now consider two cases.

In the first case, suppose that $\pi_{\Sigma_m}(\Lambda)$ is not a transitive subgroup. Then $\pi_{\Sigma_m}(\Lambda)$ is subconjugate to $\Sigma_i\times \Sigma_j$ for some positive $i$ and $j$ summing to $m$. It follows that there exists a map of $G\times \Sigma_m$-sets of the form
\[
      \begin{tikzcd}[row sep = tiny]
            U \arrow[r, "\iso"]
            &
            (G\times \Sigma_m)/\Lambda \arrow[r]
            &
            (G\times \Sigma_m )/(K \times \Sigma_i \times \Sigma_j )
            \\
            (\vect x, y) \arrow[r, mapsto]
            &
            e\Lambda \arrow[r, mapsto]
            &
            (g,\sigma)(K \times \Sigma_i \times \Sigma_j )
      \end{tikzcd}
\]
where $y=gK$. Then composing this map with the projection onto $G/K$ produces a map of $G\times \Sigma_m$-sets sending $(\vect{x},y)$ to $gK=y$. It follows that the image of the $\ul{R}$-transfer along 
$U \rtarr G/K$ is contained in the image of the $\ul{R}$-transfer along the $G$-cover $G/K \times  \Sigma_m/(\Sigma_i\times \Sigma_j) \rtarr G/K$.

In the second case, we suppose that $\pi_{\Sigma_m}(\Lambda)$ is a transitive subgroup of $\Sigma_m$.
We further assume for simplicity that $H$ and $K$ are subgroups of $L$, rather than merely subconjugate. 
The general case is similar but notationally more complex.

We now reduce to the case $L=G$: 
recall from \cref{Z_LEM} that the $G\times \Sigma_m$-set $Z^{L,H}_G$ is induced up from the subgroup $L \times  \Sigma_m $.
Since the $G$-set induction $G\times_L(-) \colon L\Set \rtarr G\Set$ preserves pullbacks, the pullback square of $L$-sets
\[
\begin{tikzcd}
Z^{L,H}_L \times L/K \ar[r] \ar[d]
& L/K \ar[d] \\
Z^{L,H}_L \ar[r] & L/L
\end{tikzcd} 
\]
gives rise to an isomorphism of $G\times \Sigma_m$-sets
\[ Z^{L,H}_G\times_{G/L} G/K \iso G\times_L \left( Z^{L,H}_L \times L/K\right).\]
Moreover, the projection $Z^{L,H}_G\times_{G/L} G/K \rtarr G/K$ is the induction from $L$ to $G$ of the projection $Z^{L,H}_L\times L/K \rtarr L/K$. 
We may therefore restrict to the case $L=G$.

We now return to the $G\times \Sigma_m$-orbit $U$ with chosen point $(\vect{x},y)$ and $\Lambda=\Stab_{G\times \Sigma_m}(\vect{x},y)$. 
Up to $\Sigma_m$-conjugacy, the tuple $\vect{x}$ is a $q$-fold shuffle.
We have assumed that $\pi_{\Sigma_m}(\Lambda) \leq \Sigma_m$ is transitive. Let $y=gK$. Then
\[ \Lambda = \Stab(\vect{x},y) = \Stab(\vect{x}) \cap \Stab(y) = \Stab(\vect{x}) \cap \big(gKg^{-1} \times \Sigma_m\big).\]
\cref{GRAPHSTAB_LEM} explicitly describes $\Stab(\vect{x})$ as $ \Sigma_q \wr_{\underline{n}} \Gamma(a)$ for some homomorphism $a\colon S \rtarr \Sigma_n$.
The intersection $\Lambda = \Stab(\vect{x})\cap (gKg^{-1}\times \Sigma_m)$ is now 
$\Sigma_q \wr_{\underline{n}} \Gamma(a\mid_{S\cap gKg^{-1}})$, where $a\mid_{S\cap gKg^{-1}}$ is the restriction of $a$ to $S\cap gKg^{-1}$. This is a 
subgroup of $ gKg^{-1} \times \Sigma_m $ that projects onto a transitive subgroup of $\Sigma_m$ by assumption. We may use the surjectivity statement of \cref{STABSURJ_PROP}  to describe this as a stabilizer of some element of 
\[ (gKg^{-1}/H')^{\times m} \setminus \Delta(gKg^{-1}/H')\]
for some $H'\leq gKg^{-1}$. It follows that $U \iso (G\times \Sigma_m)/\Lambda$ appears as an orbit of $Z^{gKg^{-1},H'} \iso Z^{K,g^{-1}H'g}$.
Therefore the image of the $\ul{R}$-transfer from $U$ is contained in the image of the $\ul{R}$-transfer from $Z^{K,g^{-1}H'g}$.
\end{proof}

\begin{rem}
By construction, a map of $G\times  \Sigma_m $-Green functors $\ul{R} \rtarr \ul{S}$ gives rise to a map of $G$-Green functors $\Rup/\ul{J} \rtarr \up{\ul{S}}/\ul{J}$.
\end{rem}

\subsection{Power operations on Green functors}
\label{GREEN_POWER_OPS_SEC}

For any $m\geq 0$, let 
\[\Frob\colon G\text{-Set} \rtarr G\times\Sigma_m\text{-Set}\] 
 be the $m$th power functor $\Frob(A) = A^{\times m}$, where $\Sigma_m$ permutes the factors and $G$ acts diagonally. 
Note that $\Frob$ preserves pullbacks, but not coproducts. 
Given a $G\times\Sigma_m$-Green functor $\ul{R}$, 
we will refer to
the composition $\ul{R}\circ \Frob$ as a ``non-additive $G$-Green functor.''
This means that it is a contravariant functor from $G$-Set to commutative rings 
admitting transfers along surjections, satisfying Frobenius reciprocity as in \cref{defn:GGreen}, 
and satisfying the push-pull property as in \cref{DOUBLECOSET_REM}.
It will not, however, necessarily send coproducts to direct sums.

Following \cref{ITR_DEF}, let $\ul{\bI}_{\Tr} \subseteq \ul{R}\circ \Frob$ be the  ideal 
defined by letting $\ul{\bI}_{\Tr}(A)$ be the image of the transfers
      \[
                 \bigoplus_{\substack{i+j=m \\ i,j>0}} 
                  \ul{R}\big(A^{\times m} \times \Sigma_m/(\Sigma_i \times \Sigma_j) \big)
                  \indxarr{\Tr} \ul{R}\big(A^{\times m} \times \Sigma_m/\Sigma_m \big).
      \]
Note that the target is $\ul{R}\circ \Frob(A)$.

\begin{defn}
      \label{MTP_DEF}
      An $m$th \textit{total power operation} on a $G\times \Sigma_m$-Green functor $\ul{R}$ 
is a natural transformation 
\[
\bP_m\colon \justdownwardarrow{\ul{R}}{G}{G \times \Sigma_m} \rtarr \ul{R}\circ \Frob
\]
as functors to Set
which preserves transfers
as well as
the multiplicative structure and such that $\bP_m/\ul{\bI}_{\Tr}$ is a map of
non-additive $G$-Green functors.
\end{defn}

We will consider two sources of $G \times \Sigma_m$-Green functors with $m$th total power operation in the following two subsections. 
These are $H_{\infty}$-rings in genuine $G$-spectra (\cref{GPOWER_OPS_SEC}) and  $G_\infty$-ring spectra in the sense of \cite[Remark~5.1.16]{global} (\cref{GLOBAL_OPS_SEC}).

For any $G$-set $A$, the diagonal inclusion $\Delta\colon A \into A^{\times m} = \Frob(A)$ is $G\times \Sigma_m$-equivariant. Pulling back along $\Delta$ defines a map of coefficient systems of commutative rings
\[ \ul{R} \circ \Frob \xrtarr{\Delta^*} 
\justupwardarrow{\ul{R}}{G\times \Sigma_m}{G}.
\]
The image of $\ul{\bI}_{\Tr}$ under $\Delta^*$ is $\ul{I}_{\Tr}$ (\cref{ITR_DEF}) by the double coset formula.
We then define the power operation $P_m$ as the composition
\[ P_m \colon \justdownwardarrow{\ul{R}}{G}{G \times \Sigma_m} \xrtarr{\bP_m} \ul{R}\circ \Frob
\xrtarr{\Delta^*} 
\justupwardarrow{\ul{R}}{G\times \Sigma_m}{G}.
\]
In general, the power operation $P_m$ is not additive, and it does not commute with the Mackey induction maps and
is therefore not a map of Mackey functors. Making use of \cite[VIII.1.4]{BMMS} and \cref{ITR_MACKEY}, additivity can be arranged by taking the quotient with respect to $\ul{I}_{\Tr}$.

\begin{prop}
      \label{Green_ITr_Thm}
      Let $\ul{R}$ be a $G\times\Sigma_m$-Green functor with an $m$th total power operation.  Then the composition
      \[
      \justdownwardarrow{\ul{R}}{G}{G \times \Sigma_m} \xrtarr{P_m}
      \justupwardarrow{\ul{R}}{G\times \Sigma_m}{G}
            \twoheadrightarrow
            \justupwardarrow{\ul{R}}{G\times \Sigma_m}{G}/\ul{I}_{\Tr}
      \]
      is a map of 
      coefficient systems of commutative rings. 
\end{prop}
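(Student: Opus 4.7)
The strategy is to factor the composition through the intermediate quotient $(\ul{R}\circ\Frob)/\ul{\bI}_{\Tr}$, so as to leverage the hypothesis that $\bP_m/\ul{\bI}_{\Tr}$ is already a map of $G$-Green functors. Concretely, I would first verify that the pullback map $\Delta^*\colon \ul{R}\circ\Frob \rtarr \justupwardarrow{\ul{R}}{G\times\Sigma_m}{G}$ sends $\ul{\bI}_{\Tr}$ into $\ul{I}_{\Tr}$. Granted this, $\Delta^*$ descends to a map
\[
\bar{\Delta}^*\colon (\ul{R}\circ\Frob)/\ul{\bI}_{\Tr} \longrightarrow \justupwardarrow{\ul{R}}{G\times\Sigma_m}{G}/\ul{I}_{\Tr},
\]
and the composition in the statement identifies with $\bar{\Delta}^*\circ(\bP_m/\ul{\bI}_{\Tr})$.

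To establish the containment $\Delta^*\bigl(\ul{\bI}_{\Tr}(G/H)\bigr) \subseteq \ul{I}_{\Tr}(G/H)$, set $A = G/H$ and apply the double-coset formula (\cref{DOUBLECOSET_REM}) to the pullback square of $G\times\Sigma_m$-sets
\[
\begin{tikzcd}
A^{\times m}\times \Sigma_m/(\Sigma_i\times\Sigma_j) \arrow[d, twoheadrightarrow] & A\times \Sigma_m/(\Sigma_i\times\Sigma_j) \arrow[l, "\Delta\times\id"'] \arrow[d, twoheadrightarrow] \\
A^{\times m} & A, \arrow[l, "\Delta"']
\end{tikzcd}
\]
whose vertical maps are surjections of finite $G\times\Sigma_m$-sets. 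This yields the identity $\Delta^*\circ \Tr = \Tr\circ(\Delta\times\id)^*$, so $\Delta^*$ carries each generator of $\ul{\bI}_{\Tr}(G/H)$ into the image of the transfer $\ul{R}\bigl(A\times\Sigma_m/(\Sigma_i\times\Sigma_j)\bigr)\indxarr{\Tr}\ul{R}(A)$. Under the isomorphism $A\times\Sigma_m/(\Sigma_i\times\Sigma_j)\iso (G\times\Sigma_m)/(H\times\Sigma_i\times\Sigma_j)$ of $G\times\Sigma_m$-sets, this target image is precisely one of the generating families for $\ul{I}_{\Tr}(G/H)$ listed in \cref{ITR_DEF}.

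It then remains to check that $\bar{\Delta}^*$ is itself a map of coefficient systems of commutative rings. Naturality with respect to restriction maps $G/H \rtarr G/K$ is immediate from naturality of $\Delta$ in the $G$-set argument, and $\Delta^*$ is levelwise a ring homomorphism because pullback maps in a Green functor are ring maps. Since $\bP_m/\ul{\bI}_{\Tr}$ is a map of $G$-Green functors by the very definition of an $m$th total power operation (\cref{MTP_DEF}), the composite $\bar{\Delta}^*\circ(\bP_m/\ul{\bI}_{\Tr})$ is a map of coefficient systems of commutative rings, and by construction coincides with the composition displayed in the statement.

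The only genuinely non-formal step is spotting the correct pullback square to which the double-coset formula should be applied; once written down, the rest is bookkeeping. The more substantial problem---replacing $\ul{I}_{\Tr}$ by the larger Mackey ideal $\ul{J}$ and thereby upgrading the conclusion from a map of coefficient systems to a bona fide map of $G$-Green functors, additionally compatible with Mackey inductions---is the content of \cref{JSUBMACKEY_LEM} and the subsequent \cref{Green_Main_Thm}.
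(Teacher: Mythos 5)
Your proof is correct and follows the same route the paper takes (the paper leaves this proposition without a formal proof, simply asserting just beforehand that $\Delta^*$ carries $\ul{\bI}_{\Tr}$ to $\ul{I}_{\Tr}$ and invoking the definition of an $m$th total power operation). Your double-coset-formula verification of the containment $\Delta^*\bigl(\ul{\bI}_{\Tr}(G/H)\bigr)\subseteq \ul{I}_{\Tr}(G/H)$, via the pullback square obtained by crossing $\Delta\colon A\to A^{\times m}$ with $\Sigma_m/(\Sigma_i\times\Sigma_j)\to \Sigma_m/\Sigma_m$, is exactly the detail the paper elides, and the rest of your argument matches the intended one.
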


Further passing to the quotient with respect to $\ul{J}$ produces a map of $G$-Green functors.

\begin{theorem}
\label{Green_Main_Thm}
      Let $\ul{R}$ be a $G\times \Sigma_m$-Green functor with an $m$th total power operation, and 
      let $\ul{J}$ be as in \cref{JGL_DEF}.
      Then 
      the reduced $m$th power operation
      \[
      P_m/\ul{J} \colon 
           \justdownwardarrow{\ul{R}}{G}{G \times \Sigma_m}
            \xrightarrow{\bP_m} \ul{R}\circ\Frob
            \xrightarrow{\Delta^*}
            \justupwardarrow{\ul{R}}{G\times \Sigma_m}{G}
            \twoheadrightarrow
            \justupwardarrow{\ul{R}}{G\times \Sigma_m}{G}/\ul{J}
      \]
      is a map of  $G$-Green functors.
\end{theorem}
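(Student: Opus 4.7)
The plan is to verify that the reduced operation $P_m/\ul{J}$ respects all three ingredients of $G$-Green functor structure: ring multiplication at each level, compatibility with restriction maps, and compatibility with transfer (induction) maps. Since $\ul{I}_{\Tr} \subseteq \ul{J}$ by the construction of $\ul{J}$ in \cref{JGL_DEF}, \cref{Green_ITr_Thm} already handles the first two properties, as both pass through the further quotient by $\ul{J}$. Hence the substantive content of the theorem is the verification that $P_m/\ul{J}$ commutes with transfers.

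To establish transfer compatibility, fix subgroups $H \leq L \leq G$ and an element $x \in \downwardarrow{\ul{R}}{G}{G \times \Sigma_m}{G/H}$. I would compute $P_m(\Tr(x))$ in three stages. First, because $\bP_m$ is a natural transformation of $G$-Mackey functors of sets (\cref{MTP_DEF}), it commutes with the $\ul{R}$-transfer along $(G\times\Sigma_m)/(H\times e) \to (G\times\Sigma_m)/(L\times e)$, giving $\bP_m(\Tr(x)) = \Tr(\bP_m(x))$ in $\ul{R}\big((G/L)^{\times m}\big)$. Second, I would apply the push--pull formula \cref{PUSHPULL_COR_TWO} to the pullback square of $G\times\Sigma_m$-sets with corners $(G/H)^{\times m}$, $(G/H)^{\times_{G/L} m}$, $(G/L)^{\times m}$, and $G/L$, whose bottom horizontal is the diagonal $\Delta$. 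This yields
\[
\Delta^{*} \Tr\big(\bP_m(x)\big) = \Tr\big(i^{*}\bP_m(x)\big),
\]
where $i \colon (G/H)^{\times_{G/L} m} \hookrightarrow (G/H)^{\times m}$ is the canonical inclusion.

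Third, I would exploit the decomposition \eqref{IterFiber_Decomp}, namely $(G/H)^{\times_{G/L} m} \cong G/H \amalg Z^{L,H}$ as $G\times\Sigma_m$-sets (with $G/H$ carrying the trivial $\Sigma_m$-action), which induces the ring splitting $\ul{R}\big((G/H)^{\times_{G/L} m}\big) \cong \Rup(G/H) \times \ul{R}(Z^{L,H})$. Under this splitting, $i^{*}\bP_m(x)$ corresponds to the pair $\bigl(P_m(x),\, \bP_m(x)|_{Z^{L,H}}\bigr)$, since the $G/H$-component of $i$ is exactly the diagonal $G/H \to (G/H)^{\times m}$ and $P_m = \Delta^{*}\circ\bP_m$ by definition. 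Applying the transfer $\Tr\colon \ul{R}\big((G/H)^{\times_{G/L} m}\big) \to \Rup(G/L)$ then gives
\[
P_m(\Tr(x)) = \Delta^{*}\bP_m(\Tr(x)) = \Tr(P_m(x)) + \Tr\big(\bP_m(x)|_{Z^{L,H}}\big).
\]
The second summand lies in $\ul{J}(G/L)$ by the defining clause of \cref{JGL_DEF}, so modulo $\ul{J}(G/L)$ the reduced operation $P_m/\ul{J}$ intertwines the transfer maps, as required.

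The main obstacle has in fact been absorbed into the preliminary work: both the explicit design of $\ul{J}$ as the ideal generated by transfers from the complementary pieces $Z^{L,H}$, and the fact (established in \cref{JSUBMACKEY_LEM}) that these level-wise ideals actually assemble into a sub-Mackey functor of $\Rup$. Given those inputs together with the definition of an $m$th total power operation and the double-coset formula, the final step is essentially a diagram chase.
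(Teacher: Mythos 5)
Your proposal is correct and follows essentially the same route as the paper's proof: reduce to transfer-compatibility via \cref{Green_ITr_Thm}, use naturality of $\bP_m$ together with the push--pull square of \cref{PUSHPULL_COR_TWO}, and then split $\ul{R}\big((G/H)^{\times_{G/L} m}\big)$ via \eqref{IterFiber_Decomp} so that the $Z^{L,H}$-summand transfers into $\ul{J}(G/L)$ by \cref{JGL_DEF}. The paper packages your final step as \cref{JCOMPAT_LEM} and, like you, invokes \cref{JSUBMACKEY_LEM} to make the quotient transfer well defined, so the two arguments coincide.
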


The argument below follows the strategy outlined in \cref{OVERVIEW_SEC}.

\begin{proof}
      By \cref{JSUBMACKEY_LEM}, $\ul{J}$ is a Mackey functor ideal in $ \justupwardarrow{\ul{R}}{G\times \Sigma_m}{G}$.
      Thus $ \justupwardarrow{\ul{R}}{G\times \Sigma_m}{G}/\ul{J}$ is a  $G$-Green functor.
      Since $\ul{I}_{\Tr}$ is contained in $\ul{J}$, \cref{Green_ITr_Thm} implies that 
      it remains to show that $P_m/\ul{J}$ commutes with induction maps. Thus suppose that $H\leq G$ is subconjugate to $L\leq G$.
      
      Since the total power operation $\bP_m$ is a map of Mackey functors (of sets),
\cref{PUSHPULL_COR_TWO}  implies that we have the following commuting diagram: 
\begin{equation}
      \label{POWERTRANS_EQ_2}
      \begin{tikzcd}
            \downwardarrow{\ul{R}}{G}{G \times \Sigma_m}{G/H} \arrow[d, color=\inductioncolor, "\Tr"'] \arrow[r, "\bP_m"]
            &
             \ul{R}\left(  (G/H)^{\times m} \right) \arrow[r,"i^*"] \arrow[d, color=\inductioncolor, "\Tr"']
            &
             \ul{R}\left( (G/H)^{\times_{G/L} m} \right) \arrow[d, color=\inductioncolor, "\Tr"]
            \\
           \downwardarrow{\ul{R}}{G}{G \times \Sigma_m}{G/L} \arrow[r, "\bP_m"]
            &
             \ul{R}\left(  (G/L)^{\times m} \right) \arrow[r, "\Delta^{\**}"]
            &
            \ul{R}\left( G/L \right).
      \end{tikzcd}
\end{equation}
Note that in the bottom right corner, $G/L$ has a trivial $\Sigma_m$-action, 
so that $\ul{R}(G/L)$ is $ \upwardarrow{\ul{R}}{G\times \Sigma_m}{G}{G/L} $.
\cref{JCOMPAT_LEM} states that we have a commuting diagram
      \[
            \begin{tikzcd}
                  \ul{R}\left( (G/H)^{\times_{G/L} m} \right) \arrow[d, "\Tr",color=\inductioncolor] \arrow[r, twoheadrightarrow]
                  &
                    \upwardarrow{\ul{R}}{G\times \Sigma_m}{G}{G/H}  /\ul{I}_{\Tr}(G/H) \arrow[d, "\Tr",color=\inductioncolor]
                  \\
                 \upwardarrow{\ul{R}}{G\times \Sigma_m}{G}{G/L} \arrow[r,twoheadrightarrow]
                  &
                  \upwardarrow{\ul{R}}{G\times \Sigma_m}{G}{G/L}  / \ul{J}(G/L).
            \end{tikzcd}
      \]
      The result then follows by the factorization
         \[
            \begin{tikzcd}
                 \upwardarrow{\ul{R}}{G\times \Sigma_m}{G}{G/H} / \ul{I}_{\Tr}(G/H) \arrow[d, "\Tr",color=\inductioncolor] \ar[r,twoheadrightarrow]
                 &
                \upwardarrow{\ul{R}}{G\times \Sigma_m}{G}{G/H}  /\ul{J}(G/H) \arrow[d, "\Tr",color=\inductioncolor]
                  \\
                 \upwardarrow{\ul{R}}{G\times \Sigma_m}{G}{G/L}  / \ul{J}(G/L) \ar[r,equal] 
                  & 
                 \upwardarrow{\ul{R}}{G\times \Sigma_m}{G}{G/L}  / \ul{J}(G/L),
            \end{tikzcd}
      \]
      which occurs since $\ul{J}$ is a Mackey ideal containing $\ul{I}_{\Tr}$.   
\end{proof}

\subsection{Power operations on $G$-spectra}
\label{GPOWER_OPS_SEC}

We first consider $H_\infty$-ring spectra, in the sense of \cite{BMMS}, in the equivariant category. 

\begin{definition}
\label{HRING_DEF}
      An \textit{$H_\infty$-ring $G$-spectrum} is a $G$-spectrum $E$ equipped with 
      $G$-equivariant maps $\mu \colon E^{\wedge m}_{h\Sigma_m} \rtarr E$ which make the diagrams of \cite[I.3]{BMMS}
      commute in the $G$-equivariant stable homotopy category.
\end{definition}

We recall that the structure of an $H_\infty$-ring on a (nonequivariant) spectrum $E$ is the vestige in the homotopy category of a point-set $E_\infty$-structure on $E$. 
Said differently, let us write $\mathrm{Sym}$ for the free $E_\infty$-algebra monad on spectra. This descends to a monad $h\mathrm{Sym}$ on the homotopy category, and an $H_\infty$-ring spectrum is an $h\mathrm{Sym}$-algebra.
Similarly so if $E$ is a $G$-spectrum, where now `$E_\infty$' is taken in the non-equivariant sense, meaning that $G$ acts trivially on the universal spaces $E\Sigma_m$  of the $E_\infty$ operad.

As noted in \cref{ringGGreen}, every homotopy-commutative ring $G$-spectrum $E$ induces a Green functor-valued equivariant cohomology theory on $G$-spaces,
defined by
\[
      \ul{E}^0(X)(G/H) = E^0(G/H \times X) = [( G/H \times X)_+,  E]^G,
\]
where $[-,-]^G$ denotes the abelian group of maps in the
equivariant stable homotopy category.

If $E$ is moreover an $H_\infty$-ring $G$-spectrum, more is true:

\begin{proposition}
\label{GRing_PowerOps}
      If $E$ is an $H_\infty$-ring $G$-spectrum and $X$ is a $G$-space, then
      the $G \times \Sigma_m$-Green functor given by 
      $\ul{R}(A) = E^0(X\times A_{h\Sigma_m})$,
      as in \cref{BorelGSigma},
      has an $m$th total power operation.
\end{proposition}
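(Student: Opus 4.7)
The plan is to construct $\bP_m$ directly from the $H_\infty$-structure on $E$ and verify each clause of \cref{MTP_DEF} in turn. Given a $G$-equivariant map $f \colon \Sigma^\infty_G (X \times G/H)_+ \rtarr E$ representing a class in $\downwardarrow{\ul{R}}{G}{G\times\Sigma_m}{G/H} = E^0(X \times G/H)$, I would form the $m$th smash power $f^{\wedge m} \colon (X \times G/H)_+^{\wedge m} \rtarr E^{\wedge m}$, which is $G \times \Sigma_m$-equivariant (with $\Sigma_m$ permuting factors and $G$ acting diagonally). Passing to $\Sigma_m$-homotopy orbits and composing with the $H_\infty$ structure map $E^{\wedge m}_{h\Sigma_m} \rtarr E$, then precomposing with the $(G\times\Sigma_m)$-equivariant diagonal $X \rtarr X^{\times m}$ (where $\Sigma_m$ acts trivially on the source), produces a $G$-map into $E$ out of $(X \times (G/H)^{\times m})_{h\Sigma_m}$. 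This is our $\bP_m(f) \in \ul{R}((G/H)^{\times m})$.

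Naturality with respect to restriction maps $G/H \rtarr G/K$ is immediate, since both the $m$th smash power and the diagonal are functorial. For commutativity with transfers (as a map of $G$-Mackey functors of sets), the stable transfer of an equivariant cover $G/H \rtarr G/K$ smashed $m$-fold equals, up to the standard identifications, the transfer for the product cover $(G/H)^{\times m} \rtarr (G/K)^{\times m}$ in the $(G\times\Sigma_m)$-equivariant stable category. Passing to $\Sigma_m$-homotopy orbits and comparing diagrams, one obtains the required equality $\bP_m \circ \Tr = \Tr \circ \bP_m$ where the right-hand transfer is the Mackey structure on $\ul{R} \circ \Frob$. Multiplicativity $\bP_m(fg) = \bP_m(f)\bP_m(g)$ is a formal consequence of commutativity of smash products together with the compatibility of the $H_\infty$-map with the ring structure on $E$.

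Finally, I would deduce additivity of $\bP_m/\ul{\bI}_{\Tr}$ from \cite[Proposition VIII.1.4(iv)]{BMMS}. That non-equivariant statement identifies the defect of $\bP_m(f+g) - \bP_m(f) - \bP_m(g)$ as a sum of transfers along $\Sigma_i \times \Sigma_j \hookrightarrow \Sigma_m$ for $i,j > 0$ with $i+j=m$; its proof uses only formal manipulations in the stable homotopy category and applies verbatim in the $(G\times\Sigma_m)$-equivariant setting by working with $G\times\Sigma_m$-spectra throughout. These defect terms are precisely the generators of $\ul{\bI}_{\Tr}(G/H)$ pulled back along the diagonal; together with multiplicativity, this shows $\bP_m/\ul{\bI}_{\Tr}$ is a ring map, hence a map of $G$-Green functors.

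The main obstacle I anticipate is the transfer compatibility in the second step: smash powers of equivariant transfer maps must be identified with transfers for the associated equivariant product covers. This is clean in the non-equivariant setting but requires care equivariantly because the $(G\times\Sigma_m)$-set $(G/H)^{\times m}$ has a nontrivial orbit decomposition; the identification can be carried out orbit-by-orbit using the standard description of equivariant transfers for $G$-covers and the multiplicativity of transfers under products of covers.
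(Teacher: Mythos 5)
Your construction of $\bP_m$ (smash power of a representing map, $\Sigma_m$-homotopy orbits, the $H_\infty$ structure map, then pullback along the diagonal of $X$) and your verification strategy coincide with the paper's proof, which defines $\bP_m$ levelwise by exactly this composite and then appeals to naturality in all stable maps, multiplicativity, and \cite[VIII.1.1]{BMMS} for additivity modulo $\ul{\bI}_{\Tr}$. The only difference is one of emphasis: you make explicit the identification of extended powers of equivariant transfer maps with transfers of the associated product covers, which the paper compresses into the phrase ``natural in all stable maps by construction.''
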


\begin{proof}
In this case, 
\[
 \downwardarrow{\ul{R}}{G}{G \times \Sigma_m}{G/H} = E^0(X\times G/H)
 \]
 and
 \[
 \ul{R}\circ\Frob(G/H) = E^0(X\times (G/H)^{\times m}_{h\Sigma_m}).\
 \]
      We define $\bP_m$ levelwise to be the composite
\begin{align*}
   [(X \times G/H )_+, E]^G
                          \to 
                          [(X \times G/H  )^{\times m}_{h \Sigma_m,_+}, E^{\wedge m}_{h \Sigma_m}]^G
                         & \xrightarrow{\mu}
       [(X \times G/H)^{\times m}_{h \Sigma_m,+}, E]^G
  \\ & 
       \xrightarrow{\Delta_{X}^{\**}}
       [ (X\times (G/H)^{\times m}_{h \Sigma_m})_+,E]^G,
\end{align*}
where $\Delta_X\colon X \times (G/H)^{\times m} \to X^{\times m} \times (G/H)^{\times m} \cong (X\times G/H)^{\times m}$ is the $G$-equivariant map induced by the diagonal on $X$.
The map $\bP_m$ satisfies the requirements of \cref{MTP_DEF}:
      it is natural in all stable maps by construction, and so in particular is a map of $G$-Mackey functors of sets;
      it is multiplicative; and
      it is additive after passing to the quotient by $\ul{\bI}_{\Tr}$ by  \cite[VIII.1.1]{BMMS}.
\end{proof}

\begin{rem}
\label{RX_REM}
      We note that the function spectrum
      $E^X$ is an $H_\infty$-ring $G$-spectrum whenever $E$ is an $H_\infty$-ring $G$-spectrum and $X$ is a $G$-space.\footnote{This is false if we replace $X$ with a $G$-spectrum $Y$, as the diagonal map $X \to X^{\times n}$ plays a key role in this structure.}
      Thus the power operation $\bP_m$ for $E$ defined at $X$ in \cref{GRing_PowerOps} 
      agrees with the power operation $\bP_m$ for $E^X$ at $G/G$.
      Thus, without loss of generality we may assume $X = G/G$ throughout.
\end{rem}

The target of the power operation $P_m$ is $\underline{E}^0(B \Sigma_m)$ \eqref{BorelGInd_EQ}.
Thus, \cref{JGL_DEF} yields an ideal
$\ul{J}(G/L) \subseteq E^0(G/L \times B \Sigma_m)$
generated by the images of the transfers along:
\begin{itemize}
\item the covers $G/L \times B \Sigma_i \times B \Sigma_j \to G/L \times B \Sigma_m $ for $i + j = m$ and
\item the composition $Z^{L,H}_{h\Sigma_m} \into (G/H)^{\times_{G/L} m}_{h\Sigma_m} \to  G/L \times  B \Sigma_m$ for $H \leq L$.
\end{itemize}

The following is then an immediate corollary of \cref{Green_Main_Thm}.

\begin{corollary}
\label{GEquiv_Main_Thm}
      For any $H_\infty$-ring $G$-spectrum $E$,
      the reduced $m$th power operation
      \[
         P_m/\ul{J}\colon   \underline{E}^0 \xrightarrow{P_m} \underline{E}^0(B \Sigma_m) \twoheadrightarrow \underline{E}^0(B \Sigma_m)/\underline{J}
    \]
    is a map of $G$-Green functors.
\end{corollary}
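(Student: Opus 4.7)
The plan is to deduce this as a direct specialization of \cref{Green_Main_Thm}, applied to the $G \times \Sigma_m$-Green functor $\ul{R}$ of \cref{BorelGSigma} associated to the $H_\infty$-ring $G$-spectrum $E$, namely
\[
\ul{R}\bigl((G \times \Sigma_m)/\Lambda\bigr) = E^0\Bigl(\bigl((G \times \Sigma_m)/\Lambda\bigr)_{h\Sigma_m}\Bigr).
\]
By \cref{RX_REM}, it suffices to work at $X = G/G$, since the power operation for $E^X$ at $G/G$ recovers the power operation for $E$ at $X$.

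First, I would invoke \cref{GRing_PowerOps} to conclude that $\ul{R}$ carries an $m$th total power operation $\bP_m$ in the sense of \cref{MTP_DEF}; the requisite compatibility with Mackey transfers of sets, multiplicativity, and additivity after reduction modulo $\ul{\bI}_{\Tr}$ are all verified in that proposition via the structure maps $E^{\wedge m}_{h\Sigma_m} \to E$ and \cite[VIII.1.1]{BMMS}. Next, I would identify the source and target of the resulting abstract operation with the ones appearing in the statement: from the computations \eqref{BorelGRes_EQ} and \eqref{BorelGInd_EQ} in \cref{BorelGSigma}, there are natural isomorphisms of $G$-Green functors
\[
\justdownwardarrow{\ul{R}}{G}{G \times \Sigma_m} \iso \ul{E}^0
\qquad \text{and} \qquad
\justupwardarrow{\ul{R}}{G \times \Sigma_m}{G} \iso \ul{E}^0(B\Sigma_m),
\]
the former using the contractibility of $E\Sigma_m$ and the freeness of its $\Sigma_m$-action, the latter using the isomorphism $(G/H \times \Sigma_m)_{h\Sigma_m} \simeq G/H \times B\Sigma_m$.

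With these identifications in hand, \cref{JSUBMACKEY_LEM} produces the Mackey ideal $\ul{J} \subseteq \ul{E}^0(B\Sigma_m)$ described just before the statement of the corollary, and \cref{Green_Main_Thm} yields immediately that the composite
\[
P_m/\ul{J} \colon \ul{E}^0 \xrtarr{P_m} \ul{E}^0(B\Sigma_m) \twoheadrightarrow \ul{E}^0(B\Sigma_m)/\ul{J}
\]
is a map of $G$-Green functors, which is exactly the desired conclusion.

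Because the corollary is flagged in the text as an immediate consequence of \cref{Green_Main_Thm}, there is no substantive obstacle; the only point requiring a moment of care is verifying that the abstract power operation $P_m = \Delta^{\**} \circ \bP_m$ produced from the Green-functor framework agrees, under the identifications of source and target above, with the classical additive power operation of \eqref{ADDITIVEPOWEROP_EQ}. This compatibility is built into the construction of $\bP_m$ in the proof of \cref{GRing_PowerOps}, where $\Delta^{\**}$ corresponds to the diagonal $B\Sigma_m \to B\Sigma_m$ and the resulting composite is by definition the Borel-level $P_m$.
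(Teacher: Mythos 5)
Your proposal is correct and follows exactly the paper's route: the corollary is obtained by specializing \cref{Green_Main_Thm} to the $G\times\Sigma_m$-Green functor of \cref{BorelGSigma}, using \cref{GRing_PowerOps} to supply the $m$th total power operation and the identifications \eqref{BorelGRes_EQ}, \eqref{BorelGInd_EQ} of the restricted and induced Green functors with $\ul{E}^0$ and $\ul{E}^0(B\Sigma_m)$. The paper itself records this as an immediate consequence, so no further argument is needed.
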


\begin{remark}
By \cref{RX_REM}, for any $G$-space $X$, we get a map of $G$-Green functors
\[ P_m/\ul{J}\colon \ul{E}^0(X) \rtarr \ul{E}^0( X \times B\Sigma_m )/\ul{J},\]
where $\ul{J} \subset \ul{E}^0( X \times B\Sigma_m )$ is defined as above by crossing with $X$.
\end{remark}

\subsection{Power operations on global spectra}
\label{GLOBAL_OPS_SEC}

Many examples of a $G \times \Sigma_m$-Green functor with an $m$th total power operation arise in global stable homotopy theory. For instance, equivariant $K$-theory and equivariant bordism theories are examples of $G_{\infty}$-ring spectra (see \cite[Chapter 6]{global}), which is the global analogue of an $H_{\infty}$-ring spectrum, and these give rise to 
$G\times \Sigma_m$-Green functors
for each choice of $G$ and $m \geq 1$ .

Let $X$ be a global spectrum for the family of finite groups in the sense of \cite{global}. There is an underlying genuine $G$-spectrum $X_G$ associated to $X$ for each finite group $G$. Given a based $G$-space $Y$, the $G$-equivariant zeroeth $X$-cohomology of $Y$ is 
\[
X_{G}^{0}(Y) = [Y,X_{G}]^G.
\]
This gives the $G$-Mackey functor $\ul{X}_{G}^{0}$ with value
\[
\ul{X}_{G}^{0}(A) = [A_+,X_{G}]^G,
\]
when $A$ is a finite $G$-set. A homomorphism $\alpha \colon G' \to G$ induces a restriction map 
\[
\alpha^* \colon [Y,X_{G}]^G \to [Y,X_{G'}]^{G'},
\]
in which $Y$ is viewed as a pointed $G'$-space through $\alpha$ in the target.

If $E$ is a homotopy commutative global spectrum, then $E_G$ is a homotopy commutative genuine $G$-spectrum for each $G$ (see the discussion leading up to \cite[Theorem 4.5.24]{global}). It follows that $\ul{E}_{G}^{0}$ is a $G$-Green functor. 

More refined is the notion of a $G_{\infty}$-ring spectrum (see \cite[Remark~5.1.16]{global} or \cite{G}), which is the global analogue of an $H_{\infty}$-ring spectrum. This structure endows cohomology with multiplicative total power operations (\cite[Remark 5.1.14]{global}). Given a pointed $G$-space $Y$, these are multiplicative operations
\begin{equation} \label{globalpower}
[Y,E_G]^G \to [Y^{\wedge m}, E_{G \wr \Sigma_m}]^{G \wr \Sigma_m},    
\end{equation}
for each $m \geq 1$.

\begin{proposition}
\label{UltraComm_PowerOps}
      If $E$ is a $G_\infty$-ring spectrum, then
      the $G \times \Sigma_m$-Green functor given by 
      \[
      \ul{R}(A) = \ul{E}^{0}_{G\times \Sigma_m}(A) = [A_+, E_{G \times \Sigma_m}]^{G \times \Sigma_m},
      \]
      for a $G \times \Sigma_m$-set $A$,
      has an $m$th total power operation.
\end{proposition}
\begin{proof}
Notice that $\downwardarrow{\ul{R}}{G}{G\times\Sigma_m}{G/H} = [(G/H)_+,E_G]^G$ and that 
\[
(\ul{R} \circ F^m)(G/H) = [((G/H)^{\times m})_+, E_{G \times \Sigma_m}]^{G \times \Sigma_m}.
\]

Taking $Y = (G/H)_+$ (as a $G$-set) in \eqref{globalpower} gives the total power operation
\[
[(G/H)_+,E_G]^G \to [((G/H)^{\times m})_+, E_{G \wr \Sigma_m}]^{G \wr \Sigma_m}.
\]
Restricting along $\alpha \colon G \times \Sigma_m \to G \wr \Sigma_m$ then gives an $m$th total power operation in the sense of \cref{MTP_DEF}:
\[
\bP_m \colon \downwardarrow{\ul{R}}{G}{G\times\Sigma_m}{G/H} = [(G/H)_+,E_G]^G \to [((G/H)^{\times m})_+, E_{G \times \Sigma_m}]^{G \times \Sigma_m} = (\ul{R} \circ F^m)(G/H).
\]
\end{proof}

Continuing to use the notation of \cref{UltraComm_PowerOps}, we have that 
\[
\upwardarrow{\ul{R}}{G \times \Sigma_m}{G}{G/H} = \ul{E}_{G \times \Sigma_m}^{0}((G\times \Sigma_m)/(H\times \Sigma_m)).
\]
Now \cref{JG_ACTION_NONBOREL_THM} gives an explicit description of $\ul{J}(G/H) \subseteq \upwardarrow{\ul{R}}{G \times \Sigma_m}{G}{G/H}$.

The following is then an immediate corollary of \cref{Green_Main_Thm}.

\begin{cor}
\label{Global_Main_Thm}
      Let   $E$ be a $G_\infty$-ring spectrum and let $\ul{R}$ be the $G \times \Sigma_m$-Green functor given by 
      $\ul{R}(A) = \ul{E}_{G \times \Sigma_m}^{0}(A)$, where $A$ is a $G \times \Sigma_m$-set.
      Then the reduced $m$th power operation
      \[
\justdownwardarrow{\ul{R}}{G}{G\times\Sigma_m}{} \to \justupwardarrow{\ul{R}}{G \times \Sigma_m}{G}{} \to \justupwardarrow{\ul{R}}{G \times \Sigma_m}{G}{}/\ul{J}
      \]
      is a map of  $G$-Green functors.
\end{cor}

\begin{rem}
A $G_{\infty}$-ring spectrum gives rise to a global power functor in the sense of both  \cite{Ganter} and \cite[Definition 5.1.6]{global}. A proof of this fact is given in \cite[Theorem 5.1.11]{global}. It is possible to formulate the discussion in this section in these terms.
\end{rem}


\setcounter{MaxMatrixCols}{20}

\setcounter{secnumdepth}{3}

\section{Examples}\label{ExamplesSection}

In this section, we calculate the ideal $\ul{J}$ in a number of examples.
In certain cases, we identify $\ul{J}$ for all finite groups $G$.
Throughout this section, Mackey induction homomorphisms will be displayed in 
\textcolor{\inductioncolor}{\inductioncolorname},
whereas power operations will be displayed in \textcolor{\powercolor}{\powercolor}.
We will deal with many global Green functors in this section, and we will abbreviate an induction such as 
$\justupwardarrow{(\ul{A}_{G\times \Sigma_m})\hspace{-0.1ex}}{G\times \Sigma_m}{G}$ to
$\MackUp{A}$.

\subsection{Ordinary cohomology}
\label{sec:HG}

We begin with ordinary cohomology. This case turns out to be degenerate, in the sense that $\ul{J}$ is equal to $\ul{I}_{\Tr}$.

Let $\ul{R}$ be a $G$-Green functor and $H\ul{R}$ the $G$-equivariant Eilenberg-Mac~Lane spectrum.
We will show
that the composition
\[ \mf{R} = \rH \ul{R}^0 \powerxarr{P_m} \rH \mf{R}^0(B\Sigma_m) \rtarr \rH \mf{R}^0(B\Sigma_m)/\mf{I}_{\Tr}\]
is already a map of Mackey functors in the case of ordinary cohomology, and thus $\ul{J}=\ul{I}_{\Tr}$.
At a $G$-orbit $G/K$, the $m$th power operation $P_m$ is $\ul{R}(G/K) \powerxarr{(-)^m} \ul{R}(G/K)$.

\begin{lemma} If $m=p^r$, where $p$ is prime, then for any $K\subset G$, the ideal $\mf{I}_{\Tr}(G/K)\subset \ul{R}(G/K)$ is the principal ideal $(p)$.
      On the other hand, if $m$ has multiple prime factors, then $\mf{I}_{\Tr}(G/K)=(1)$.
\end{lemma}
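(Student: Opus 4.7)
The plan is to reduce the computation of $\mf{I}_{\Tr}(G/K)$ to a classical greatest-common-divisor identity for binomial coefficients. First, I will identify the generators of $\mf{I}_{\Tr}(G/K)$ concretely as elements of $\ul{R}(G/K)$. By \cref{ITR_DEF} applied to the Borel $G\times\Sigma_m$-Green functor associated to $H\ul{R}$ (cf.\ \cref{BorelGSigma}), this ideal is generated by the images of the transfers
\[ \rH\ul{R}^0\!\big(G/K\times B\Sigma_i\times B\Sigma_j\big) \indxarr{\Tr} \rH\mf{R}^0(B\Sigma_m)(G/K) \]
for $0<i<m$ with $i+j=m$. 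Since $B\Sigma_i\times B\Sigma_j$ and $B\Sigma_m$ are both connected with trivial $G$-action, degree-$0$ Bredon cohomology canonically identifies both source and target with the ring $\ul{R}(G/K)$.

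Second, I will identify these transfer maps under the above identifications. The underlying map of $G$-spaces is $\id_{G/K}$ times the finite cover $B\Sigma_i\times B\Sigma_j\to B\Sigma_m$ of degree $\binom{m}{i}=[\Sigma_m:\Sigma_i\times\Sigma_j]$; the latter cover is purely non-equivariant, so the associated stable transfer factors as the identity on $G/K$ times the non-equivariant transfer, which acts on $\rH^0$ as multiplication by $\binom{m}{i}$. Consequently,
\[ \mf{I}_{\Tr}(G/K)=\bigl(\gcd_{0<i<m}\tbinom{m}{i}\bigr)\cdot \ul{R}(G/K). \]

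The proof is then completed by the classical fact that this gcd equals $p$ when $m=p^r$ and equals $1$ otherwise. For $m=p^r$, the identity $i\binom{p^r}{i}=p^r\binom{p^r-1}{i-1}$ forces $p\mid\binom{p^r}{i}$ for every $0<i<p^r$, while Kummer's theorem on $p$-adic valuations gives $v_p\binom{p^r}{p^{r-1}}=1$, pinning the gcd at exactly $p$. When $m$ has at least two distinct prime factors, Lucas's theorem shows that for each prime $p\mid m$ the binomial $\binom{m}{p^{v_p(m)}}$ is coprime to $p$, so the gcd has no prime divisor and must equal $1$.

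I do not anticipate a genuine obstacle here. The only step warranting care is the identification of the Mackey transfer with scalar multiplication by $\binom{m}{i}$; this is clean once one notes that the relevant cover of $G$-spaces is the product of $\id_{G/K}$ with a cover having trivial $G$-action on its fibers.
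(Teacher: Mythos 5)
Your proposal is correct and follows essentially the same route as the paper: identify each transfer as multiplication by the index $\binom{m}{i}$ of the cover $B\Sigma_i\times B\Sigma_j\to B\Sigma_m$, so that $\mf{I}_{\Tr}(G/K)$ is generated by $\gcd_{0<i<m}\binom{m}{i}$, and then compute that gcd. The only cosmetic differences are in the arithmetic (you invoke Kummer's theorem where the paper uses the congruence $\binom{p^r}{p^{r-1}}\equiv p \pmod{p^r}$), and in both the $p^r$ case and the composite case you should make explicit that the generator $\binom{m}{1}=m$ forces every prime divisor of the gcd to divide $m$ --- without that one-line observation, your argument only pins down the $p$-adic valuations of the gcd, not the gcd itself.
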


\begin{proof}
Let $m=p^r$. We will abbreviate $\mf{I}_{\Tr}(G/K)$ simply to $I_{\Tr}$.
For $i+j=m$, the cover $B\Sigma_i \times B \Sigma_j \rtarr B\Sigma_m$ induces a transfer map 
\[\rH\mf{R}^0(B\Sigma_i\times B\Sigma_j)(G/K) \iso \ul{R}(G/K) \indxarr{\Tr}  \rH\mf{R}^0(B\Sigma_m)(G/K) \iso \ul{R}(G/K), \]
which is multiplication by the index of the cover,  the binomial coefficient $\binom{m}{i}=\frac{m!}{i!\,j!}$. 
Since $0<i<p^r$, all of these coefficients are multiples of $p$, and we conclude that $I_{\Tr} \subset (p)$. 
Taking $i=1$ shows that $p^r\in I_{\Tr}$. In the case $i=p^{r-1}$,  the coefficient $\binom{p^r}{p^{r-1}}$ is congruent to $p$ modulo $p^r$. 
It follows that $p \in I_{\Tr}$.

Suppose, on the other hand, that $m$ is divisible by distinct primes $p_i$. 
By Lucas' Theorem, if $r_i$ is the largest integer such that $p_i^{r_i}$ divides $m$, then $\binom{m}{p_i^{r_i}}$ is prime to $p_i$. It follows that the collection of coefficients $\{ \binom{m}{p_i^{r_i}}\}$ are relatively prime to $\binom{m}1=m$, and therefore generate the ideal $(1)$.
\end{proof}

Thus in the interesting case $m=p^r$, the composition $\ul{R} \powerxarr{\,\,\,(-)^m}\ul{R} \rtarr \ul{R}/\mf{I}_{\Tr}$ 
can be expressed as the composition of two maps of $G$-Green functors: the quotient map $\ul{R} \powerarr \ul{R}/p$ followed by the $p^r$-power map.

\subsection{The  sphere spectrum}
\label{sec:sphere}

We start with the global sphere spectrum $\bS$, which is a
$G_\infty$-ring spectrum \cite[Example~4.2.7]{global}.
Recall that 
$\ul{\pi}^0_G=\ul{\bS}^0_G$
 is isomorphic to the Burnside ring Mackey functor, $\mf{A}_G$ \cite[Corollary to Proposition~1]{Seg}.
Thus 
\[\mpi^0_G(G/H) \iso \ul{A}_G(G/H) \iso A(H)\]
 is the commutative ring of isomorphism classes of finite $H$-sets. The restriction and induction maps, in the case that $H \leq K$, are given by considering a finite $K$-set as a finite $H$-set, on the one hand, and by inducing an $H$-set up to a $K$-set, on the other. 

The $m$th power operation associated to the $G_\infty$-ring spectrum $\bS$ as in \cref{GLOBAL_OPS_SEC}  takes the form
\[P_m \colon \mf{A}_G \powerarr \MackUp{A},\]
 where we recall that $\MackUp{A}$ is the $G$-Green functor given by $G/H \mapsto A(H\times \Sigma_m)$.
 On the other hand, the $m$th power operation associated to the $H_\infty$-ring $G$-spectrum $\bS^0_G$ takes the form
\[ P_m\colon \ul{A}_G \powerarr \ul{\pi}^0_G(B\Sigma_m)\]
as in \cref{GPOWER_OPS_SEC}. 
Restriction along the map of $G\times\Sigma_m$-spaces $E\Sigma_m\rtarr \ast$, in which $G$ acts trivially on $E\Sigma_m$, induces a map of $G$-Green functors $\MackUp{A} \rtarr \ul{\pi}^0_G(B\Sigma_m)$
 which is a completion map in the sense that, at an orbit $G/H$, 
 \[ \ul{\pi}^0_G(B\Sigma_m)(G/H) \iso \pi^0_H(B\Sigma_m) \iso A(\Sigma_m\times H)^{\wedge}_{I_{\Sigma_m}} \iso \MackUpOrbit{A}{G/H}^{\wedge}_{I_{\Sigma_m}},\]
 where $I_{\Sigma_m}$ is the augmentation ideal of $A(\Sigma_m)$.
This observation appears without proof in \cite[Chapter~XX]{alaska};
we provide a proof for completeness.
Recall that for groups $L$ and $W$, the Burnside module
\[A(L,W)\subset A(L\times W)\]
 is free abelian on $(L\times W)$-sets for which the action of $W$ is free.

\begin{prop} For any groups $H$ and $L$, 
there is an isomorphism
\[ A(L \times H) \iso \bigoplus_{[K \subset H]}  A(L,W_{H}(K))\]
of $A(L)$-modules,
where the sum runs over conjugacy classes of subgroups.
This induces an isomorphism of commutative rings
\[ \pi_H^0(BL) \iso A(L\times H)^{\wedge}_{I_L}, \]
where $I_L$ is the augmentation ideal of $A(L)$.
\end{prop}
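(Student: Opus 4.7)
The plan has two parts: first, establish the $A(L)$-module decomposition via a direct Burnside-theoretic argument stratifying $(L \times H)$-sets by the $H$-conjugacy class of their $H$-isotropy, and second, deduce the ring isomorphism with $\pi^0_H(BL)$ by combining a free-quotient change-of-groups isomorphism with Carlsson's Segal completion theorem in its family form.

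For the first part, I would define
\[
\Phi \colon A(L \times H) \longto \bigoplus_{[K \leq H]} A(L, W_H(K))
\]
by sending a finite $(L \times H)$-set $X$ to the tuple whose $[K]$-component is
\[
X_{=K} := \{x \in X \mid \Stab_H(x) = K\}.
\]
This subset is stable under the action of $L \times N_H(K)$, and $K$ acts trivially on it, so it naturally becomes an $(L \times W_H(K))$-set. The $W_H(K)$-action is free, since if $nK \in W_H(K)$ fixes some $x \in X_{=K}$, then $n \in \Stab_H(x) = K$, forcing $nK = eK$. Thus $[X_{=K}] \in A(L, W_H(K))$ and $\Phi$ is well-defined. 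An inverse $\Psi$ is constructed on basis elements by inflation followed by induction: a free $(L \times W_H(K))$-orbit $Y$ is inflated through $N_H(K) \twoheadrightarrow W_H(K)$ to an $(L \times N_H(K))$-set, then induced to give $\Psi([Y]) = [(L \times H) \times_{L \times N_H(K)} Y]$. To check $\Phi \circ \Psi = \Psi \circ \Phi = \id$, one computes on orbits: for $(L \times H)/M$ with $K := M \cap (\{e\} \times H)$, the $H$-stabilizer of $(l, h)M$ equals $hKh^{-1}$, so $((L \times H)/M)_{=K}$ is a single free $(L \times W_H(K))$-orbit, and induction returns $(L \times H)/M$. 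The map $\Phi$ is $A(L)$-linear since multiplication by $[S] \in A(L)$ corresponds to the product $X \times S$ with $S$ carrying trivial $H$- and $W_H(K)$-actions, and $(X \times S)_{=K} = X_{=K} \times S$.

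For the ring isomorphism $\pi^0_H(BL) \cong A(L \times H)^\wedge_{I_L}$, I would first use that the free $L$-action on $EL$ commutes with the trivial $H$-action, so the standard change-of-groups isomorphism for free quotients gives
\[
\pi^0_H(BL_+) \cong \pi^0_{L \times H}(EL_+).
\]
Next, $EL$ models the universal $(L \times H)$-space $E\mathcal{F}$ for the family
\[
\mathcal{F} = \{\Lambda \leq L \times H \mid \Lambda \cap (L \times \{e\}) = 1\},
\]
and Carlsson's Segal completion theorem applied to this family yields
\[
\pi^0_{L \times H}(E\mathcal{F}_+) \cong A(L \times H)^\wedge_{I_\mathcal{F}}.
\]
The remaining and principal obstacle is to identify the Segal ideal $I_\mathcal{F}$ with the ideal $I_L \cdot A(L \times H)$ obtained by extending the augmentation ideal $I_L \subseteq A(L)$ along the projection $L \times H \to L$. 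For this it suffices to show that these ideals are cofinal in the sense that each contains a power of the other; this comparison can be carried out by working through the decomposition from the first part and checking that each summand $A(L, W_H(K))$ contributes the expected $I_L$-primary pieces, so that the two $I_L$-adic completions coincide.
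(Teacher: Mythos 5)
Your first half is essentially the paper's own argument: you stratify a finite $(L\times H)$-set by the exact $H$-isotropy subgroup, observe that the stratum for $K$ is a free $(L\times W_H(K))$-set, and invert by inflating and inducing from $L\times N_H(K)$; on an orbit $(L\times H)/M$ with $K=M\cap H$ this reproduces the paper's formula $\Phi\big((L\times H)/M\big)=\big(L\times W_H(K)\big)/(M/K)$, and the $A(L)$-linearity check is fine. The second half, however, has a concrete error and a genuine gap. The error: $EL$ with trivial $H$-action is \emph{not} the universal space for the family $\mathcal{F}=\{\Lambda\le L\times H \mid \Lambda\cap(L\times\{e\})=1\}$. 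Since $H$ acts trivially, a point $x\in EL$ is fixed by $\Lambda$ if and only if the projection $p_L(\Lambda)$ fixes $x$, and $EL$ is $L$-free, so $(EL)^{\Lambda}$ is $EL\simeq *$ when $p_L(\Lambda)=\{e\}$ and empty otherwise. Thus $EL$ is the universal space for the family of subgroups of $\{e\}\times H$. Your family $\mathcal{F}$ is much larger: it contains every graph subgroup $\Gamma(\alpha)$ of a nontrivial homomorphism $\alpha\colon K\rtarr L$ with $K\le H$, and such a subgroup acts freely on $EL$ through $\alpha$, so $(EL)^{\Gamma(\alpha)}=\emptyset$ even though $\Gamma(\alpha)\in\mathcal{F}$. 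Applying the family completion theorem with your $\mathcal{F}$ would therefore complete at the wrong ideal.

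Even after correcting the family, the completion theorem of Adams--Haeberly--Jackowski--May delivers completion of $A(L\times H)$ at the $\mathcal{F}$-adic topology, which for the correct family is controlled by $\ker\big(A(L\times H)\rtarr A(H)\big)$, not by $I_L\cdot A(L\times H)$. Identifying these two topologies is precisely the substantive content of the statement: one must show the two ideals have the same radical (equivalently, the same support among the prime ideals $q(\Lambda,p)$ of the Burnside ring) so that, by Noetherianness, each contains a power of the other. Your proposal names this as ``the remaining and principal obstacle'' and then gestures at it without carrying it out, so the proof is incomplete exactly where the work lies. The paper sidesteps this entirely: it feeds the module decomposition into the tom Dieck splitting of $\pi^0_H(BL)$ into the groups $[\Sigma^\infty BL_+,\Sigma^\infty BW_H(K)_+]$ and then quotes the bivariant Segal conjecture of Lewis--May--McClure, which is stated directly as the $I_L$-adic completion of each summand $A(L,W_H(K))$, so no comparison of topologies is needed. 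If you want to pursue your route, you should either import the topology comparison from the Segal-conjecture literature explicitly or switch to the paper's summandwise argument.
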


\begin{proof}
We define an $A(L)$-module map
\[ \Phi \colon A(L \times H) \rtarr \bigoplus_{[K \subset H]}  A(L,W_{H}(K))\]
by 
\[ \Phi\big( (L\times H)/\Gamma\big) = \big(L\times W_H(K_\Gamma)\big)/(\Gamma/K_\Gamma),\]
where $K_\Gamma = \Gamma \cap H$.
Writing $p_H\colon L\times H\rtarr H$ for the projection,
we have $K_\Gamma \trianglelefteq p_H(\Gamma)$, so that $\Gamma/K_\Gamma$ is a subgroup of $L\times W_H(K_\Gamma)$. 
For the reverse direction, we send a $(L\times W_H(K))$-set $Y$ to $Y\times_{W_H(K)} H/K$.
Here, $L$ acts on $Y$, and the quotient (coequalizer) is formed in the category of $H$-sets, where $H$ is acting trivially on $Y$.
This assignment is inverse to $\Phi$, and 
it follows that $\Phi$ is an isomorphism of $A(L)$-modules.
We therefore deduce an isomorphism
\[
 A(L \times H)^{\wedge}_{I_L} \iso \bigoplus_{[K \subset H]}  A(L,W_{H}(K))^{\wedge}_{I_L}
\]
upon completion.

Now consider the ring map $A(L\times H) \rtarr \pi_H^0(BL)$ defined by sending the $(L\times H)$-set $X$ to the composition 
\[ \Sigma^\infty_H(BL)_+\indxarr{\Tr}
\Sigma^\infty_H (X_{hL} )_+\rtarr S^0_H.
\]
This ring map factors 
as in the commutative diagram
\[
            \begin{tikzcd}
                 A(L\times H) \arrow{d}{ \Phi}[swap]{\iso} \ar[r]
                 &
                 \pi_H^0(BL) \arrow{d}{\text{tom Dieck splitting}}[swap]{\iso}
                  \\
                  \displaystyle \bigoplus_{[K \subset H]}  A(L,W_{H}(K)) \ar[r] 
                  & 
                  \displaystyle \bigoplus_{[K \subset H]} [ \Sigma^\infty BL_+, \Sigma^\infty B W_H(K)_+ ].
                               \end{tikzcd}
\]
The lower horizontal arrow is completion at $I_L$ according to the version of the Segal conjecture given in \cite{LMM}.
\end{proof}

The $m$th power operation $P_m \colon \ul{A}_G \powerarr \MackUp{A}$, 
when evaluated at an orbit $G/H$, takes the form
\[ P_m \colon {A}(H) \powerarr A(H
\times \Sigma_m).\]
On an $H$-set $X$, it is given by $X\mapsto X^{\times m}$, where the output is considered as an $(H\times \Sigma_m)$-set. 

In the case $m=2$, we have a complete description of $\ul{J}$ as follows.

\begin{prop}\label{p2Identity} In the case $m=2$, the Mackey ideal $\mf{J} \subset \MackUpPower{A}{2}$ is the kernel of the $\Sigma_2$-fixed point homomorphism $ \MackUpPower{A}{2} \rtarr \mf{A}_G$, and the composition
\[ \mf{A}_G \powerxarr{P_2}\MackUpPower{A}{2} \rtarr \MackUpPower{A}{2}/\mf{J} \iso \mf{A}_G\]
is the identity map.
\end{prop}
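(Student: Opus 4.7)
The plan is to show that the $\Sigma_2$-fixed point construction $Y \mapsto Y^{\Sigma_2}$ defines a surjective map of $G$-Green functors $\Phi^{\Sigma_2}\colon \MackUpPower{A}{2} \rtarr \mf{A}_G$, and then to identify $\mf{J}$ with its kernel. Combined with the observation that $(X\times X)^{\Sigma_2}$ is the diagonal $\Delta(X) \iso X$ with its original $H$-action, this yields both claims simultaneously: the composition $\mf{A}_G \rtarr \MackUpPower{A}{2}/\mf{J} \iso \mf{A}_G$ becomes $\Phi^{\Sigma_2}\circ P_2$, which sends an $H$-set $X$ to $X$.

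First I would verify that $\Phi^{\Sigma_2}$ respects the Mackey structure. Compatibility with restriction is immediate from the definitions, and ring-map compatibility is automatic since fixed points preserve products of $G$-sets. For induction along $K\leq L$, the coset decomposition $L\times \Sigma_2 = \bigsqcup_i (h_i, e)(K\times \Sigma_2)$ identifies $(L\times \Sigma_2)\times_{K\times \Sigma_2} Y$ with $\bigsqcup_i h_i\cdot Y$, on which $\Sigma_2$ acts only through the given action on $Y$, so
\[
\left((L\times \Sigma_2)\times_{K\times \Sigma_2} Y\right)^{\Sigma_2} \iso L\times_K Y^{\Sigma_2},
\]
which is precisely the Mackey induction of $Y^{\Sigma_2}$.

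Next I would identify the kernel. A subgroup $\Gamma\leq L\times \Sigma_2$ either contains $\Sigma_2$, forcing $\Gamma=S\times \Sigma_2$ with $\Sigma_2$ acting trivially on $(L\times \Sigma_2)/\Gamma\iso L/S$, or has $\Gamma\cap \Sigma_2=\{e\}$, making $\Gamma=\Gamma(a)$ a graph subgroup for some $a\colon S\rtarr \Sigma_2$; a direct check shows $\Sigma_2$ acts freely on $(L\times \Sigma_2)/\Gamma$ in the latter case. Hence $\ker \Phi^{\Sigma_2}(G/L)$ is the $\Z$-span of the graph-subgroup orbits $(L\times \Sigma_2)/\Gamma(a)$. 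Applying \cref{JG_ACTION_PRIME_NONBOREL_THM} with $p=2$, the only partition is $1+1$, so partition transfers from $L\times \{e\}$ realize the orbits $(L\times \Sigma_2)/(S\times\{e\})$ corresponding to trivial $a$, while the graph-subgroup transfers with $a\colon S\rtarr \Sigma_2$ surjective realize the orbits $(L\times\Sigma_2)/\Gamma(a)$ via the class of $1\in A(\Gamma(a))$. Together these exhaust the $\Z$-basis of the kernel, and conversely every transfer generator is supported on graph-subgroup orbits and so lies in the kernel, giving $\mf{J}=\ker \Phi^{\Sigma_2}$. The main subtlety will be the Mackey-induction check for $\Phi^{\Sigma_2}$; the remaining identifications are combinatorially straightforward once the dichotomy of subgroups of $L\times \Sigma_2$ by their intersection with $\Sigma_2$ is noted.
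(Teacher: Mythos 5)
Your proposal is correct and follows essentially the same route as the paper: identify the kernel of the $\Sigma_2$-fixed-point map with the span of graph-subgroup orbits, match these against the transfer generators of $\mf{J}$ via \cref{JG_ACTION_PRIME_NONBOREL_THM}, and observe that $(X\times X)^{\Sigma_2}=\Delta(X)\iso X$. You are somewhat more explicit than the paper in verifying that $\Phi^{\Sigma_2}$ is a map of Green functors and in checking the inclusion $\mf{J}\subseteq\ker\Phi^{\Sigma_2}$, both of which the paper leaves implicit.
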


\begin{proof}
It suffices to consider the case $H=G$.
The kernel of the $\Sigma_2$-fixed point homomorphism  has generators $(G\times \Sigma_2)/\Gamma$ where $\Gamma$ is a graph subgroup. If $\Gamma$ is a non-transitive graph subgroup, then it is of the form $K\times \Sigma_1\times \Sigma_1$ and therefore in $\ul{J}(G/G)$. On the other hand, if it is transitive, then it is in $\ul{J}(G/G)$ according to \cref{JG_ACTION_PRIME_NONBOREL_THM}.

Now we consider the composition of the power operation $P_2$ and fixed points.
The $\Sigma_2$-fixed points of $P_2(G/H)=(G/H)^{\times 2}$ are simply the diagonal $G/H\iso \Delta(G/H)\subset (G/H)^{\times 2}$.
This shows that the composition is the identity as claimed.
\end{proof}

When $m=p>2$ is prime, we have the following result.

\begin{prop}\label{SecFiveAbelianGraphs} If $p>2$ is prime and $L\leq G$, then the orbit $(L\times \Sigma_p)/\Gamma$ lies in the ideal  $\ul{J}(G/L)\subset A(L\times \Sigma_p)$ if and only if either $\Gamma$ is  a 
graph subgroup of $L\times \Sigma_p$ such that $\pi_{\Sigma_p}(\Gamma)$ contains some $C_p$, 
or $\Gamma$ is
subconjugate to $L\times \Sigma_i \times \Sigma_j$, for $i$ and $j$ positive and summing to $p$.
\end{prop}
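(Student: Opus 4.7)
The plan is to combine the explicit description of $\ul{J}(G/L)$ provided by \cref{JG_ACTION_PRIME_NONBOREL_THM} with the structure of the Burnside ring as a free abelian group on orbit classes. I will show first that $[(L\times\Sigma_p)/\Gamma]\in\ul{J}(G/L)$ if and only if $\Gamma$ is $(L\times\Sigma_p)$-subconjugate to one of the subgroups appearing in that proposition, and then translate this subconjugacy condition into the two cases stated above.

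For the first step, I will observe that for any subgroup $H\leq L\times\Sigma_p$, Frobenius reciprocity already forces $\mathrm{Tr}^{L\times\Sigma_p}_H(A(H))$ to be an ideal of $A(L\times\Sigma_p)$, not merely an abelian subgroup. Hence the ideal $\ul{J}(G/L)$, generated by the transfer images listed in \cref{JG_ACTION_PRIME_NONBOREL_THM}, is just the sum of these images as abelian subgroups. Each such image is additively generated by orbit classes $[(L\times\Sigma_p)/K]$ for $K\leq H$, since $\mathrm{Tr}[H/K]=[(L\times\Sigma_p)/K]$ and $A(H)$ is free abelian on such classes. Because the orbit classes form a $\mathbb{Z}$-basis of $A(L\times\Sigma_p)$, the orbit $[(L\times\Sigma_p)/\Gamma]$ lies in this sum exactly when $\Gamma$ is $(L\times\Sigma_p)$-subconjugate either to some $L\times\Sigma_i\times\Sigma_j$ with $i+j=p$, $i,j>0$, or to some $\Gamma(a)$ for $a\colon S\to\Sigma_p$ with $S\leq L$ and $\im(a)$ containing a $p$-cycle.

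For the translation step, I will exploit that $\Sigma_p$ is normal in $L\times\Sigma_p$, so the property of being a graph subgroup ($\Lambda\cap\Sigma_p = e$) is preserved under both conjugation and passage to subgroups. Thus any $\Gamma$ subconjugate to some $\Gamma(a)$ is itself a graph subgroup $\Gamma=\Gamma(b)$. By Cauchy's theorem applied in $\Sigma_p$ (recall that $p$ is prime, so transitive subgroups of $\Sigma_p$ are exactly those containing a $p$-cycle), there are two possibilities: either $\im(b)=\pi_{\Sigma_p}(\Gamma)$ contains a $p$-cycle, placing $\Gamma$ in the first case of the proposition; or $\im(b)$ fails to be transitive, hence is $\Sigma_p$-subconjugate to some $\Sigma_i\times\Sigma_j$, and then the inclusion $\Gamma(b)\subseteq \pi_L(\Gamma)\times\pi_{\Sigma_p}(\Gamma)$ shows $\Gamma$ is subconjugate to $L\times\Sigma_i\times\Sigma_j$, placing it in the second case. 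The converses are immediate: a graph subgroup $\Gamma(b)$ with $\im(b)$ containing a $p$-cycle is tautologically of the form $\Gamma(a)$ required by the first generating family, while subconjugacy to $L\times\Sigma_i\times\Sigma_j$ is itself one of the defining conditions.

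The argument is essentially bookkeeping once the concrete description of $\ul{J}(G/L)$ and the Burnside ring basis are in hand; the only conceptual input is Cauchy's theorem for $\Sigma_p$. The main subtlety to track is that ``$\Gamma$ subconjugate to some $\Gamma(a)$ with $\im(a)\ni p$-cycle'' is strictly weaker than ``$\pi_{\Sigma_p}(\Gamma)\ni p$-cycle'' — witnessed, for instance, by restricting a surjection $\Sigma_3 \twoheadrightarrow\Sigma_3$ to a transposition. Whenever the former holds without the latter, however, the corresponding orbit is automatically redundant: it already lies in the transfer image from some $L\times\Sigma_i\times\Sigma_j$, as the argument above shows.
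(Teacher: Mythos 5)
Your proof is correct and follows the same route as the paper, whose proof of this proposition is simply the citation of \cref{JG_ACTION_PRIME_NONBOREL_THM}; you have filled in the Burnside-ring bookkeeping that the paper leaves implicit (Frobenius reciprocity plus freeness on orbit classes reduces membership of $[(L\times\Sigma_p)/\Gamma]$ in $\ul{J}(G/L)$ to subconjugacy to one of the generating subgroups) as well as the translation of that subconjugacy condition into the two stated cases. In particular, your closing observation --- that a graph subgroup subconjugate to some $\Gamma(a)$ but whose own image in $\Sigma_p$ is not transitive already lies in the $\ul{I}_{\Tr}$ part --- is exactly the point needed for the ``only if'' direction, and you handle it correctly.
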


\begin{proof}
This  follows from \cref{JG_ACTION_PRIME_NONBOREL_THM}.
\end{proof}

Next, we will calculate some examples of \cref{p2Identity} and \cref{SecFiveAbelianGraphs}. To calculate the power operations in these examples, we make use of the binomial formula: for $G$-sets $X$ and $Y$, there is an isomorphism of $G\times \Sigma_n$-sets
\[
    (X \amalg Y)^n \iso \coprod_{i+j=n} \mathrm{Tr}_{\Sigma_i \times \Sigma_j}^{\Sigma_n} ( X^i \times Y^j ).
\]

\begin{ex}\label{AC2Ex}
Consider the case $G=C_2$ and $m=2$. 
We will describe the power operation $P_2 \colon \mf{A}_{C_2} \powerarr \MackUpGroupPower{A}{C_2}{2}$, which is only a map of coefficient systems (of sets) over $C_2$.
We write $\Gamma=C_2\times \Sigma_2$ and $D < \Gamma$ for the diagonal subgroup.
Writing $1$ for the one-point orbit (of any group), 
we have
\[ A(C_2) = \Z\{C_2,1\},\]
\[ A(\Sigma_2) = \Z\{\Sigma_2,1\},\]
and
\[ A(C_2\times \Sigma_2) = \Z\{\Gamma,\Gamma/C_2,\Gamma/\Sigma_2,\Gamma/D,1\}.\]

\begin{prop} The power operations
\[ P_2^e \colon A(e) \powerarr A(\Sigma_2) \]
and 
\[ P_2^{C_2} \colon A(C_2) \powerarr A(C_2\times \Sigma_2)\]
are given by
\[ P_2^e(k) = \frac{k^2-k}2 \Sigma_2 + k\]
and
\[ P_2^{C_2}(n C_2 + k) = (n^2-n+kn)\Gamma \, + \, \frac{k^2-k}2 \Gamma/C_2 \, + \, n \Gamma/\Sigma_2 \, + \, n \Gamma/D + k  ,\]
respectively.
\end{prop}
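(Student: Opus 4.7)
The plan is to compute $P_2^H(X) = X^{\times 2}$ directly, unpacking the $H \times \Sigma_2$-structure in which $H$ acts diagonally and $\Sigma_2$ permutes the two factors. Both formulas will follow from a straightforward orbit decomposition.

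For $P_2^e$, take $X$ to be a $k$-element set. The $\Sigma_2$-action on $X \times X$ fixes the diagonal pointwise, contributing $k$ trivial orbits, and acts freely on the $k^2 - k$ off-diagonal pairs, contributing $\tfrac{k^2-k}{2}$ free orbits. This is exactly the stated formula.

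For $P_2^{C_2}$, the approach is to write $X = A \sqcup B$, where $A$ is the union of the $n$ free $C_2$-orbits of $X$ and $B$ is the union of the $k$ fixed points. This produces a $\Gamma$-equivariant decomposition $X^{\times 2} \cong A^{\times 2} \sqcup (A\times B) \sqcup (B\times A) \sqcup B^{\times 2}$, and I would analyze each piece in turn. The piece $B^{\times 2}$ has trivial $C_2$-action, so the diagonal contributes $k$ copies of the trivial orbit and the complement contributes $\tfrac{k^2-k}{2}$ orbits of type $\Gamma/C_2$. The pieces $A\times B$ and $B\times A$ are interchanged by $\Sigma_2$, so for each free $C_2$-orbit $\{a, \sigma a\} \subset A$ and each $b \in B$ the four-element set $\{(a,b), (\sigma a, b), (b,a), (b, \sigma a)\}$ forms a single free $\Gamma$-orbit, giving $nk$ free orbits in total. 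Finally, $A^{\times 2}$ splits further according to which pair of $C_2$-orbits in $A$ the two coordinates occupy: within a single orbit $\{a, \sigma a\}$, the subset $\{(a,a), (\sigma a, \sigma a)\}$ has stabilizer $\Sigma_2$ (orbit type $\Gamma/\Sigma_2$) and the subset $\{(a, \sigma a), (\sigma a, a)\}$ has stabilizer the diagonal $D$ (orbit type $\Gamma/D$), yielding $n$ orbits of each type; between two distinct $C_2$-orbits the corresponding $8$ elements form two free $\Gamma$-orbits, giving $2\binom{n}{2} = n^2 - n$ free orbits from this source. Combining the free orbits from $A^{\times 2}$ with those from $A\times B \sqcup B\times A$ yields $n^2 - n + nk$ copies of $\Gamma$, completing the tally.

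The one subtle step will be spotting the $\Gamma/D$ orbit inside $\{a, \sigma a\}^{\times 2}$: the element $(a, \sigma a)$ is fixed neither by the non-trivial element of $C_2$ alone nor by $\Sigma_2$ alone, but it is fixed by their combined action $(\sigma, \tau) \in D$, which applies $\sigma$ coordinatewise and then swaps the coordinates. Two sanity checks will be available: the total cardinalities should sum to $(2n+k)^2 = 4n^2 + 4nk + k^2$, and applying the $\Sigma_2$-fixed-point functor should return $nC_2 + k$, recovering \cref{p2Identity}.
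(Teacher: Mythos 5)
Your proposal is correct and matches the paper's argument: the paper proves $P_2^e$ by the same diagonal/off-diagonal observation and proves $P_2^{C_2}$ via exactly this block decomposition of $(nC_2+k)\times(nC_2+k)$, displayed as a figure. The "subtle step" you flag — that a free $C_2$-orbit squared decomposes as $\Gamma/\Sigma_2 \amalg \Gamma/D$ — is precisely what the paper singles out as the key point.
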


\begin{proof}
For $P_2^e$, this is simply a matter of observing that the diagonal of $k\times k$ is fixed by the $\Sigma_2$-action, and the rest is free. The case of $P_2^{C_2}$ is displayed in \cref{C2C2Figure}.
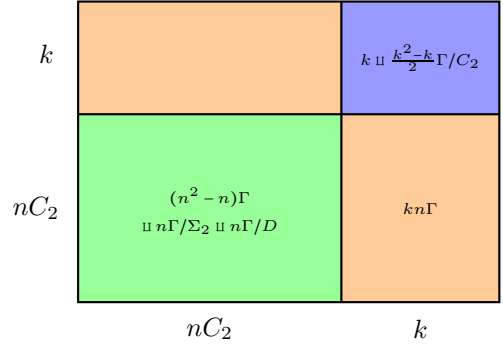
\begin{figure}
\caption{The $(C_2\times \Sigma_2)$-set $(nC_2+k)\times (nC_2+k)$\label{C2C2Figure}}
\begin{tikzpicture}[xscale=1.4]
\filldraw [fill=green!40] (0,0) rectangle (2.5,2.5);
\filldraw [fill=orange!40 ] (2.5,0) rectangle (4,2.5);
\filldraw [fill=orange!40 ] (0,2.5) rectangle (2.5,4);
\filldraw [fill=blue!40 ] (2.5,2.5) rectangle (4,4);
\draw [thick] (2.5,0) -- (2.5,4);
\draw [thick] (0,2.5) -- (4,2.5);
\draw [black,thick] (0,0) rectangle (4,4);
\node at (1.25,-0.35) {$n C_2$};
\node at (3.25,-0.35) {$k$};
\node at (-0.4,1.25) {$n C_2$};
\node at (-0.3,3.3) {$k$};
\node [font=\tiny] at (3.25,3.25) {$k \amalg \frac{k^2-k}2 \Gamma/C_2$};
\node [font=\tiny] at (3.25,1.25) {$k n \Gamma$};
\node [font=\tiny] at (1.25,1.4) {$ (n^2-n) \Gamma $};
\node [font=\tiny] at (1.25,1) {$\amalg \, n \Gamma/\Sigma_2 \amalg n \Gamma/D$};
\end{tikzpicture}
\end{figure}
The key here is that $C_2\times C_2 \iso \Gamma/\Sigma_2 \amalg \Gamma/D$.
\end{proof}

The images of the transfer maps $A(e) \indarr A(\Sigma_2)$ and $A(C_2) \indarr A(C_2\times \Sigma_2)$ are $\Z\{\Sigma_2\}\subset A(\Sigma_2)$ and $\Z\{ \Gamma, \Gamma/C_2\}\subset A(C_2\times \Sigma_2)$. Thus, after modding out by the images of these transfer maps, we have
\begin{center}
\begin{tikzpicture}
\node (C2L) at (0,2) {$\Z\{C_2,1\}$};
\node (eL) at (0,0) {$\Z$};

\draw[bend right=20,->] (C2L) to node[pos=0.4,left] {\mylittlematrix{2 & 1}} (eL);
\draw[bend right=20,color=\inductioncolor,->] (eL) to node[pos=0.6,right] {\mylittlematrix{1 \\ 0}} (C2L);

\node
(C2) at (6,2) {$\Z\{\Gamma/\Sigma_2,\Gamma/D,1\}$};
\node
(e) at (6,0) {$\Z$};

\draw[bend right=20,->] (C2) to node[left] {\mylittlematrix{2 & 0 & 1}} (e);
\draw[bend right=20,color=\inductioncolor,->] (e) to node[right] {\mylittlematrix{1 \\ 0 \\ 0 }} (C2);

\draw[->,color=\powercolor] (eL) to node[above,font=\tiny] {1} (e);
\draw[->,color=\powercolor] (C2L) to node[above] {\mylittlematrix{1 & 0 \\ 1 & 0 \\ 0 & 1 }}  (C2);
\end{tikzpicture}
\end{center}
The power operation does not commute with the Mackey induction homomorphisms. But after collapsing out the image of the additional transfer $A(D) \indarr A(C_2\times \Sigma_2)$, the power operation becomes the identity map of the Burnside Green functor, as it must be according to \cref{p2Identity}.

\end{ex}

\begin{ex} Consider $G=\Sigma_3$ and $m=2$. 
We will write $\rho$ for a 3-cycle and $\tau$  for a transposition in $\Sigma_3$. We will let $\sigma\in \Sigma_2$ be the transposition. 
We begin by restricting attention to $C_3 < \Sigma_3$. 
There is a decomposition of $(C_3\times \Sigma_2)$-sets
\[C_3^{\times 2} \iso (C_3\times \Sigma_2)/\Sigma_2  \amalg (C_3\times \Sigma_2).\]
This decomposition together with the binomial formula 
allows us to understand $(nC_3+k)^2$ as a $C_3 \times \Sigma_2$-set. This gives the following result.

\begin{prop} Writing $\Gamma=C_3\times \Sigma_2$, 
the second power operation \linebreak \mbox{$P_2^{C_3} \colon A(C_3)\rtarr A(C_3 \times \Sigma_2)$} is given by 
\[ P_2^{C_3}(nC_3+k) = \left(nk+\frac{3n^2-n}2\right)\Gamma + \frac{k^2-k}2\Gamma/C_3 + n\Gamma/\Sigma_2 + k.\]\end{prop}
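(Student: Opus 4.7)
The plan is to compute $P_2^{C_3}(nC_3 + k) = (nC_3 + k)^{\times 2}$ directly as a $\Gamma$-set (with $\Gamma = C_3 \times \Sigma_2$, $C_3$ acting diagonally and $\Sigma_2$ swapping the two factors) and read off the orbit decomposition. The $C_3$-set splitting of $nC_3+k$ yields a decomposition
\[ (nC_3 + k)^{\times 2} \ \iso \ (nC_3)^{\times 2} \ \amalg \ (nC_3 \times k) \ \amalg \ (k \times nC_3) \ \amalg \ k^{\times 2}, \]
in which $\Sigma_2$ interchanges the two middle summands and preserves the outer two. Since the proposition is only about the single level $G/C_3$, no Mackey-functorial compatibilities need to be verified; the content is purely the $\Gamma$-set enumeration.

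First I would dispatch the easy pieces. On $(nC_3 \times k) \amalg (k \times nC_3)$ the $C_3$-action is free on the $nC_3$-factor and $\Sigma_2$ has no fixed points (it moves each element to the other summand), so the $\Gamma$-action is free; by cardinality count $2 \cdot 3 n k / |\Gamma| = nk$, this contributes $nk \cdot \Gamma$. On $k^{\times 2}$ the $C_3$-action is trivial and $\Sigma_2$ swaps coordinates, fixing the $k$ diagonal elements (contributing the constant $k$) and producing $\binom{k}{2}$ free $\Sigma_2$-orbits on the off-diagonal, each of $\Gamma$-orbit type $\Gamma/C_3$; this contributes $k + \frac{k^2-k}{2}\, \Gamma/C_3$.

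The main piece is $(nC_3)^{\times 2} = \coprod_{i,j = 1}^{n} C_3^{(i)} \times C_3^{(j)}$. When $i \neq j$, the summands $C_3^{(i)} \times C_3^{(j)}$ and $C_3^{(j)} \times C_3^{(i)}$ are swapped by $\Sigma_2$ and $C_3$ acts freely diagonally, so their union is free of cardinality $18 = 3|\Gamma|$ and therefore decomposes as $3\Gamma$. Summing over the $\binom{n}{2}$ unordered pairs yields $\frac{3n(n-1)}{2}\,\Gamma$. When $i = j$, I apply the decomposition
\[ C_3 \times C_3 \ \iso \ \Gamma/\Sigma_2 \ \amalg \ \Gamma \]
recalled immediately before the proposition statement: the three diagonal elements form a single $\Sigma_2$-fixed free $C_3$-orbit ($\Gamma/\Sigma_2$), while the six off-diagonal elements form a single free $\Gamma$-orbit (the two off-diagonal $C_3$-orbits $C_3 \cdot (e,\rho)$ and $C_3 \cdot (e,\rho^2)$ are interchanged by $\Sigma_2$, not each self-stabilized). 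Summing over $i = 1, \dots, n$ contributes $n\,\Gamma/\Sigma_2 + n\,\Gamma$.

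Adding the four contributions gives
\[ \left(nk + n + \tfrac{3n(n-1)}{2}\right)\Gamma \; + \; \tfrac{k^2-k}{2}\,\Gamma/C_3 \; + \; n\,\Gamma/\Sigma_2 \; + \; k, \]
and simplifying the $\Gamma$-coefficient to $nk + \frac{3n^2-n}{2}$ yields the claimed formula. The argument is pure bookkeeping; the only point requiring some attention is the $i = j$ off-diagonal subcase, where the two $C_3$-orbits on $C_3\times C_3 \setminus \Delta$ fuse into one free $\Gamma$-orbit rather than remaining two $\Gamma$-orbits of type $\Gamma/\Sigma_2$. This is precisely the content of the decomposition highlighted in the paper just before the statement.
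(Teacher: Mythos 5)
Your computation is correct and follows essentially the same route as the paper, which justifies the proposition by the single decomposition $C_3^{\times 2}\iso \Gamma/\Sigma_2 \amalg \Gamma$ and leaves the remaining block-by-block bookkeeping (as in the analogous $C_2$ figure) implicit. You have simply written out that bookkeeping in full, and the key subtle point — that the off-diagonal of $C_3\times C_3$ is a single free $\Gamma$-orbit rather than two copies of $\Gamma/\Sigma_2$ — is handled correctly.
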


On the other hand, now writing $\Gamma = \Sigma_3 \times \Sigma_2$, we have
\[\Sigma_3^{\times 2} \iso \Gamma /\Sigma_2\, \amalg \Gamma \amalg 3 (\Gamma/D),
\]
where $D < \Gamma$ is  the order two subgroup generated by the element $(\tau,\sigma)$.
We have
\[ A(\Sigma_3) = \Z\{\Sigma_3, \Sigma_3/C_3, \Sigma_3/C_2, 1\}\]
and
\[ A(\Sigma_3 \times \Sigma_2) = \Z\{ \Gamma, \Gamma/C_3, \Gamma/C_2,  \Gamma/\Sigma_3, \Gamma/\Sigma_2, \Gamma/(C_3\times \Sigma_2),  \Gamma/(C_2\times \Sigma_2), \Gamma/D,\Gamma/DC_3,1
\}.\]

\begin{prop} Writing $\Gamma=\Sigma_3\times \Sigma_2$, 
the second power operation \linebreak \mbox{$P_2^{\Sigma_3} \colon A(\Sigma_3)\rtarr A(\Sigma_3 \times \Sigma_2)$} is given by 
\begin{equation*}
\begin{split}
 & P_2^{\Sigma_3}(n \Sigma_3 \amalg i \Sigma_3/C_3 \amalg j \Sigma_3/C_2 \amalg k) = 
 \left(3n^2 - 2n + 2ni + 3nj + nk + ij + \frac{j^2 - j }2\right) \Gamma \\
 & \qquad\qquad  \amalg (i^2 -i + ik) \Gamma/C_3 \amalg \left( \frac{j^2-j}2 + jk\right) \Gamma/C_2 \amalg
 \frac{k^2-k}2 \Gamma/\Sigma_3 \amalg n \Gamma/\Sigma_2 \\
 & \qquad \qquad \amalg i \Gamma/(C_3\times \Sigma_2) \amalg j \Gamma/(C_2\times \Sigma_2) \amalg (3n+j) \Gamma/D \amalg
 i \Gamma/DC_3 \amalg k.
 \end{split}
\end{equation*}
\end{prop}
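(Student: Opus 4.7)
The strategy is to exploit the multiplicativity of $P_2$ together with the $\Sigma_2$-swap action on cross-terms arising from a disjoint union decomposition. For any $\Sigma_3$-set $X = \coprod_a c_a X_a$ (a disjoint union of $c_a$ copies of each orbit type $X_a$), the square $X \times X$ decomposes as a $\Gamma = \Sigma_3 \times \Sigma_2$-set into three kinds of pieces: (i) the ``diagonal'' pieces $X_a \times X_a$ (same copy squared), on which $\Sigma_2$ acts by swap; (ii) pairs $(X_a^{(p)} \times X_a^{(q)}) \amalg (X_a^{(q)} \times X_a^{(p)})$ for $p \neq q$ distinct copies of the same orbit type, which $\Sigma_2$ freely pairs into the induced $\Gamma$-set $\Tr_{\Sigma_3}^{\Gamma}(X_a \times X_a)$; and (iii) cross pairs for different orbit types $a \neq b$, similarly paired by $\Sigma_2$ into $\Tr_{\Sigma_3}^{\Gamma}(X_a \times X_b)$. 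Counting multiplicities yields the master formula
\[
P_2\left(\sum_a c_a X_a\right) \; = \; \sum_a c_a \, P_2(X_a) \; + \; \sum_a \binom{c_a}{2} \Tr_{\Sigma_3}^{\Gamma}(X_a \times X_a) \; + \; \sum_{a<b} c_a c_b \, \Tr_{\Sigma_3}^{\Gamma}(X_a \times X_b),
\]
where each $X_a \times X_b$ is regarded as a $\Sigma_3$-set with diagonal action and $\Tr_{\Sigma_3}^{\Gamma}$ sends $\Sigma_3/H$ to $\Gamma/H$ (identifying $H \leq \Sigma_3$ with $H \times \{e\} \leq \Gamma$).

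With $X_1 = \Sigma_3$, $X_2 = \Sigma_3/C_3$, $X_3 = \Sigma_3/C_2$, $X_4 = \ast$ and coefficients $n, i, j, k$, the proof reduces to two routine calculations: first, the four values $P_2(X_a) \in A(\Gamma)$; second, the ten products $X_a \times X_b \in A(\Sigma_3)$, obtained from the double-coset formula exactly as in the prior computation of $P_2^{C_3}$. The four values evaluate to $P_2(\Sigma_3) = \Gamma + \Gamma/\Sigma_2 + 3\,\Gamma/D$, $P_2(\Sigma_3/C_3) = \Gamma/(C_3 \times \Sigma_2) + \Gamma/DC_3$, $P_2(\Sigma_3/C_2) = \Gamma/(C_2 \times \Sigma_2) + \Gamma/D$, and $P_2(\ast) = 1$; substituting these values together with the products (for example $\Sigma_3 \times \Sigma_3 = 6\,\Sigma_3$, $\Sigma_3/C_2 \times \Sigma_3/C_2 = \Sigma_3 + \Sigma_3/C_2$, $\Sigma_3/C_3 \times \Sigma_3/C_3 = 2\,\Sigma_3/C_3$, and the remaining free products) into the master formula and collecting coefficients by $\Gamma$-orbit type recovers the stated expression.

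The only genuinely nontrivial bookkeeping lies in computing $P_2(\Sigma_3)$. Under the diagonal $\Sigma_3$-action, $\Sigma_3 \times \Sigma_3$ splits into six free $\Sigma_3$-orbits, one for each $g \in \Sigma_3$ with representative $(e, g)$. The swap $\sigma$ sends the orbit of $(e,g)$ to that of $(e, g^{-1})$, so the swap-fixed orbits correspond precisely to the involutions $g \in \{e, \tau_1, \tau_2, \tau_3\}$, while the pair $\{(e,\rho), (e,\rho^2)\}$ is freely swapped. For $g = e$ the $\Gamma$-stabilizer of $(e,e)$ is $\Sigma_2$, contributing one copy of $\Gamma/\Sigma_2$; for each transposition $\tau_i$ the $\Gamma$-stabilizer of $(e,\tau_i)$ is the diagonal subgroup $\langle (\tau_i, \sigma) \rangle$, and the three such subgroups are $\Gamma$-conjugate to $D$, together contributing $3\,\Gamma/D$; and the swap-free pair merges into a single $\Gamma$-orbit isomorphic to $\Gamma$ itself. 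The analogous (and simpler) analyses for $\Sigma_3/C_3$ and $\Sigma_3/C_2$ identify the remaining orbit types, including the ``twisted $\Sigma_3$'' subgroup $DC_3$, which appears as the $\Gamma$-stabilizer of the antidiagonal orbit $\{(C_3, \tau C_3), (\tau C_3, C_3)\}$ in $(\Sigma_3/C_3)^{\times 2}$.
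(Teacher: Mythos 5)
Your proposal is correct, and it follows the same method the paper uses (implicitly here, and explicitly via the block decomposition of $(nC_2+k)^{\times 2}$ in the earlier $C_2$ example): decompose $X\times X$ into diagonal blocks, which contribute $P_2$ of each orbit type as computed from the stated splitting $\Sigma_3^{\times 2}\iso \Gamma/\Sigma_2 \amalg \Gamma \amalg 3(\Gamma/D)$ and its analogues, and off-diagonal blocks, which are freely swapped and hence contribute induced terms $\Gamma/H$ weighted by $\binom{c_a}{2}$ and $c_ac_b$. I checked your master formula, the four values $P_2(X_a)$, the ten products in $A(\Sigma_3)$, and the resulting coefficients; all agree with the stated expression.
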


In addition to the transfers already discussed in \cref{AC2Ex}, the images of the transfers $A(C_3) \indarr A(C_3\times \Sigma_2)$ and $A(\Sigma_3) \indarr A(\Sigma_3\times \Sigma_2)$ are
\[ \Z\{ C_3\times \Sigma_2, (C_3\times \Sigma_2)/C_3\} \subset A(C_3\times \Sigma_2) \]
and 
\[
\Z\{ \Gamma, \Gamma/C_3, \Gamma/C_2, \Gamma/\Sigma_3\} \subset A(\Sigma_3 \times \Sigma_2).
\]
Thus, after modding out by the images of these transfer maps, we have
\begin{center}
{ 
\begin{tikzpicture}
\node (GL) at (0,6) {\scriptsize$\Z\{\Sigma_3,\Sigma_3/C_3,\Sigma_3/C_2,1\}$};
\node (C3L) at (-2,3) {\scriptsize$\Z\{C_3,1\}$};
\node (C2L) at (2,2) {\scriptsize$\Z\{C_2,1\}$};
\node (eL) at (0,0) {\scriptsize$\Z$};

\draw[bend right=20,->] (C2L) to node[pos=0.4,left] {\mylittlematrix{2 & 1}} (eL);
\draw[bend right=20,color=\inductioncolor,->] (eL) to node[pos=0.6,right] {\mylittlematrix{1\\ 0}} (C2L);
\draw[bend right=20,->] (C3L) to node[left] {\mylittlematrix{3 & 1  }} (eL);
\draw[bend right=20,color=\inductioncolor,->] (eL) to node[right]{\mylittlematrix{1 \\ 0 }} (C3L);
\draw[bend right=15,->] (GL) to node[left,pos=0.8]{\mylittlematrix{3 & 1 & 1 & 0 \\ 0 & 0 & 1 & 1  }} (C2L);
\draw[bend right=15,color=\inductioncolor,->] (C2L) to node[right,pos=0.6,xshift={1pt}]{\mylittlematrix{1 & 0 \\ 0 & 0 \\ 0 & 1 \\ 0 & 0  }} (GL);
\draw[bend right=15,->] (GL) to node[left,pos=0.2,xshift={-1ex}]{\mylittlematrix{2 & 0 & 1 & 0  \\ 0 & 2 & 0 & 1 }} (C3L);
\draw[bend right=15,color=\inductioncolor,->] (C3L) to node[right,pos=0.3,xshift={1ex}] {\mylittlematrix{1 & 0 \\ 0 & 1 \\ 0 & 0 \\ 0 & 0 }}(GL);

\node[ right = 50mm of GL] (G) at (-1,6) {\scriptsize$\Z\{\Gamma/\Sigma_2, \Gamma/(C_3\times \Sigma_2), \Gamma/(C_2\times \Sigma_2), \Gamma/D, \Gamma/DC_3, 1\}$};
\node[ right = 50mm of C3L] (C3) at (-2,3) {\scriptsize$\Z\{(C_3\times  \Sigma_2)/\Sigma_2, 1  \}$};
\node[ right = 50mm of C2L] (C2) at (2,2) {\scriptsize$\Z\{(C_2\times  \Sigma_2)/\Sigma_2,(C_2 \times \Sigma_2)/D, 1 \}$};
\node[ right = 50mm of eL] (e) at (0,0) {\scriptsize$\Z$};

\draw[bend right=10,->] (C2) to node[pos=0.2,left,xshift={-1ex}] {\mylittlematrix{2 & 2 & 1 }} (e);
\draw[bend right=10,color=\inductioncolor,->] (e) to node[pos=0.6,right,xshift=2ex] {\mylittlematrix{1 \\ 0 \\ 0  }} (C2);
\draw[bend right=15,->] (C3) to node[left] {\mylittlematrix{3 & 1 }} (e);
\draw[bend right=15,color=\inductioncolor,->] (e) to node[right]{\mylittlematrix{1 \\ 0 }} (C3);
\draw[bend right=10,->] (G) to node[left,pos=0.7]{\mylittlematrix{ 
3 & 1 & 0 & 0 & 0 & 0 \\
0 & 0 & 0 & 3 & 1 & 0\\ 
0 & 0 & 3 & 0 & 0 & 1
 }} (C2);
\draw[bend right=10,color=\inductioncolor,->] (C2) to node[right,xshift=1ex]{\mylittlematrix{
1 & 0 & 0 \\
0 & 0  & 0 \\
0 & 0 & 1\\
0 & 1& 0 \\
0 & 0 & 0 \\
0 & 0 & 0 \\  }} (G);
\draw[bend right=10,->] (G) to node[left,pos=0.3,xshift={-1ex}]{\mylittlematrix{
2 & 0 & 1 & 0 & 0 & 0\\
 0 & 2  & 0 & 0 & 0 & 1
}} (C3);
\draw[bend right=10,color=\inductioncolor,->] (C3) to node[right,pos=0.25,xshift={2.5ex}] {\mylittlematrix{
1 & 0\\
0 & 1\\
0 & 0\\
0 & 0\\
0 & 0\\
0 & 0\\
}}(G);

\draw[->,color=\powercolor] (eL) to node[above,font=\tiny] {1} (e);
\draw[->,bend left=8,color=\powercolor] (C2L) to node[below,pos=0.2] {\mylittlematrix{1 & 0 \\ 1 & 0 \\ 0 & 1 }}  (C2);
\draw[->,bend left=8,color=\powercolor] (C3L) to node[above,pos=0.8] {\mylittlematrix{1 & 0 \\ 0 & 1  }} (C3);
\draw[->,color=\powercolor] (GL) to node[below] {\mylittlematrix{
1 & 0 & 0 & 0 \\
0 & 1 & 0 & 0 \\
0 & 0 & 1 & 0 \\
3 & 0 & 1 & 0 \\
0 & 1 & 0 & 0 \\
0 & 0 & 0 & 1
 }} (G);
\end{tikzpicture}
}
\end{center}
In order to make the power operations commute with Mackey induction, we must further collapse $\Z\{(C_2\times \Sigma_2)/D\}\subset A(C_2\times \Sigma_2)$ and $\Z\{\Gamma/D,\Gamma/DC_3\}\subset A(\Sigma_3\times \Sigma_2)$. The resulting power operation of Green functors is the identity on $\ul{A}_{\Sigma_3}$, 
as it must be according to \cref{p2Identity}.
\end{ex}

\begin{ex}
Consider $G=C_3$ and $m=3$. Then 
\[ A(\Sigma_3) = \Z\{\Sigma_3, \Sigma_3/C_3, \Sigma_3/C_2, 1\}. \]
For $\Gamma=C_3\times \Sigma_3$, we write $C_3^R$ for the order 3 subgroup of $\Sigma_3$, and we write 
$C_2$ for a choice of order two subgroup of $\Sigma_3$ and
$\Delta$ for  the order 3 subgroup generated by $(\rho,\sigma_3)$, where $\rho$ generates $C_3$ and $\sigma_3$ is a 3-cycle. Then
\[ A(C_3\times \Sigma_3) \iso 
\Z\{
\Gamma, \Gamma/C_3, \Gamma/C_2, \Gamma/\Sigma_3, \Gamma/C_3^R, \Gamma/(C_3\times C_2), \Gamma/C_3^{\times 2}, \Gamma/\Delta, \Gamma/\Delta C_2, 1
\},
\]
where $\Delta C_2$ is the internal product in $C_3\times \Sigma_3$.

\begin{prop}\label{AC3ThirdPower} 
Writing $\Gamma=C_3\times \Sigma_3$, the power operations
\[ P_3^e \colon  A(e) \powerarr A(\Sigma_3) \]
and 
\[ P_3^{C_3} \colon  A(C_3) \powerarr A(C_3\times \Sigma_3)\]
are given by
\[ P_3^e(k) =
\binom{k}3 \Sigma_3 + k(k-1) \Sigma_3/C_2 + k \Sigma_3/\Sigma_3
\]
and
\begin{equation*}
\begin{split}
 P_3^{C_3}(n C_3 + k) &= 
\Big[ n \binom{k}2 + k \frac{3n^2-n}2  + 6\binom{n}2 + 9\binom{n}3
\Big]
\Gamma \, + \,
\binom{k}3 \Gamma/C_3 \,
 + \,
n \Gamma/\Sigma_3
\\
& \quad 
 + \,
k(k-1) \Gamma/(C_3\times C_2) \,
+ \, \Big[ 2nk +2n+6\binom{n}2\Big] \Gamma/C_2 \,
+ \, n \Gamma/\Delta \,
+ \, k \Gamma/\Gamma \,
\end{split}
\end{equation*}
respectively.
\end{prop}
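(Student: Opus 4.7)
Recall from the beginning of \cref{sec:sphere} that the power operation $P_3^H$ sends an $H$-set $X$ to the $(H \times \Sigma_3)$-set $X^{\times 3}$ with diagonal $H$-action and permutation $\Sigma_3$-action. For $P_3^e$, the plan is direct: a tuple $(x,y,z) \in X^{\times 3}$ (with $|X|=k$) belongs to one of three $\Sigma_3$-orbit types according to its equality pattern (all equal, exactly two equal, all distinct). Counting representatives yields $k$, $k(k-1)$, and $\binom{k}{3}$ orbits, with stabilizers $\Sigma_3$, $C_2$, and trivial, respectively, giving the stated formula for $P_3^e$.

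For $P_3^{C_3}$, the plan is to write $X = F \sqcup T$ with $F = nC_3$ free and $T$ trivial of size $k$, then expand
\[
X^{\times 3} \ = \ \bigsqcup_{(i,j,l) \in \{F, T\}^3} X_i \times X_j \times X_l.
\]
I will group the eight summands into the four $\Sigma_3$-orbits of types, namely $\{TTT\}$, $\{FTT, TFT, TTF\}$, $\{FFT, FTF, TFF\}$, and $\{FFF\}$, and analyze each $\Gamma$-invariant piece separately. The piece $TTT = T^{\times 3}$ inherits a trivial $C_3$-action, so its $\Gamma$-orbit decomposition follows immediately from the $P_3^e$ computation (each $\Sigma_3/H$ orbit enlarging to $\Gamma/(C_3 \times H)$). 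The middle two pieces are each induced from a subgroup $C_3 \times C_2 \leq \Gamma$, where $C_2 \leq \Sigma_3$ is the $\Sigma_3$-stabilizer of the label type; this reduces the work to decomposing $T \times T$ and $F \times F$ as $(C_3 \times C_2)$-sets. The latter calculation uses the standard decomposition $C_3 \times C_3 \cong (C_3 \times C_2)/C_2 \sqcup (C_3 \times C_2)$ on diagonal labels $(a,a)$ together with an induction argument for off-diagonal labels.

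The main obstacle is the $FFF$ piece, which requires a two-stage analysis. First, I will decompose $F^{\times 3}$ by the $\Sigma_3$-orbit of the triple of labels $(a,b,c) \in \underline{n}^3$, giving $n$, $n(n-1)$, and $\binom{n}{3}$ orbits of all-equal, two-equal, and all-distinct type, with $\Sigma_3$-stabilizers $\Sigma_3$, $C_2$, and trivial. For each label class, the corresponding summand is induced from the subgroup $C_3 \times (\Sigma_3\text{-stabilizer}) \leq \Gamma$ acting on the ``fiber'' $C_3^{\times 3}$ (diagonal $C_3$, partial $\Sigma_3$). The hardest subcase is the all-equal labels, where the full $\Gamma$-orbit decomposition of $C_3^{\times 3}$ must be computed; I expect four orbits: $\Gamma/\Sigma_3$ from the triple diagonal $(g,g,g)$, two copies of $\Gamma/C_2$ from $(e,e,g)$ and $(e,e,g^2)$ (each of size $9$), and $\Gamma/\Delta$ from $(e,g,g^2)$ (of size $6$). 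The appearance of the twisted diagonal $\Delta = \langle (\rho,\sigma_3)\rangle$ is the genuinely new feature compared to the $P_3^e$ case. Summing contributions from all four pieces and simplifying using $6\binom{n}{2} = 3n(n-1)$ and $9\binom{n}{3} = \frac{3n(n-1)(n-2)}{2}$ then yields the claimed formula.
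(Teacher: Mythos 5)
Your proposal is correct and follows the same route the paper takes (implicitly here, and explicitly for the analogous $m=2$ computations): a direct decomposition of $X^{\times 3}$ into $C_3\times\Sigma_3$-orbits, organized by the $\Sigma_3$-orbit type of the $F/T$-labels and then of the $\underline{n}$-labels within $F^{\times 3}$. All the key counts in your outline check out — in particular the decomposition $C_3^{\times 3}\cong \Gamma/\Sigma_3 + 2\,\Gamma/C_2 + \Gamma/\Delta$ for an all-equal label block, and the resulting totals $2nk+2n+6\binom{n}{2}$ for $\Gamma/C_2$ and $n\binom{k}{2}+k\frac{3n^2-n}{2}+6\binom{n}{2}+9\binom{n}{3}$ for $\Gamma$ — so the argument reproduces the stated formula.
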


By \cref{SecFiveAbelianGraphs}, $\ul{J}(C_3/C_3)\subset A(C_3\times \Sigma_3)$ is generated by the orbits
$\Gamma$, $\Gamma/C_3$, $\Gamma/(C_3\times C_2)$, $\Gamma/C_2$, and $\Gamma/\Delta$, which are precisely the terms appearing in \cref{AC3ThirdPower} with a nonlinear coefficient. 
The resulting  power operation of Green functors
is then an inclusion
\[ \ul{A}_{C_3} \powerxarr{P_3/\ul{J}} \MackUpGroupPower{A}{C_3}{3}/\ul{J} \iso \ul{A}_{C_3} \oplus \ul{A}_{C_3} \oplus \hspace{-1ex}
\raisebox{-4ex}{\begin{tikzpicture}
\node[scale=.8] (a) at (0,0){
\begin{tikzcd}
\Z\{\Gamma/\Delta\} \restrict{d} \\
0 \induct{u} 
\end{tikzcd}};
\end{tikzpicture}}
\]

Here the first copy of $\ul{A}_{C_3}$ contains the orbits $\Gamma/\Sigma_3$ and $1=\Gamma/\Gamma$, whereas the second copy contains the orbits $\Gamma/C_3^R$ and $\Gamma/(C_3)^2$.

\end{ex}

\subsection{Global $KU$}
\label{sec:KUG}

Consider the $G_\infty$-ring spectrum $KU$ (\cite[6.4.9]{global}). The associated $G$-Green functor $\ul{KU}_G^0$ is the representation ring Green functor, with $\ul{KU}_G^0(G/H) \iso RU(H)$.
The restriction and induction maps correspond to restriction and induction of representations, respectively. 
The $m$th power operation associated to the $G_\infty$-ring spectrum $KU$ as in \cref{GLOBAL_OPS_SEC}  takes the form
\[ P_m \colon \ul{RU}_G \powerarr \MackUp{RU}\]
and is given at $G/H$ by  $V\mapsto V^{\otimes m}$, where the latter is consider as a $(H\times\Sigma_m)$-representation.
On the other hand, the $m$th power operation associated to the $H_\infty$-ring $G$-spectrum $KU_G$ takes the form
\[ P_m \colon \ul{RU}_G = \ul{KU}_G^0 \powerarr \ul{KU}_G^0(B\Sigma_m).\]
as in \cref{GPOWER_OPS_SEC}. 
The map of $G\times\Sigma_m$-spaces $E\Sigma_m\rtarr \ast$ induces a map of $G$-Green functors $\MackUp{RU} \rtarr \ul{KU}^0_G(B\Sigma_m)$. At an orbit $G/H$, this is the map
\[ \MackUpOrbit{RU}{G/H} \iso RU(H\times \Sigma_m) \xrtarr{(-)_{h\Sigma_m}} KU_H^0(B\Sigma_m) \iso \ul{KU}^0_G(B\Sigma_m)(G/H)\]
which takes an $H\times\Sigma_m$-representation and passes to homotopy orbits with respect to the $\Sigma_m$-action.
As we show in the following proposition, 
this map is a completion, in the sense that 
 \[ \ul{KU}^0_G(B\Sigma_m)(G/H) \iso KU^0_H(B\Sigma_m) 
 \iso RU(H\times \Sigma_m)^{\wedge}_{I_{\Sigma_m}}
 \iso \MackUpOrbit{RU}{G/H}^{\wedge}_{I_{\Sigma_m}}.
 \]

\begin{prop}
For any groups $H$ and $L$, the map 
\[ (-)_{hL}\colon RU(H\times L) \rtarr KU_H^0(BL)\]
is completion at the augmentation ideal $I_L\subset RU(L)$.
\end{prop}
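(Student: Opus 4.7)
The plan is to derive the statement from two classical ingredients: the non-equivariant Atiyah--Segal completion theorem and a Künneth-type formula for $H$-equivariant $K$-theory of a space with trivial $H$-action.

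First, I would establish that for any trivial $H$-space $Y$ there is a natural isomorphism
\[ KU_H^0(Y) \iso RU(H) \otimes_\Z KU^0(Y).\]
For $Y$ a finite $H$-CW complex this is verified by induction on cells from the base case $Y = \mathrm{pt}$, where both sides equal $RU(H)$. Equivalently, this reflects the fact that, for finite $H$, the categorical $H$-fixed-point spectrum $(KU_H)^H$ splits as a finite product of copies of $KU$ indexed by a $\Z$-basis of $RU(H)$. For $Y = BL$, modelled as the colimit of its finite skeleta, the formula persists: because $RU(H)$ is a finitely generated free $\Z$-module, the functor $(-) \otimes RU(H)$ commutes with the relevant inverse limits and preserves vanishing of $\lim^1$-terms.

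Second, I would invoke the classical Atiyah--Segal completion theorem: the map $RU(L) \rtarr KU^0(BL)$ sending an $L$-representation $W$ to the associated vector bundle $EL \times_L W \rtarr BL$ identifies $KU^0(BL)$ with the $I_L$-adic completion $RU(L)^{\wedge}_{I_L}$. Combining the two inputs yields
\[ KU_H^0(BL) \iso RU(H) \otimes KU^0(BL) \iso RU(H) \otimes RU(L)^{\wedge}_{I_L} \iso RU(H \times L)^{\wedge}_{I_L},\]
where the final isomorphism uses the tensor decomposition $RU(H \times L) \iso RU(H) \otimes RU(L)$ valid for finite groups, together with the fact that tensoring a Noetherian ring with the finitely generated free $\Z$-module $RU(H)$ commutes with $I_L$-adic completion.

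To finish, I would check that this composite isomorphism agrees with the map $(-)_{hL}$ on pure tensors: for $V \in RU(H)$ and $W \in RU(L)$, the homotopy $L$-orbits of $V \boxtimes W$ yield the $H$-equivariant bundle $V \otimes (EL \times_L W)$ over $BL$, which corresponds to $V \otimes W \in RU(H) \otimes RU(L)^{\wedge}_{I_L}$ under the identifications above. The main obstacle is justifying the Künneth splitting in the first step; while standard, it relies on structural properties of genuine equivariant complex $K$-theory at finite groups and deserves an explicit citation. An alternative route would be to deduce the isomorphism directly from a parametrised/family version of the Atiyah--Segal completion theorem applied to $H \times L$ and the $L$-free family, though one must then take care in identifying the relevant ideal as $I_L \cdot RU(H \times L)$.
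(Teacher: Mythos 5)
Your argument is correct and follows essentially the same route as the paper: split $KU_H^0(BL)\iso RU(H)\otimes KU^0(BL)$ using triviality of the $H$-action, apply the classical Atiyah--Segal completion theorem to $RU(L)\rtarr KU^0(BL)$, and identify $RU(H)\otimes RU(L)^{\wedge}_{I_L}$ with $RU(H\times L)^{\wedge}_{I_L}$ via the finite free base change. The paper is terser on the K\"unneth splitting and the compatibility of maps, both of which you fill in reasonably.
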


\begin{proof}
We have a 
commuting square
\[
            \begin{tikzcd}
                 RU(H\times L) \arrow{d}[swap]{\iso} \ar[r]
                 &
                 KU_H^0(BL) \arrow{d}{\iso}
                  \\
                  RU(H)\otimes RU(L)  \ar[r] 
                  & 
                  RU(H) \otimes KU(BL),
                               \end{tikzcd}
\]
where the right vertical map is an isomorphism because $H$ acts trivially on $BL$.
Now the Atiyah-Segal  Completion Theorem \cite{AtiyahSegal} states that $RU(L) \rtarr KU(BL)$ is completion at $I_L$.
The isomorphism
\[\begin{split}
 RU(H) \otimes RU(L)^{\wedge}_{I_L} & \iso RU(H)\otimes RU(L)\otimes_{RU(L)} RU(L)^{\wedge}_{I_L} \\
 &\iso \big( RU(H)\otimes RU(L)\big) \otimes_{RU(L)} RU(L)^{\wedge}_{I_L}
 \end{split} \]
 finishes the proof.
\end{proof}
 
As in \cref{sec:sphere}, we focus on the power operation with target $\MackUp{RU}$. 
Denote by $\mathrm{ev}_\sigma
\colon RU(\Sigma_m)\rtarr \Z$ the homomorphism that evaluates the character of a representation at an $m$-cycle. This homomorphism is precisely the quotient map $KU_{\Sigma_m}^0(\ast) \rtarr KU_{\Sigma_m}^0(\ast)/I_{\Tr}$ by the transfer ideal. 

\begin{prop}[\cite{AtPowOps}]
The composition
\[ 
{KU}_G^0(X) \powerxarr{P_m}  {KU}^0_{G\times \Sigma_m}(X) \iso {KU}_G^0(X) \otimes RU(\Sigma_m) 
\xrtarr{\id\otimes \mathrm{ev}_\sigma} KU^0_G(X)\otimes \Z
\]
is the Adams operation $\psi^m$.
\end{prop}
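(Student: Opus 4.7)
The plan is to verify the identity on characters and exploit the fact that virtual $G$-bundles are detected by their characters (via the equivariant splitting principle). First I would reduce to the case $X = \ast$: since $P_m$, $\mathrm{ev}_\sigma$, and $\psi^m$ are all natural in $X$, and since after pulling back to a suitable splitting cover every equivariant bundle becomes a sum of $G$-equivariant line bundles, it suffices to establish the identity on $G$-representations $V \in RU(G) = KU^0_G(\ast)$.

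The heart of the argument is then a trace calculation. The power operation sends $V$ to $V^{\otimes m}$, viewed as a $G \times \Sigma_m$-representation with $G$ acting diagonally and $\Sigma_m$ permuting factors. For $g \in G$ and an $m$-cycle $\sigma \in \Sigma_m$, the key identity is
\[
  \chi_{V^{\otimes m}}(g,\sigma) \;=\; \chi_V(g^m).
\]
Writing $g$ as a matrix $A = (a_{ij})$ in a basis of $V$, the diagonal matrix entries of $(g,\sigma)$ acting on $V^{\otimes m}$ are indexed by $m$-tuples $(i_1,\dots,i_m)$ and, after summation, telescope into the trace of $A^m$, giving $\chi_V(g^m)$. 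Under the Künneth isomorphism $RU(G \times \Sigma_m) \cong RU(G) \otimes RU(\Sigma_m)$, the homomorphism $\mathrm{ev}_\sigma$ is precisely ``extract the character value at $\sigma$'', so the composite sends $[V]$ to the virtual class in $RU(G)$ whose character at $g$ is $\chi_V(g^m)$, which is the defining property of $\psi^m[V]$.

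To invoke the splitting principle cleanly at the level of spaces, one wants the composite to be a ring homomorphism. This holds because $\mathrm{ev}_\sigma$ annihilates the transfer ideal $\ul{I}_{\Tr}$: a class transferred from a proper Young subgroup $\Sigma_i \times \Sigma_j \subsetneq \Sigma_m$ with $i,j > 0$ has character supported on elements of $\Sigma_m$ conjugate into $\Sigma_i \times \Sigma_j$, and no $m$-cycle is such an element. Hence the composite factors through the quotient by $\ul{I}_{\Tr}$, where by \cref{Green_Main_Thm} it is multiplicative and additive. The only real obstacle is bookkeeping: fixing conventions for how $\sigma$ acts on $V^{\otimes m}$ (the $\sigma$ versus $\sigma^{-1}$ issue) and massaging the resulting index-shifted summation into the clean form $\mathrm{tr}(A^m)$. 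The underlying computation is classical and is carried out in Atiyah \cite{AtPowOps}.
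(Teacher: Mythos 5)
The paper offers no proof of this proposition---it is quoted directly from Atiyah's \emph{Power operations in $K$-theory}---and your sketch is a faithful reconstruction of the standard argument there: the trace identity $\chi_{V^{\otimes m}}(g,\sigma)=\chi_V(g^m)$ for an $m$-cycle $\sigma$, the vanishing of $\mathrm{ev}_\sigma$ on classes induced from proper Young subgroups $\Sigma_i\times\Sigma_j$, and naturality plus the splitting principle to pass from line bundles and representations to general classes. This is essentially the cited source's approach, so there is nothing further to add.
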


In other words, 
\[ \ul{RU}_G \powerxarr{P_m/\ul{I}_{\Tr}} \MackUp{RU}/\ul{I}_{\Tr} \iso \ul{RU_G}\]
is levelwise the Adams operation $\psi^m$. As in \cref{Green_ITr_Thm}, this is a map of coefficient systems of commutative rings but not a map of Green functors.

\begin{ex}\label{ex:RUC2}
      Consider the case $G=C_2$ and $m=2$. Then $RU(C_2)= \Z\{1,s\}$, where $s$ is the sign representation, satisfying $s^2=1$.
      Then the Mackey induction sends $1\in RU(e)=\Z$ to the regular representation $1+s$.   
      The ring homomorphism $\psi^2$ squares both of the 1-dimensional representations $1$ and $s$, so the diagram
      \[ \begin{tikzcd}
                  RU(C_2) \arrow[r,"\psi^2",color=\powercolor] & RU(C_2) \\
                  RU(e)=\Z \arrow[r,"\psi^2=\id",color=\powercolor] \arrow[u,color=\inductioncolor] & RU(e)=\Z \arrow[u,color=\inductioncolor] 
            \end{tikzcd}\]
      does {\it not} commute, since
      \begin{equation}
            \label{KU22_EQ}
            \begin{tikzcd}
                  1+s \arrow[mapsto, r,color=\powercolor]  & 1^2+s^2=2 \neq 1+s \\
                  1 \arrow[mapsto, r,color=\powercolor] \arrow[mapsto,color=\inductioncolor]{u} &  1. \arrow[mapsto,color=\inductioncolor, end anchor={[xshift=4ex]}]{u}
            \end{tikzcd}
      \end{equation}
      
      Following \cref{JG_ACTION_PRIME_NONBOREL_THM},
      we observe that $\underline{J}$ is generated by $\underline{I}_{\Tr}$ as well as an additional transfer:
      we must further collapse the image of
\[
      \begin{tikzcd}[row sep = tiny]
            RU(D) \arrow[r, color=\inductioncolor] &  RU(C_2\times \Sigma_2) \iso RU(C_2) \otimes RU(\Sigma_2) \arrow[r,"1\otimes \mathrm{ev}_\sigma"] &  RU(C_2) \otimes \Z, \\
            1 \arrow[r, mapsto] &  1+s\overline{s} \arrow[r, mapsto] &  1-s
      \end{tikzcd}
      \]
      where $D\leq C_2\times \Sigma_2$ is once again the diagonal subgroup and 
      $\overline{s}\in RU(\Sigma_2)$ is the sign representation.
      Thus, collapsing the ideal $\ul{J}$ imposes the relation $s \sim 1$,
      in particular making \eqref{KU22_EQ} commute. 
      The resulting quotient Green functor is the constant Mackey functor $\mf{\Z}$, and the power operation of Green functors
      \[
            P_2/\ul{J} \, \colon \, \mf{RU}_{C_2} \powerarr \MackUpGroupPower{RU}{C_2}{2}/\ul{J} \iso \mf{\Z}
      \]
      is the augmentation, given by restricting to the trivial subgroup.
\end{ex}

\begin{ex}\label{ex:RUS3}
Consider now $G=\Sigma_3$ and $m=2$. We have
\[ RU(\Sigma_3) = \Z[s,W]/(s^2-1,s W-W, W^2-W-s-1),\]
where $W$ is the reduced standard representation,
and
\[ RU(C_3) = \Z[\lambda]/(\lambda^3 -1).\]
By
\cref{JG_ACTION_PRIME_NONBOREL_THM}, the ideal $\underline{J}$ is generated by $\underline{I}_{\Tr}$ and three additional transfers:
\[
      \begin{tikzcd}[row sep = tiny]
            RU(D) \arrow[r, color=\inductioncolor]
            &
            RU(\Sigma_3 \times \Sigma_2) \cong RU(\Sigma_3) \otimes RU(\Sigma_2) \arrow[r, "1 \otimes \mathrm{ev}_\sigma"]
            & 
            RU(\Sigma_3) \otimes \mathbb Z, &
            \\
            1 \arrow[r, mapsto]
            &
            1 + s \bar s + W + W \bar s \arrow[r, mapsto]
            &[10pt]
            1-s
            \\ 
            RU(\Gamma(\mathrm{sgn})) \arrow[r, color=\inductioncolor]
            &
            RU(\Sigma_3 \times \Sigma_2) \cong RU(\Sigma_3) \otimes RU(\Sigma_2) \arrow[r, "1 \otimes \mathrm{ev}_\sigma"]
            &
            RU(\Sigma_3) \otimes \mathbb Z, & \hspace{-2em} \text{and}
            \\
            1 \arrow[r, mapsto]
            &
            1+s\bar s \arrow[r, mapsto]
            &
            1-s
            \\ 
            RU(D) \arrow[r, color=\inductioncolor]
            &
            RU(C_2 \times \Sigma_2) \cong RU(C_2) \otimes RU(\Sigma_2) \arrow[r, "1 \otimes \mathrm{ev}_\sigma"]
            &
            RU(C_2) \otimes \mathbb Z,
            \\
            1 \arrow[r, mapsto]
            &
            1 + s \bar s \arrow[r, mapsto]
            &
            1-s
      \end{tikzcd}
\]
where
$C_2 \leq \Sigma_3$
is a choice of order two subgroup,
$D \leq C_2\times \Sigma_2 \leq \Sigma_3 \times \Sigma_2$ is the diagonal order 2 subgroup,
$\mathrm{sgn} \colon \Sigma_3 \to \Sigma_2$ is the sign homomorphism, and
$\Gamma(\mathrm{sgn}) \leq \Sigma_3 \times \Sigma_2$ the associated graph subgroup.
As in \cref{ex:RUC2}, we conclude that we must impose the relation $s\sim 1$ in the cyclic 2-subgroups and in $RU(\Sigma_3)$. 
The resulting power operation of Green functors
\[ P_2/\ul{J} \, \colon \, \ul{RU}_{\Sigma_3} \powerarr \MackUpGroupPower{RU}{\Sigma_3}{2}/\ul{J}\]
is given by
\begin{center}
\begin{tikzpicture}
\node (GL) at (0,5) {$\Z\{1,s,W\}$};
\node (C3L) at (-2,3) {$\Z\{1,\lambda,\lambda^2\}$};
\node (C2L) at (2,2) {$\Z\{1,s\}$};
\node (eL) at (0,0) {$\Z$};

\draw[bend right=20,->] (C2L) to node[pos=0.4,left] {\mylittlematrix{1 & 1}} (eL);
\draw[bend right=20,color=\inductioncolor,->] (eL) to node[pos=0.6,right] {\mylittlematrix{1\\ 1}} (C2L);
\draw[bend right=20,->] (C3L) to node[left] {\mylittlematrix{1 & 1 & 1 }} (eL);
\draw[bend right=20,color=\inductioncolor,->] (eL) to node[right]{\mylittlematrix{1 \\ 1 \\ 1 }} (C3L);
\draw[bend right=20,->] (GL) to node[left,pos=0.7]{\mylittlematrix{1 & 0 & 1 \\ 0 & 1 & 1  }} (C2L);
\draw[bend right=20,color=\inductioncolor,->] (C2L) to node[right,pos=0.6,xshift={1pt}]{\mylittlematrix{1 & 0 \\ 0 & 1 \\ 1 & 1  }} (GL);
\draw[bend right=20,->] (GL) to node[left,pos=0.2,xshift={-1ex}]{\mylittlematrix{1 & 1 & 0 \\ 0 & 0 & 1 \\ 0 & 0 & 1 }} (C3L);
\draw[bend right=20,color=\inductioncolor,->] (C3L) to node[right,pos=0.2,xshift={1ex}] {\mylittlematrix{1 & 0 & 0 \\ 1 & 0 & 0 \\ 0 & 1 & 1}}(GL);

\node[ right = 60mm of GL] (G) at (0,5) {$\Z\{1,W\}$};
\node[ right = 60mm of C3L] (C3) at (-2,3) {$\Z\{1,\lambda,\lambda^2\}$};
\node[ right = 60mm of C2L] (C2) at (2,2) {$\Z$};
\node[ right = 60mm of eL] (e) at (0,0) {$\Z$.};

\draw[bend right=20,->] (C2) to node[pos=0.3,left,font=\tiny,xshift={-1ex}] {1} (e);
\draw[bend right=20,color=\inductioncolor,->] (e) to node[pos=0.6,right,font=\tiny] {2} (C2);
\draw[bend right=20,->] (C3) to node[left] {\mylittlematrix{1 & 1 & 1 }} (e);
\draw[bend right=20,color=\inductioncolor,->] (e) to node[right]{\mylittlematrix{1 \\ 1 \\ 1 }} (C3);
\draw[bend right=20,->] (G) to node[left,pos=0.7]{\mylittlematrix{1 & 2  }} (C2);
\draw[bend right=20,color=\inductioncolor,->] (C2) to node[right]{\mylittlematrix{1 \\ 1  }} (G);
\draw[bend right=20,->] (G) to node[left,pos=0.2,xshift={-1ex}]{\mylittlematrix{1 & 0 \\ 0 & 1 \\ 0 & 1}} (C3);
\draw[bend right=20,color=\inductioncolor,->] (C3) to node[right,pos=0.2,xshift={1ex}] {\mylittlematrix{2 & 0 & 0 \\ 0 & 1 & 1}}(G);

\draw[->,color=\powercolor] (eL) to node[above,font=\tiny] {1} (e);
\draw[->,bend left=10,color=\powercolor] (C2L) to node[below,pos=0.2] {\mylittlematrix{1 & 1 }}  (C2);
\draw[->,bend left=8,color=\powercolor] (C3L) to node[below,pos=0.8] {\mylittlematrix{1 & 0 & 0 \\ 0 & 0 & 1 \\ 0 & 1 & 0 }} (C3);
\draw[->,color=\powercolor] (GL) to node[above] {\mylittlematrix{1 & 1 & 0 \\ 0 & 0 & 1 }} (G);
\end{tikzpicture}
\end{center}
Here,  the value of $\psi_2(W)$ may be deduced by using the (character) embedding of the representation ring into the ring of class functions and using the formula for the Adams operation $\psi_2$ given in \cref{prop:ClassModTransfer}. Since all other representations that appear are 1-dimensional, the operation $\psi_2$ simply squares them.
\end{ex}

\begin{ex}
Consider now $G=\Sigma_3$ and $m=3$. 
We begin with the same source Green functor as in \cref{ex:RUS3}.
As in \cref{ex:RUS3}, $\underline{J}$ is generated by $\underline{I}_{\Tr}$ and three additional transfers:
\[
      \begin{tikzcd}[row sep = tiny]
            RU(D_{C_3}) \arrow[r, color=\inductioncolor]
            &
            RU(\Sigma_3 \times \Sigma_3) \cong RU(\Sigma_3) \otimes RU(\Sigma_3) \arrow[r, "1 \otimes \mathrm{ev}_{\sigma}"]
            &
            RU(\Sigma_3) \otimes \mathbb Z,
            \\
            1 \arrow[r, mapsto]
            &
            1+s + \bar s + s \bar s + 2 W \bar W \arrow[r, mapsto]
            &
            2+2s-2W
            \\
            RU(D_{\Sigma_3}) \arrow[r, color=\inductioncolor]
            &
            RU(\Sigma_3 \times \Sigma_3) \cong RU(\Sigma_3) \otimes RU(\Sigma_3) \arrow[r, "1 \otimes \mathrm{ev}_{\sigma}"]
            &
            RU(\Sigma_3) \otimes \mathbb Z, & \hspace{-2em} \text{and}
            \\
            1 \arrow[r, mapsto]
            &
            1+s\bar s + W \bar W \arrow[r, mapsto]
            &
            1+s-W
            \\
            RU(D_{C_3}) \arrow[r, color=\inductioncolor]
            &
            RU(C_3 \times \Sigma_3) \cong RU(C_3) \otimes RU(\Sigma_3) \arrow[r, "1 \otimes \mathrm{ev}_{\sigma}"]
            &
            RU(C_3) \otimes \mathbb Z,
            \\
            1 \arrow[r, mapsto]
            &
            1+\bar s \to \lambda \bar W + \lambda^2 \bar W \arrow[r, mapsto]
            &
            2 - \lambda - \lambda^2
      \end{tikzcd}
\]
where
$D_{C_3} \leq C_3 \times \Sigma_3$ is the order 3 subgroup generated by $(\rho, \sigma_3)$ for $\rho$ a generator of $C_3$ and $\sigma_3$ a 3-cycle, and
$D_{\Sigma_3} \leq \Sigma_3 \times \Sigma_3$ is the diagonal subgroup.
Thus we collapse the ideals $(W- s-1) \subset RU(\Sigma_3)$ and $(\lambda^2 +\lambda-2)\subset RU(C_3)$. The quotients are
\[ RU(\Sigma_3)/(W-s-1) \iso \Z\{1,s\} \qquad \text{and} \qquad RU(C_3)/(\lambda^2+\lambda-2) \iso \Z\{1\} \oplus \Z/3\{\bar{\lambda}\},\]
where $\bar{\lambda} = \lambda-1$.

The resulting power operation of Green functors
\[ P_3/\ul{J} \, \colon \, \ul{RU}_{\Sigma_3} \powerarr \MackUpGroupPower{RU}{\Sigma_3}{3}/\ul{J}\]
is given by
\begin{center}
\begin{tikzpicture}
\node (GL) at (0,5) {$\Z\{1,s,W\}$};
\node (C3L) at (-2,3) {$\Z\{1,\lambda,\lambda^2\}$};
\node (C2L) at (2,2) {$\Z\{1,s\}$};
\node (eL) at (0,0) {$\Z$};

\draw[bend right=20,->] (C2L) to node[pos=0.4,left] {\mylittlematrix{1 & 1}} (eL);
\draw[bend right=20,color=\inductioncolor,->] (eL) to node[pos=0.6,right] {\mylittlematrix{1\\ 1}} (C2L);
\draw[bend right=20,->] (C3L) to node[left] {\mylittlematrix{1 & 1 & 1 }} (eL);
\draw[bend right=20,color=\inductioncolor,->] (eL) to node[right]{\mylittlematrix{1 \\ 1 \\ 1 }} (C3L);
\draw[bend right=20,->] (GL) to node[left,pos=0.7]{\mylittlematrix{1 & 0 & 1 \\ 0 & 1 & 1  }} (C2L);
\draw[bend right=20,color=\inductioncolor,->] (C2L) to node[right,pos=0.6,xshift={1pt}]{\mylittlematrix{1 & 0 \\ 0 & 1 \\ 1 & 1  }} (GL);
\draw[bend right=20,->] (GL) to node[left,pos=0.2,xshift={-1ex}]{\mylittlematrix{1 & 1 & 0 \\ 0 & 0 & 1 \\ 0 & 0 & 1 }} (C3L);
\draw[bend right=20,color=\inductioncolor,->] (C3L) to node[right,pos=0.2,xshift={1ex}] {\mylittlematrix{1 & 0 & 0 \\ 1 & 0 & 0 \\ 0 & 1 & 1}}(GL);

\node[ right = 60mm of GL] (G) at (0,5) {$\Z\{1,s\}$};
\node[ right = 60mm of C3L] (C3) at (-2,3) {$\Z\oplus \Z/3\{\bar{\lambda}\}$};
\node[ right = 60mm of C2L] (C2) at (2,2) {$\Z\{1,s\}$};
\node[ right = 60mm of eL] (e) at (0,0) {$\Z$.};

\draw[bend right=20,->] (C2) to node[pos=0.3,left,xshift={-1ex}] {\mylittlematrix{1 & 1}} (e);
\draw[bend right=20,color=\inductioncolor,->] (e) to node[pos=0.6,right] {\mylittlematrix{1 \\ 1}}  (C2);
\draw[bend right=20,->] (C3) to node[left] {\mylittlematrix{1 & 0}} (e);
\draw[bend right=20,color=\inductioncolor,->] (e) to node[right]{\mylittlematrix{3 \\ 0}} (C3);
\draw[bend right=20,->] (G) to node[left,pos=0.7]{\mylittlematrix{1 & 0 \\ 0 & 1  }} (C2);
\draw[bend right=20,color=\inductioncolor,->] (C2) to node[right]{\mylittlematrix{2 & 1 \\ 1 & 2  }} (G);
\draw[bend right=20,->] (G) to node[left,pos=0.2,xshift={-1ex}]{\mylittlematrix{1 & 1 \\ 0 & 0 }} (C3);
\draw[bend right=20,color=\inductioncolor,->] (C3) to node[right,pos=0.3,xshift={2ex}] {\mylittlematrix{1 & 0  \\ 1 & 0  }}(G);

\draw[->,color=\powercolor] (eL) to node[above,font=\tiny] {1} (e);
\draw[->,bend left=10,color=\powercolor] (C2L) to node[above,pos=0.17] {\mylittlematrix{1 & 0 \\ 0 & 1 }}  (C2);
\draw[->,bend left=8,color=\powercolor] (C3L) to node[above,pos=0.8] {\mylittlematrix{1 & 1 & 1 \\ 0 & 0 & 0}} (C3);
\draw[->,color=\powercolor] (GL) to node[above] {\mylittlematrix{1 & 0 & 1 \\ 0 & 1 & 1 }} (G);
\end{tikzpicture}
\end{center}
Again, the value of $\psi_3$ on representations may be deduced via the character embedding.
\end{ex}

\subsection{Class functions } 
\label{sec:classfunctions}

Rings of class functions appear in homotopy theory as approximations to cohomology theories. In particular, equivariant $KU$-theory is approximated by the ring of class functions $Cl(G,\C)$, which is the ring of $\C$-valued functions on the set of conjugacy classes of $G$. Further, Hopkins, Kuhn, and Ravenel \cite{hkr} have shown that the Morava $E$-theories, which are generalizations of $p$-adic $KU$-theory, all admit similar approximations by a ring of ``generalized class functions." We introduce this ring for the ``height $2$" Morava $E$-theories in \cref{sec:heighttwo}. 

The rings of class functions fit together to give Green functors with restriction and induction maps compatible with the restriction and induction maps for the equivariant cohomology theory that they approximate.

\subsubsection{$\C$-valued class functions}
\label{sec:heightone}
We begin by considering $Cl(G) = Cl(G, \C)$, the ring of class functions that arises in the representation theory of finite groups.
For any group $G$, we have a Mackey functor $\ul{Cl}_G$ defined by $\ul{Cl}_G(G/H) = Cl(H)$. For $H \leq K$, the restriction map is given by simply restricting class functions along the map $H/\mathrm{conj} \to K/\mathrm{conj}$. For $H\leq K$, the induction homomorphism 
\[ \mathrm{\Tr}\colon Cl(H) \indarr Cl(K)\]
is (cf. \cite[Theorem~12]{Serre})
\begin{equation}\label{ClassTransfer}
 \mathrm{\Tr}(f)(k) = \sum\limits_{g H \in (K/H)^k} f(g^{-1} k g), 
 \end{equation}
where $(K/H)^k$ denotes the $k$-fixed points under the left action of $K$ on $K/H$.

Conjugacy classes in $\Sigma_m$ correspond to partitions of $m$. Let $\{m_1, \ldots, m_j\}$ be a partition of $m$, so that $m_1 + \ldots + m_j = m$. The power operation $P_m \colon Cl(G) \powerarr Cl(G\times \Sigma_m)$ is given by
\[ P_m(f)(g,\{m_1, \ldots, m_j\}) = \prod_{i=1}^{j} f(g^{m_i}).\]

The following result is well-known.

\begin{prop}\label{prop:ClassModTransfer} The quotient $Cl(G\times \Sigma_m)/I_{\Tr}$ is isomorphic to $Cl(G)$, and the composition
\[ Cl(G) \powerxarr{P_m} Cl(G\times \Sigma_m) \rtarr Cl(G\times \Sigma_m)/I_{\Tr} \iso Cl(G)\]
is the Adams operation $\psi_m$, given by $\psi_m(f)(g) = f(g^m)$.
\end{prop}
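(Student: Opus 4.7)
The plan is to first establish the ring isomorphism $Cl(G \times \Sigma_m)/I_{\Tr} \iso Cl(G)$ via an evaluation map, and then verify that the composite with $P_m$ is the Adams operation. For the first step, I would use the decomposition $Cl(G \times \Sigma_m) \iso Cl(G) \otimes_{\C} Cl(\Sigma_m)$. A short inspection of \eqref{ClassTransfer} shows that the cosets appearing in the transfer along $G \times \Sigma_i \times \Sigma_j \into G \times \Sigma_m$ are canonically in bijection with $\Sigma_m/(\Sigma_i \times \Sigma_j)$, and that both the fixed-point condition and the inner conjugation depend only on the $\Sigma_m$-factor. Hence this transfer takes the form $\mathrm{id}_{Cl(G)} \otimes \mathrm{Tr}_{\Sigma_i \times \Sigma_j}^{\Sigma_m}$, and consequently $I_{\Tr} = Cl(G) \otimes I_{\Tr}^{\Sigma_m}$ for the analogous ideal $I_{\Tr}^{\Sigma_m} \subseteq Cl(\Sigma_m)$. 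It therefore suffices to prove $Cl(\Sigma_m)/I_{\Tr}^{\Sigma_m} \iso \C$.

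I would then study the evaluation homomorphism $\mathrm{ev}_{\sigma_m} \colon Cl(\Sigma_m) \rtarr \C$ at an $m$-cycle $\sigma_m$. Since an $m$-cycle has no conjugate lying in a proper Young subgroup $\Sigma_i \times \Sigma_j$ with $i,j > 0$ (its single cycle cannot preserve a nontrivial partition of $\{1,\dots,m\}$), the fixed-point sum defining $\mathrm{Tr}_{\Sigma_i \times \Sigma_j}^{\Sigma_m}(f)(\sigma_m)$ in \eqref{ClassTransfer} is empty for every $f$, so $\mathrm{ev}_{\sigma_m}$ annihilates $I_{\Tr}^{\Sigma_m}$. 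For the reverse inclusion $\ker(\mathrm{ev}_{\sigma_m}) \subseteq I_{\Tr}^{\Sigma_m}$, I would use the basis $\{\delta_\lambda\}$ of $Cl(\Sigma_m)$ indexed by partitions of $m$ and show that $\delta_\lambda \in I_{\Tr}^{\Sigma_m}$ whenever $\lambda \neq (m)$. For such a $\lambda = (\lambda_1, \dots, \lambda_k)$ with $k \geq 2$, set $i = \lambda_1$, $j = m - i$, pick $\sigma_\lambda \in \Sigma_i \times \Sigma_j$ of cycle type $\lambda$, and compute $\mathrm{Tr}(\delta_{\sigma_\lambda})$ from \eqref{ClassTransfer}: it vanishes on cycle types $\mu \neq \lambda$ (since $\Sigma_m$-conjugates of the wrong type cannot meet the $\Sigma_i \times \Sigma_j$-class of $\sigma_\lambda$) and takes a positive integer value at $\sigma_\lambda$ itself, so it is a positive scalar multiple of $\delta_\lambda$.

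The identification of the composite with $\psi_m$ is then immediate from the explicit formula for $P_m$ recalled just above the proposition: an $m$-cycle has cycle type consisting of the single part $m$, so $P_m(f)(g, \sigma_m) = f(g^m) = \psi_m(f)(g)$. The only nontrivial step is the transfer computation in the middle paragraph, but this is a direct application of \eqref{ClassTransfer} and should present no real obstacle.
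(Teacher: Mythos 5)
Your proof is correct and follows essentially the same route as the paper's: both arguments show that the indicator function of each non-long-cycle conjugacy class is a positive multiple of a transferred class function, computed directly from \eqref{ClassTransfer}, so that only the value at $(\overline{g},\sigma_m)$ survives the quotient. Your version merely adds the (harmless) reduction via $Cl(G\times\Sigma_m)\iso Cl(G)\otimes Cl(\Sigma_m)$ and spells out the transfer computation that the paper asserts in one line, using the two-part Young subgroup $\Sigma_{\lambda_1}\times\Sigma_{m-\lambda_1}$ where the paper uses the full Young subgroup $\prod_i\Sigma_{m_i}$.
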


\begin{proof}
For any proper partition $\{m_1, \ldots, m_j\}$ of $m$ and conjugacy class $\overline{g}$ in $G$, class functions on $G\times \Sigma$ supported on $(\overline{g},\{m_1, \ldots, m_j\})$ are in the image of the transfer along $G\times \prod_i \Sigma_{m_i}\into G\times \Sigma_m$. It follows that the quotient $Cl(G\times \Sigma_m)/I_{\Tr}$ can be identified with functions supported on $(\overline{g},\sigma)$, where $\sigma=(1\cdots m)$. This identifies the quotient with $Cl(G)$.

By the previous discussion, after passing to the quotient by the transfer ideals for subgroups $G\times \Sigma_i \times \Sigma_j$, only the value of the class function on the long cycle is retained, and 
\[P_m(f) (g,(1\cdots m)) = f(g^m) = \psi_m(f)(g). \qedhere\]
\end{proof}

Let $G_{\pdiv}\subset G$ denote the set of elements whose orders are divisible by $p$, and let 
$G_{\pnodiv}\subset G$ 
denote the set of elements whose orders are {\em not } divisible by $p$.
Similarly, we denote by $Cl_{\pdiv}$ and $Cl_{\pnodiv}$ the rings of $\C$-valued functions on $G_{\pdiv}/\mathrm{conj}$ and $G_{ \pnodiv}/\mathrm{conj}$, respectively.
Then the decomposition $G = G_{\pdiv} \amalg G_{\pnodiv}$
induces an isomorphism of commutative rings 
$Cl(G) \iso Cl_{\pdiv}(G) \times Cl_{\pnodiv}(G)$.

\begin{prop}The Green functor structure on $\ul{Cl}_G$ descends to  Green functor structures on $\ul{Cl_{\pdiv}}_G$ and $\ul{Cl_{\pnodiv}}_G$.
\end{prop}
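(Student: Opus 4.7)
The strategy is to produce natural complementary central idempotents on $\ul{Cl}_G$ realizing the decomposition. For each subgroup $H \leq G$, let $e_H^{\pdiv} \in Cl(H)$ be the characteristic function of $H_{\pdiv}$ and $e_H^{\pnodiv}$ the characteristic function of $H_{\pnodiv}$; these satisfy $e_H^{\pdiv} + e_H^{\pnodiv} = 1$ and $e_H^{\pdiv} \cdot e_H^{\pnodiv} = 0$, so they split $Cl(H)$ as the product $Cl_{\pdiv}(H) \times Cl_{\pnodiv}(H)$, with $Cl_{\pdiv}(H) = e_H^{\pdiv} \cdot Cl(H)$ and analogously on the other side.

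The first step I would carry out is checking that restriction is compatible with these idempotents. The key observation is that an element $h \in H \leq K$ has the same order regarded as an element of $H$ or of $K$, so $H_{\pdiv} = H \cap K_{\pdiv}$. This gives $\Res^K_H(e_K^{\pdiv}) = e_H^{\pdiv}$, and similarly for $\pnodiv$. Since restriction of class functions is a ring homomorphism, it sends each summand to its counterpart.

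For induction, the argument rests on Frobenius reciprocity in $\ul{Cl}_G$: for $f \in Cl(H)$,
\[
\Tr^K_H(e_H^{\pdiv} \cdot f) \ = \ \Tr^K_H\!\big(\Res^K_H(e_K^{\pdiv}) \cdot f\big) \ = \ e_K^{\pdiv} \cdot \Tr^K_H(f),
\]
so $\Tr^K_H$ carries $Cl_{\pdiv}(H)$ into $Cl_{\pdiv}(K)$, and likewise $Cl_{\pnodiv}(H)$ into $Cl_{\pnodiv}(K)$. Combined with the double-coset compatibility inherited from $\ul{Cl}_G$, this identifies $\ul{Cl_{\pdiv}}_G$ and $\ul{Cl_{\pnodiv}}_G$ as sub-Mackey functors of $\ul{Cl}_G$. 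Each is already a Green functor under the induced operations: the ring structure at every level is inherited, restrictions remain ring homomorphisms, and Frobenius reciprocity is inherited. Equivalently, we obtain a splitting $\ul{Cl}_G \iso \ul{Cl_{\pdiv}}_G \times \ul{Cl_{\pnodiv}}_G$ of Green functors. There is essentially no substantive obstacle; the argument amounts to recognizing the natural complementary idempotents and invoking Frobenius reciprocity for the existing Green functor $\ul{Cl}_G$.
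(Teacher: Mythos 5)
Your proof is correct, and it rests on the same key observation as the paper's: an element of $H\leq K$ has the same order whether viewed in $H$ or in $K$, so $H_{\pdiv}=H\cap K_{\pdiv}$ and the decomposition is compatible with subgroup inclusions. The paper's proof is a one-line appeal to this fact, leaving the reader to check directly from the explicit transfer formula \eqref{ClassTransfer} that $\Tr^K_H$ preserves the supports (which it does, since $g^{-1}kg$ has the same order as $k$). Your packaging via the complementary central idempotents $e_H^{\pdiv}, e_H^{\pnodiv}$ is a clean formal alternative: once you know restriction carries $e_K^{\pdiv}$ to $e_H^{\pdiv}$, Frobenius reciprocity (which $\ul{Cl}_G$ satisfies, either by the paper's standing definition of Green functor or by direct computation from \eqref{ClassTransfer}) forces the transfers to respect the splitting, with no need to inspect the transfer formula again. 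This buys you a slightly more robust argument — it shows that any Green functor equipped with a restriction-compatible system of central idempotents splits as a product of Green functors — at the cost of invoking Frobenius reciprocity rather than the elementary support computation. Both routes are complete; yours makes explicit the splitting $\ul{Cl}_G\iso \ul{Cl_{\pdiv}}_G\times \ul{Cl_{\pnodiv}}_G$ that the paper only records at the level of each $Cl(G)$.
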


\begin{proof}
This follows from the fact that any subgroup inclusion $H\into G$ induces inclusions $H_{\pdiv} \into G_{\pdiv}$ and $H_{\pnodiv} \into G_{\pnodiv}$.
\end{proof}

\begin{prop} 
\label{Jpdiv}
If $m=p$ is prime, the image of $\ul{J}$ under 
the isomorphism of \cref{prop:ClassModTransfer}
is the Mackey ideal $\ul{Cl_{\pdiv}}_G$.
 The  power operation of Green functors
\[ P_p/\ul{J} \colon \ul{Cl}_G \powerarr \MackUpGroupPower{Cl}{G}{p}/\ul{J} \iso \ul{Cl_{\pnodiv}}_G\]
is the composition
\[ \ul{Cl}_G \xrtarr{\text{restrict}}  \ul{Cl_{\pnodiv}}_G \xrtarr{\psi_p}  \ul{Cl_{ \pnodiv}}_G.\]
\end{prop}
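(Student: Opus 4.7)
My approach is to work one level at a time, leveraging the ring isomorphism $Cl(H\times\Sigma_p)/\ul{I}_{\Tr}(G/H)\iso Cl(H)$ of \cref{prop:ClassModTransfer} given by evaluation at $(h,\sigma_p)$ where $\sigma_p$ denotes the long $p$-cycle. Since \cref{JSUBMACKEY_LEM} ensures that $\ul{J}$ is a Mackey ideal, identifying its image orbit-by-orbit will suffice. By \cref{JG_ACTION_PRIME_NONBOREL_THM}, modulo $\ul{I}_{\Tr}$, the image of $\ul{J}(G/H)$ in $Cl(H)$ is generated by the images of the transfers $\mathrm{Tr}\colon Cl(\Gamma(a)) \indarr Cl(H\times \Sigma_p)$ where $a\colon S\rtarr \Sigma_p$ ranges over homomorphisms with $S\leq H$ and $\im(a)$ containing a $p$-cycle.

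For the containment $\subseteq \ul{Cl_{\pdiv}}_G$, I would apply the transfer formula \eqref{ClassTransfer} and analyze when the value of $\mathrm{Tr}(f)$ at $(h,\sigma_p)$ can be nonzero. A nonzero summand requires a coset $x\Gamma(a)$ with $x^{-1}(h,\sigma_p)x=(s,a(s))\in\Gamma(a)$; in particular $a(s)$ must be $\Sigma_p$-conjugate to $\sigma_p$ and hence a $p$-cycle, forcing $p\mid|s|=|h|$. So the transfer vanishes at $(h,\sigma_p)$ whenever $h\in H_{\pnodiv}$, and the image under the long-cycle evaluation lies in $Cl_{\pdiv}(H)$.

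For the reverse containment, I would construct, for each $h\in H$ with $p\mid |h|$, an element of the image that is nonzero on the $H$-conjugacy class of $h$. Take $S=\langle h\rangle$ and define $a\colon S\rtarr \Sigma_p$ by $a(h)=\sigma_p$, which is a well-defined homomorphism precisely because $p\mid|h|$. Since $\Gamma(a)$ is cyclic, let $f\in Cl(\Gamma(a))$ be the indicator of the singleton conjugacy class $\{(h,\sigma_p)\}$. The transfer formula then counts cosets $x\Gamma(a)$ with $x^{-1}(h',\sigma_p)x=(h,\sigma_p)$, a count that is positive exactly when $h'$ is $H$-conjugate to $h$, with $x_{\Sigma_p}\in C_{\Sigma_p}(\sigma_p)=\langle\sigma_p\rangle$ unconstrained. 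As $h$ ranges over $H_{\pdiv}$ these class functions span $Cl_{\pdiv}(H)$.

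Finally, assembling the composite: \cref{prop:ClassModTransfer} identifies $P_p/\ul{I}_{\Tr}$ with $\psi_p$, and further quotienting by $\ul{Cl_{\pdiv}}_G$ sends $f\in Cl(H)$ to the function $h\mapsto f(h^p)$ on $H_{\pnodiv}$. Because $h\in H_{\pnodiv}$ forces $h^p\in H_{\pnodiv}$, this value depends only on $f|_{H_{\pnodiv}}$, yielding the desired factorization as restriction followed by $\psi_p$. The main obstacle is the spanning argument in the third paragraph: one must track the centralizer $C_{\Sigma_p}(\sigma_p)$ carefully to match the explicit transfer computation against the indicator functions of full $H$-conjugacy classes in $Cl_{\pdiv}(H)$ under the long-cycle evaluation.
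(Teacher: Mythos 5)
Your proof is correct and follows essentially the same route as the paper: both reduce via \cref{prop:ClassModTransfer} and \cref{JG_ACTION_PRIME_NONBOREL_THM} (equivalently \cref{PRIMESURJ_COR}) to analyzing transfers from graph subgroups using the explicit formula \eqref{ClassTransfer}, and both hit the indicator of a class $[h]$ with $p\mid |h|$ by transferring from $\Gamma(a)$ with $S=\langle h\rangle$ and $a(h)=\sigma_p$. If anything you are more thorough than the paper, which only spells out the containment $\ul{Cl_{\pdiv}}_G\subseteq \ul{J}$ and leaves the reverse containment and the identification of the composite with $\psi_p\circ\text{restrict}$ implicit.
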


\begin{proof}
By \cref{prop:ClassModTransfer}, it suffices to show that for any $g\in G_{\pdiv}$, any class function $f$ on $G\times \Sigma_p$ supported at $[g,\sigma]$ is in the image of the transfer from a graph subgroup as in \cref{PRIMESURJ_COR}. Let $S$ be the cyclic subgroup of $G$ generated by $g$, and let $a\colon S \rtarr \Sigma_p$ send $g$ to $\sigma$. Then \cref{ClassTransfer} shows that $\Tr(f)$ is also supported at $[g,\sigma]$. Furthermore, the value of $\Tr(f)$ at $[g,\sigma]$ is a positive integer multiple of $f([g,\sigma])$.
\end{proof}

If $G$ is a $p$-group, we get the following result.

\begin{cor} Suppose that $G$ is a $p$-group. Then the  $G$-Green functor $\MackUpPower{Cl}{p}/\ul{J}$
 is isomorphic to the constant Mackey functor at $\C$. The  power operation of Green functors
\[ \ul{Cl}_G \powerxarr{P_p/\ul{J}} \MackUpPower{Cl}{p}/\ul{J} \iso \ul{\C}\]
is given by evaluating a character at the identity element.
\end{cor}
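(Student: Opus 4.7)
The plan is to deduce this corollary directly from \cref{Jpdiv}, once we unpack what the subset $G_{\pnodiv}$ looks like for a $p$-group.

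First, I would observe that if $G$ is a $p$-group, then every subgroup $H \leq G$ is a $p$-group, so every element of $H$ has order a power of $p$. The only such order not divisible by $p$ is $1$, so $H_{\pnodiv} = \{e\}$, and evaluation at the identity gives an isomorphism of commutative rings $Cl_{\pnodiv}(H) \iso \C$ for every subgroup $H \leq G$.

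Next, I would check that these ring isomorphisms assemble into an isomorphism of $G$-Green functors between $\ul{Cl_{\pnodiv}}_G$ and the constant Mackey functor $\ul{\C}$. Under the identification $Cl_{\pnodiv}(H) \iso \C$, the restriction map for an inclusion $H \leq K$ of $p$-subgroups is simply the restriction of a function on $\{e\}$ to a function on $\{e\}$, which is the identity on $\C$. For the transfer, formula \eqref{ClassTransfer} gives, for $f \in Cl_{\pnodiv}(H)$,
\[
\Tr(f)(e) \; = \; \sum_{gH \in (K/H)^{e}} f(g^{-1} e g) \; = \; [K : H] \cdot f(e),
\]
since every coset is fixed by $e$. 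This matches the constant Mackey functor at $\C$ with restrictions equal to the identity and transfers equal to multiplication by the index.

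Finally, I would combine this with \cref{Jpdiv}, which identifies $P_p/\ul{J}$ at $G/H$ with the composition $\psi_p \circ \mathrm{Res}_{\pnodiv}$. For $f \in Cl(H)$,
\[
\bigl(\psi_p(\mathrm{Res}_{\pnodiv}(f))\bigr)(e) \; = \; \mathrm{Res}_{\pnodiv}(f)(e^p) \; = \; f(e),
\]
so under the identification $Cl_{\pnodiv}(H) \iso \C$ the reduced power operation is evaluation at the identity, as claimed. There is no real obstacle here; the only mild point to verify is the transfer computation, which is immediate from \eqref{ClassTransfer}.
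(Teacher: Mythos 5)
Your proposal is correct and follows the same route as the paper, which states this as an immediate consequence of \cref{Jpdiv}: for a $p$-group every subgroup $H$ has $H_{\pnodiv}=\{e\}$, so $\ul{Cl_{\pnodiv}}_G$ is the constant Mackey functor at $\C$ (with transfers multiplication by the index, as your computation from \eqref{ClassTransfer} confirms), and $\psi_p$ acts as the identity on functions supported at $e$. The verification of the transfer and the triviality of $\psi_p$ are exactly the routine checks the paper leaves implicit.
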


The following examples all follow from \cref{Jpdiv}. We include them for comparison with the examples of \cref{sec:KUG}.

\begin{ex}
For the group $G=\Z/2$ and $m=2$,
we have
\[ \begin{tikzcd}
Cl(\Z/2) \arrow[r,"\psi_2",color=\powercolor] \restrict{d} 
& Cl(\Z/2\times \Sigma_2)/I_{\Tr} \iso Cl(\Z/2) \restrict{d} \\
Cl(0)=\C \arrow[r,"\psi_2",color=\powercolor] \induct{u} 
& Cl(\Sigma_2)/I_{\Tr}\iso \C. \induct{u}
\end{tikzcd} \]
Tracing the diagram using induction maps gives
\begin{center}
\begin{tikzpicture}
\node (BL) at (0,0) {$a$};
\node (BR) at (4,0) {$a$.};
\node[font=\scriptsize, align=center]  (UL) [draw,rectangle] at (0,2) {$ 0 \mapsto 2a$ \\ $ 1 \mapsto 0$};
\node[font=\scriptsize, align=center] (UR) [draw,rectangle] at (4,2) {$0,\sigma\mapsto 2a$ \\ $1,\sigma\mapsto 2a$};
\node[font=\scriptsize, right = 0mm of UR] (neq)  {$\neq$};
\node[font=\scriptsize, right = 0mm of neq, align=center] (URR) [draw,rectangle]  {$0,\sigma\mapsto 2a$ \\ $1,\sigma\mapsto 0$};

\draw[|->,color=\powercolor] (BL) --  (BR);
\draw[|->,color=\inductioncolor] (BL) -- (UL);
\draw[|->,color=\inductioncolor] (BR) -- (URR);
\draw[|->,color=\powercolor] (UL) -- (UR);
\end{tikzpicture}
\end{center}
If we further quotient by the image of the transfer from the diagonal subgroup $C_2 \xrtarr{\Delta}\Z/2\times \Sigma_2$, then the induction diagram commutes, so that we have a map of Green functors. The quotient Green functor is constant at $\C$, and the 
resulting map of Green functors  $P_2 \colon \mf{Cl}_{\Z/2} \rtarr \mf{\C}$ is given by restriction of class functions to the identity element.
\end{ex}

\begin{ex} Consider $G=\Sigma_3$ and $m=2$.
We use $\rho$ to denote a 3-cycle and $\tau$ to denote (any choice of) transposition. We 
write $C_3$ and $C_2$ for the subgroups generated by $\rho$ and $\tau$

Then by \cref{Jpdiv}, in order for the power operation to be a map of Mackey functors, we must quotient $Cl(\Sigma_3)$ by the values on the conjugacy class of $\tau$, and we must also quotient $Cl(C_2)$  by the same conjugacy class. 
The resulting power operation of Green functors
\[ P_3/\ul{J} \, \colon \, \ul{Cl}_{\Sigma_3} \powerarr \MackUpGroupPower{Cl}{\Sigma_3}{2}/\ul{J}\]
is given by

\begin{center}
\begin{tikzpicture}
\node (GL) at (0,5) {$\C\{e,\rho,\tau\}$};
\node (C3L) at (-2,3) {$\C\{e,\rho,\rho^2\}$};
\node (C2L) at (2,2) {$\C\{e,\tau\}$};
\node (eL) at (0,0) {$\C$};

\draw[bend right=20,->] (C2L) to node[pos=0.4,left] {\mylittlematrix{1 & 0}} (eL);
\draw[bend right=20,color=\inductioncolor,->] (eL) to node[pos=0.6,right] {\mylittlematrix{2 \\ 0}} (C2L);
\draw[bend right=20,->] (C3L) to node[left] {\mylittlematrix{1 & 0 & 0 }} (eL);
\draw[bend right=20,color=\inductioncolor,->] (eL) to node[right]{\mylittlematrix{3 \\ 0 \\ 0 }} (C3L);
\draw[bend right=20,->] (GL) to node[left,pos=0.7]{\mylittlematrix{1 & 0 & 0 \\ 0 & 0 & 1  }} (C2L);
\draw[bend right=20,color=\inductioncolor,->] (C2L) to node[right,pos=0.6,xshift={1pt}]{\mylittlematrix{3  & 0 \\ 0 & 0 \\ 0 & 1  }} (GL);
\draw[bend right=20,->] (GL) to node[left,pos=0.2,xshift={-1ex}]{\mylittlematrix{1 & 0 & 0 \\ 0 & 1 & 0 \\ 0 & 1 & 0 }} (C3L);
\draw[bend right=20,color=\inductioncolor,->] (C3L) to node[right,pos=0.2,xshift={1ex}] {\mylittlematrix{2 & 0 & 0 \\ 0 & 1 & 1 \\ 0 & 0 & 0}}(GL);

\node[ right = 60mm of GL] (G) at (0,5) {$\C\{e,\rho\}$};
\node[ right = 60mm of C3L] (C3) at (-2,3) {$\C\{e,\rho,\rho^2\}$};
\node[ right = 60mm of C2L] (C2) at (2,2) {$\C$};
\node[ right = 60mm of eL] (e) at (0,0) {$\C$};

\draw[bend right=20,->] (C2) to node[pos=0.3,left,font=\tiny,xshift={-1ex}] {1} (e);
\draw[bend right=20,color=\inductioncolor,->] (e) to node[pos=0.6,right,font=\tiny] {2} (C2);
\draw[bend right=20,->] (C3) to node[left] {\mylittlematrix{1 & 0 & 0 }} (e);
\draw[bend right=20,color=\inductioncolor,->] (e) to node[right]{\mylittlematrix{3 \\ 0 \\ 0 }} (C3);
\draw[bend right=20,->] (G) to node[left,pos=0.7]{\mylittlematrix{1 & 0  }} (C2);
\draw[bend right=20,color=\inductioncolor,->] (C2) to node[right]{\mylittlematrix{3 \\ 0  }} (G);
\draw[bend right=20,->] (G) to node[left,pos=0.2,xshift={-1ex}]{\mylittlematrix{1 & 0 \\ 0 & 1 \\ 0 & 1}} (C3);
\draw[bend right=20,color=\inductioncolor,->] (C3) to node[right,pos=0.2,xshift={1ex}] {\mylittlematrix{2 & 0 & 0 \\ 0 & 1 & 1}}(G);

\draw[->,color=\powercolor] (eL) to node[above,font=\tiny] {1} (e);
\draw[->,bend left=10,color=\powercolor] (C2L) to node[below,pos=0.2] {\mylittlematrix{1 & 0 }}  (C2);
\draw[->,bend left=8,color=\powercolor] (C3L) to node[below,pos=0.8] {\mylittlematrix{1 & 0 & 0 \\ 0 & 0 & 1 \\ 0 & 1 & 0 }} (C3);
\draw[->,color=\powercolor] (GL) to node[above] {\mylittlematrix{1 & 0 & 0 \\ 0 & 1 & 0 }} (G);
\end{tikzpicture}
\end{center}
The target Green functor has been made constant on the $2$-torsion subgroup.
\end{ex}

\begin{ex} Consider again $G=\Sigma_3$, but with $m=3$.
We again use $\rho$ to denote a 3-cycle and $\tau$ to denote (any choice of) transposition. We continue to abuse notation by writing $C_3$ and $C_2$ for the subgroups generated by $\rho$ and $\tau$

Then by \cref{Jpdiv}, in order for the power operation to be a map of Mackey functors, we must quotient $Cl(\Sigma_3)$ by the values on the conjugacy classes of $\rho$ and $\rho^2$, and we must also quotient $Cl(C_3)$  by the (collapsed) conjugacy class of $\rho$. 
The resulting power operation of Green functors
\[ P_3/\ul{J} \, \colon \, \ul{Cl}_{\Sigma_3} \powerarr \MackUpGroupPower{Cl}{\Sigma_3}{3}/\ul{J}\]
is given by

\begin{center}
\begin{tikzpicture}
\node (GL) at (0,5) {$\C\{e,\rho,\tau\}$};
\node (C3L) at (-2,3) {$\C\{e,\rho,\rho^2\}$};
\node (C2L) at (2,2) {$\C\{e,\tau\}$};
\node (eL) at (0,0) {$\C$};

\draw[bend right=20,->] (C2L) to node[pos=0.4,left] {\mylittlematrix{1 & 0}} (eL);
\draw[bend right=20,color=\inductioncolor,->] (eL) to node[pos=0.6,right] {\mylittlematrix{2 \\ 0}} (C2L);
\draw[bend right=20,->] (C3L) to node[left] {\mylittlematrix{1 & 0 & 0 }} (eL);
\draw[bend right=20,color=\inductioncolor,->] (eL) to node[right]{\mylittlematrix{3 \\ 0 \\ 0 }} (C3L);
\draw[bend right=20,->] (GL) to node[left,pos=0.7]{\mylittlematrix{1 & 0 & 0 \\ 0 & 0 & 1  }} (C2L);
\draw[bend right=20,color=\inductioncolor,->] (C2L) to node[right,pos=0.6,xshift={1pt}]{\mylittlematrix{3  & 0 \\ 0 & 0 \\ 0 & 1  }} (GL);
\draw[bend right=20,->] (GL) to node[left,pos=0.2,xshift={-1ex}]{\mylittlematrix{1 & 0 & 0 \\ 0 & 1 & 0 \\ 0 & 1 & 0 }} (C3L);
\draw[bend right=20,color=\inductioncolor,->] (C3L) to node[right,pos=0.2,xshift={1ex}] {\mylittlematrix{2 & 0 & 0 \\ 0 & 1 & 1 \\ 0 & 0 & 0}}(GL);

\node[ right = 60mm of GL] (G) at (0,5) {$\C\{e,\tau\}$};
\node[ right = 60mm of C3L] (C3) at (-2,3) {$\C$};
\node[ right = 60mm of C2L] (C2) at (2,2) {$\C\{e,\tau\}$};
\node[ right = 60mm of eL] (e) at (0,0) {$\C$};

\draw[bend right=20,->] (C3) to node[pos=0.5,left,font=\tiny,xshift={-1pt}] {1} (e);
\draw[bend right=20,color=\inductioncolor,->] (e) to node[pos=0.6,right,font=\tiny] {3} (C3);
\draw[bend right=20,->] (C2) to node[left] {\mylittlematrix{1 & 0  }} (e);
\draw[bend right=20,color=\inductioncolor,->] (e) to node[right,xshift={1ex}]{\mylittlematrix{2  \\ 0 }} (C2);
\draw[bend right=20,->] (G) to node[left,pos=0.5]{\mylittlematrix{1 & 0  }} (C3);
\draw[bend right=20,color=\inductioncolor,->] (C3) to node[right,pos=0.4,xshift={1ex}]{\mylittlematrix{2 \\ 0  }} (G);
\draw[bend right=20,->] (G) to node[left,pos=0.6,xshift={-1ex}]{\mylittlematrix{1 & 0 \\ 0 & 1}} (C2);
\draw[bend right=20,color=\inductioncolor,->] (C2) to node[right,pos=0.4,xshift={1ex}] {\mylittlematrix{3 & 0 \\ 0 & 1}}(G);

\draw[->,color=\powercolor] (eL) to node[above,font=\tiny] {1} (e);
\draw[->,bend left=10,color=\powercolor] (C2L) to node[below,pos=0.2] {\mylittlematrix{1 & 0 \\ 0 & 1 }}  (C2);
\draw[->,bend left=8,color=\powercolor] (C3L) to node[below,pos=0.8] {\mylittlematrix{1 & 0 & 0  }} (C3);
\draw[->,color=\powercolor] (GL) to node[above] {\mylittlematrix{1 & 0 & 0 \\ 0 & 1 & 0 }} (G);
\end{tikzpicture}
\end{center}
The target Mackey functor has been made constant on the $3$-torsion subgroup.
\end{ex}

\subsubsection{Height $2$}
\label{sec:heighttwo}
We now turn our attention to height $2$. Let $E$ be height $2$ Morava $E$-theory at the prime $p$, so that $E^0 \iso \mathbb{W}(k)[[u_1]]$, where $k$ is a perfect field of characteristic $p$. Hopkins, Kuhn, and Ravenel \cite{hkr} introduced a rational $E^0$-algebra $C_0$ and produced an isomorphism of $C_0$-algebras
\[ C_0 \otimes_{E^0} E^0(BG) \iso Cl_{2,p}(G,C_0),\]
where $Cl_{2,p}(G)=Cl_{2,p}(G,C_0)$ denotes the character ring of $C_0$-valued functions on conjugacy classes of commuting pairs of $p$-power order elements of $G$.
This extends to a Green functor $\ul{Cl_{2,p}}_G$, where the restriction and induction maps are similar to those described in \cref{sec:heightone}. See also \cite[Theorem~D]{hkr}.

A description of the power operation 
\[ P_m \colon Cl_{2,p}(G) \powerarr Cl_{2,p}(G\times \Sigma_m)\]
can be found in the introduction to \cite{StapleHbk}, which is a specialization of the main result of \cite{cottpo}.
One important point is that the power operation on class functions depends on a {\it choice} of ordered set of generators for sublattices of $\Z_p^{\times 2}$.

\begin{ex}
Let $G=\Z/2$, $p=2$, and $m=2$. We are to consider
\[ \begin{tikzcd}
Cl_{2,p}(\Z/2) \arrow[r,"P_2",color=\powercolor] \restrict{d} & 
Cl_{2,p}(\Z/2\times \Sigma_2) \restrict{d} \\
Cl_{2,p}(e) \arrow[r,"P_2",color=\powercolor] \induct{u} & 
Cl_{2,p}(\Sigma_2) . \induct{u}
\end{tikzcd} \]
A class function $f\in Cl_{2,p}(\Sigma_2)$ can be displayed as a table of values
\[ \begin{array}{|cccc}
(e,e) & (e,\sigma) & (\sigma,e) & (\sigma,\sigma) \\ \hline
a & b & c & d, 
\end{array}\]
where $a,b,c,d \in C_0$. For simplicity, we will assume that $a,b,c,d \in \Z \subset C_0$ so that certain ring-automorphisms of $C_0$ that appear in the general formula in \cite{StapleHbk} do not appear here. Following \cite{StapleHbk}, the power operation $Cl_{2,p}(e) \xrtarr{P_2} Cl_{2,p}(\Sigma_2)$ is given by
\[ a \mapsto 
 \begin{array}{|cccc}
(e,e) & (e,\sigma) & (\sigma,e) & (\sigma,\sigma) \\ \hline
a^2 & a & a & a. 
\end{array}\]
Collapsing the transfer from the subgroup $\Sigma_1\times \Sigma_1$ of $\Sigma_2$ will eliminate the (nonlinear) value at $(e,e)$.

We will similarly describe a class function $f\in Cl_{2,p}(\Z/2\times \Sigma_2)$ via a table of values
\[ 
\begin{array}{c|cccc}
 & (e,e) & (e,\sigma) & (\sigma,e) & (\sigma,\sigma) \\ \hline
(0,0) & a & b & c & d \\
(0,1) & e & f & g & h \\
(1,0) & i & j & k & l \\
(1,1) & m & n & o & p.
\end{array}
\]
Then a {\it choice} of power operation $P_2 \colon  Cl_{2,p}(\Z/2) \powerarr Cl_{2,p}(\Z/2\times \Sigma_2)$, compatible with the second power operation on height $2$ Morava $E$-theory at the prime $2$, is 
\[
 \begin{array}{|cccc}
(0,0) & (0,1) & (1,0) & (1,1) \\ \hline
a & b & c & d 
\end{array}
\mapsto 
\begin{array}{c|cccc}
 & (e,e) & (e,\sigma) & (\sigma,e) & (\sigma,\sigma) \\ \hline
(0,0) & a^2 & a & a & a \\
(0,1) & b^2 & a & \choicecolor{b} &  \choicecolor{b}\\
(1,0) & c^2 & \choicecolor{c} & a &  \choicecolor{b} \\
(1,1) & d^2 & \choicecolor{c} & \choicecolor{b} & a
\end{array},
\]
where all of the values that depend on a choice are displayed in color.
Collapsing the transfer from the subgroup $\Z/2\times \Sigma_1\times \Sigma_1$ of $\Z/2\times \Sigma_2$ will eliminate the first column of values, while collapsing the transfer from the diagonal subgroup of $\Z/2\times \Sigma_2$ will eliminate the $a$'s on the (slope negative one) diagonal.

Then the diagrams of restriction maps and power operations
\begin{center}
\begin{tikzpicture}[yscale=0.75]
\node (e) at (0,0) {$a$};
\node (C2) at (0,4) {$ \begin{array}{|cccc}
(0,0) & (0,1) & (1,0) & (1,1) \\ \hline
a & b & c & d 
\end{array}
$};
\node (S2) at (7,0) {$ \begin{array}{|cccc}
(e,\sigma) & (\sigma,e) & (\sigma,\sigma) \\ \hline
 a & a & a 
\end{array}$,};
\node (C2S2) at (7,4) {$
\begin{array}{c|cccc}
  & (e,\sigma) & (\sigma,e) & (\sigma,\sigma) \\ \hline
(0,0)  & a & a & a \\
(0,1) &  & \choicecolor{b} &  \choicecolor{b}\\
(1,0) & \choicecolor{c} &  &  \choicecolor{b} \\
(1,1)  & \choicecolor{c} & \choicecolor{b} & 
\end{array}
$};

\draw[|->] (C2) to  (e);
\draw[|->,color=\powercolor] (e) to  (S2);
\draw[|->,color=\powercolor] (C2) to  (C2S2);
\draw[|->] (C2S2) to  (S2);
\end{tikzpicture}
\end{center}
and the diagram of induction maps and power operations 
\begin{center}
\begin{tikzpicture}[yscale=0.75]
\node (e) at (0,0) {$a$};
\node (C2) at (0,4) {$ \begin{array}{|cccc}
(0,0) & (0,1) & (1,0) & (1,1) \\ \hline
2a & 0 & 0 & 0 
\end{array}
$};
\node (S2) at (7,0) {$ \begin{array}{|cccc}
(e,\sigma) & (\sigma,e) & (\sigma,\sigma) \\ \hline
 a & a & a 
\end{array}$,};
\node (C2S2) at (7,4) {$
\begin{array}{c|cccc}
  & (e,\sigma) & (\sigma,e) & (\sigma,\sigma) \\ \hline
(0,0)  & 2a & 2a & 2a \\
(0,1) &  & 0 & 0 \\
(1,0) & 0 &  & 0\\
(1,1)  & 0 & 0 & 
\end{array}
$};

\draw[|->,color=\inductioncolor] (e) to  (C2);
\draw[|->,color=\powercolor] (e) to  (S2);
\draw[|->,color=\powercolor] (C2) to  (C2S2);
\draw[|->,color=\inductioncolor] (S2) to  (C2S2);
\end{tikzpicture}
\end{center}
both commute.
\end{ex}

\bibliographystyle{amsalpha}
\bibliography{bibliography}

\end{document}